\providecommand{\U}[1]{\protect\rule{.1in}{.1in}}
\providecommand{\U}[1]{\protect\rule{.1in}{.1in}}
\newtheorem{theorem}{Theorem}[section]
\newtheorem{corollary}[theorem]{Corollary}
\newtheorem{example}[theorem]{Example}
\newtheorem{lemma}[theorem]{Lemma}
\newenvironment{proof}[1][Proof]{\textbf{#1.} }{\ \rule{0.5em}{0.5em}\newline}
\newcommand{\Z}{{\mathbb{Z}}}
\newcommand{\Var}{{\rm Var}}
\newcommand{\N}{{\mathbb N}}
\newcommand{\E}{{\mathbb{E}}}
\renewcommand{\P}{{\mathbb{P}}}
\newcommand{\bb}{\begin{eqnarray*}}
	\newcommand{\ee}{\end{eqnarray*}}
\newcommand{\bbb}{\begin{eqnarray}}
	\newcommand{\eee}{\end{eqnarray}}
\def\build#1_#2^#3{\mathrel{\mathop{\kern 0pt#1}\limits_{#2}^{#3}}}
\newcommand{\converge}[3]{\build\hbox to
	15mm{\rightarrowfill}_{#1\rightarrow #2}^{\hbox{\scriptsize #3}}}
\newcommand{\convergedef}[1]{\overset{{\hbox{\scriptsize #1}}}{\hbox to 15mm{\rightarrowfill}}}
\begin{document}
	
	\title{On the Quenched Functional Central Limit Theorem for Stationary Random Fields under Projective Criteria}
	\author{{Lucas Reding} and Na Zhang}
	\date{\vspace{-3em}}
	\maketitle
	
	Univ. Polytechnique Hauts-de-France, INSA Hauts-de-France, CERAMATHS, F - 59313 Valenciennes, France
	
	Email: lucas.reding@uphf.fr
	
	Department of Mathematics, Towson University, Towson, MD 21252-0001,USA.
	
	Email: nzhang@towson.edu
	
	\begin{abstract}
		In this work, we study and establish some quenched functional Central Limit Theorems (CLTs) for stationary random fields under a projective criteria. These results are functional generalizations of the theorems obtained by \cite{MR4166203} and of the quenched functional CLTs for ortho-martingales established by \cite{MR4125956} to random fields satisfying a Hannan type projective condition. In the work of \cite{MR4166203}, the authors have already proven a quenched functional CLT, however the assumptions were not optimal as they required the existence of a $2+\delta$-moment. In this article, we establish the results under weaker assumptions, namely we only require an Orlicz space condition to hold. The methods used to obtain these generalizations are somewhat similar to the ones used by \cite{MR4166203} but we improve on them in order to obtain results within the functional framework. Moreover, a Rosenthal type inequality for said Orlicz space is also derived and used to obtain a sufficient condition analogous to that of Theorem 4.4 in the work of \cite{MR4166203}. Finally, we apply our new results to derive some quenched functional CLTs under weak assumptions for a variety of stochastic processes.
		
	\end{abstract}
	
	Key words: random fields, central limit theorem, quenched central limit theorem, functional central limit theorem, quenched functional central limit theorem, ortho-martingale approximation, projective condition.
	\\
	Mathematics Subject Classification (2020): 60G60, 60F05, 60F17, 60G42, 60G48, 41A30.
	
	\section{Introduction}
	Developments within the Markovian theory led to the question of the conditions under which a central limit theorem could be derived for Markov chains; in particular what restrictions were sufficient on the initial distribution and the transition operator to have this kind of result. Seminal results were obtained by \cite{MR0501277} (see also \citealp{MR1368394, MR1826405}) for Markov chains endowed with the stationary measure as their initial distribution as well as \cite{MR0834478} (see also \citealp{MR1826405}) for additive functionals of reversible Markov chains. Additionally, \cite{MR1826405} also obtained a CLT for Markov chains starting from a fixed point (in other words, endowed with $\delta _x$, the Dirac measure at the state $x$, as their initial distribution). Such theorems are called quenched CLTs. Another way of expressing these results is to consider a fixed past and to study the convergence in distribution with respect to that past. The difficulties during the proof arise from the fact that this fixed past causes the process to not be stationary anymore. An extensive literature exists on the subject, one can cite the following works by \cite{MR3483741, MR2886384, MR3178473, MR3083921, MR3224292, MR3409830, MR3157895}. Note that some counterexamples to quenched central limit theorems under specific conditions were found by \cite{MR2425365} and \cite{MR2731055}. Functional versions of these quenched central limit theorems, also called quenched weak invariance principles, have also been the subject of numerous research articles such as the ones by \cite{MR3483741, MR3178473, MR3083921, MR3409830}.
	\\
	\\
	Random fields naturally appear as a generalization of sequences of random variables, however extending the one-dimensional results to greater dimensions is much harder than one would think. The first problem we are faced with is to correctly define the notion of past trajectory. The approach we have implemented in this paper is to use the notion of commuting filtrations. In particular, this property is satisfied by filtrations generated by fields of independent random variables or even by fields with independent columns (or, equivalently, independent rows). As a lot of processes can be expressed as a functional of i.i.d. random variables, these types of filtrations are quite common and merit interest. A lot of work has been done under commuting filtrations (see \citealp{MR3427925,MR3504508}).
	\\
	\\
	As usual, we will require some kind of dependency condition on the studied field. Namely, in this paper, we will use Hannan's projective condition as defined by \cite{MR0331683}. The problem we are interested in has been studied by \cite{MR3083921} for time series but it has yet to be investigated for higher dimensions, which is the purpose of this article. Though the problem we focus on hasn't been studied yet, one can note that fields satisfying Hannan's condition have been quite extensively studied and numerous CLTs and functional CLTs, both in the annealed and quenched sense, have been obtained. One could refer to the following works: \cite{MR3264437, MR3483738, MR4166203}.
	\\
	\\ 
	The proofs for the main theorems in this paper are based upon the use of a martingale-coboundary decomposition that can be found in \cite{MR3264437} (some more recent and general results can be found in \citealp{MR3522451, MR3735411, MR3869881}, see also \citealp{MR2749126}) as well as the central limit theorem and the weak invariance principle established by \cite{MR4125956} for ortho-martingales. Once the main theorems are established, we derive corollaries in the spirit of the results obtained by \cite{MR4166203}. As shown by the previous results in the literature, it will be required to address two situations separately: first when the summations are done over cubic regions of $\mathbb{Z}^d$ and, after that, when the regions are only required to be rectangular.
	\\
	\\
	In the previous work of \cite{MR4166203}, the Rosenthal inequality for Lebesgue spaces (see \citealp{MR0624435}, Theorem 2.11, p.23) was used to derive a sufficient condition for the quenched CLT and its functional form to hold. In order to obtain an analogous result within our framework, we will make use of a Rosenthal type inequality for this Orlicz space. Given that no such result seems to exist in the literature, we will follow the outline of the proof given by \cite{MR0365692} and adapt it to our framework in order to establish the required inequality.
	\\
	\\
	This paper will be structured as follows: in Section \ref{sec:framework}, we introduce the notations used throughout our article and we present the main results obtained in this work. In particular, we will split the results into two categories: the first one will aggregate theorems dealing with summations over cubic regions only while the other category will deal with results concerning more general rectangular regions. The proofs of these theorems will appear in Section \ref{sec:proofs} and we will improve on the two applications studied by \cite{MR4166203} as well as provide some additional examples in Section \ref{sec:examples}. These examples include linear and Volterra random fields as well as Hölder continuous functions of linear fields, which are a common occurrence in the field of financial mathematics and economics, and also weakly dependent random fields in the sense of \cite{Wu2005} which hold a significant role in mathematical physics and, in particular, within the study of particle systems. Finally, in Section \ref{sec:appendix}, we give the proof of the Rosenthal type inequality for the Orlicz space mentioned throughout this paper. 
	
	\section{Framework and results}
	\label{sec:framework}
	In all that follows, we consider a probability space $(\Omega, \mathcal{F}, \P)$ and all the random variables considered thereafter will be real-valued and defined on that probability space. We start by introducing multiple items of notation that will be used throughout this article: $d$ will be an integer greater than $1$, $[x]$ will denote the integer part of a real number $x$, bold characters will designate multi-indexes and in particular we shall write $\bm{0}:= (0, \ldots, 0) \in \Z^d$ as well as $\bm{1}:= (1, \ldots, 1) \in \Z^d$. For any $\bm{n} \in \Z^d$, we denote $\bm{n} := (n_1, \ldots,n_d)$ and $|\bm{n}| := \prod_{i=1}^{d}n_i$. The set of all positive integers will be denoted by $\mathbb{N}^*$ and the set of integers $\{1,\ldots, d\}$  will be denoted by $\llbracket1,d \rrbracket$. In order to define the concept of past trajectory, it is necessary to define an order on $\Z^d$: if $\bm{u},\bm{v} \in \Z
	^d$ are multi-indexes such that for all $ k \in \llbracket1,d\rrbracket, u_k \le v_k$, then we will write $\bm{u} \le \bm{v}$.
	\\
	\\
	Convergence of fields indexed by $\Z^d$ will be interpreted in the following sense. If $\bm{n} =(n_1, \ldots, n_d)$ is a multi-index, then the notation $\bm{n} \to \infty$ is to be interpreted as the convergence of $\min \{n_1, \ldots, n_d\}$ to $\infty$. Convergence in distribution (resp. almost surely) will be denoted by $\convergedef{\textrm{$\mathcal{D}$}}$ (resp. $\convergedef{\normalfont{a.s.}}$).
	\\
	\\
	Before introducing the field we are interested in, we define some transformations on $\Omega$. We let $T_i: \Omega \to \Omega$, $i\in\{1, \dots, d\}$ be invertible measure-preserving commuting transforms on the probability space $(\Omega, \mathcal{F}, \P)$ and we make use of the operator notation (i.e. if $U$ and $V$ are two transformations on $\Omega$, we denote $UV:= U\circ V$).
	\\
	\\
	We consider a sigma-field $\mathcal{F}_{\bm{0}} \subset \mathcal{F}$ such that $\mathcal{F}_{\bm{0}} \subset T^{-\bm{i}}\mathcal{F}_{\bm{0}}$ for all $\bm{i} \in \Z^d$, and a random variable $X_{\bm{0}} \in L^0_2$ where $L^0_2 = L^0_2(\Omega, \mathcal{F}_{\bm{0}}, \P)$ is the set of all $\mathcal{F}_{\bm{0}}$-measurable, square integrable, and centered random variables.
	
	For every $\bm{n} = (n_1, \ldots, n_d) \in \mathbb{Z}^d$, set 
	\begin{equation}\label{Xdef}
		X_{\bm{n}} = X_{\bm{0}} \circ T^{\bm{n}},
	\end{equation}
	and 
	\begin{equation}\label{Fdef}
		\mathcal{F}_{\bm{n}} = T^{-\bm{n}}\mathcal{F}_{\bm{0}},
	\end{equation}
	where $T^{\bm{n}} = T_1^{n_1}\cdots T_d^{n_d}$. As a result, $X_{\bm{n}}$ is $\mathcal{F}_{\bm{n}}$-measurable.
	\\
	\\
	Suppose that the family $(\mathcal{F}_{\bm{k}})_{\bm{k}\in \Z^d}$ is a commuting filtration, that is, for every integrable random variable $X$, we have
	\[
	\E_{\bm{i}}\bigl[\E_{\bm{j}}[X]\bigr] = \E_{\bm{i}\wedge \bm{j}}[X],
	\]
	where $\E_{\bm{i}}[X] = \E[X| \mathcal{F}_{\bm{i}}]$ and $\bm{i}\wedge \bm{j}$ is the coordinate-wise minimum between $\bm{i}$ and $\bm{j}$.
	\\
	\\
	We recall the notion of ortho-martingale which was introduced by \cite{MR254912} (see also \citealp{MR1914748}). We say that a random field $(D_{\bm{i}})_{\bm{i} \in \Z^d}$ is an ortho-martingale difference field if each $D_{\bm{n}}$ is in $L^1(\mathcal{F}_{\bm{n}})$ and satisfies the equation $\E_{\bm{a}}[D_{\bm{n}}]=0$ as long as there exists
	$k \in \llbracket 1,d \rrbracket$ such that $a_k<n_k$. Then, if $M_{\bm{n}}:=\sum\nolimits_{\bm{0} \le \bm{u} \le \bm{n}}D_{\bm{u}}$, the random field $(M_{\bm{n}})_{\bm{n} \in \N^d}$ will be called an ortho-martingale.
	\\
	\\
	Suppose also that the random variable $X_{\bm{0}}$ is regular with respect to the filtration $\mathcal{F}$, that is $\E[X_{\bm{0}}| \mathcal{F}_{-\infty\bm{e}_i}] = 0$ for every $i \in \{1,\cdots,d\}$, where $\bm{e}_i$ is the multi-index whose $i$-th coordinate is equal to $1$ and the others are equal to $0$ with the convention that $\infty \times 0 = 0$.
	\\
	\\
	We consider the projection operators defined, for any $\bm{n} \in \Z^d$, by	$\mathcal{P}_{\bm{n}} = \prod_{i=1}^{d}{(\E_{\bm{n}} - \E_{\bm{n}-\bm{e}_i})}$, and for every $\omega \in \Omega$, we denote by $\P^{\omega}$ a regular version of the conditional probability given $\mathcal{F}_{\bm{0}}$, that is,	$\P^{\omega} = \P(\cdot|\mathcal{F}_{\bm{0}})(\omega)$.
	\\
	\\
	Finally, we introduce the sum that we will be studying, for every $\bm{n} \in (\mathbb{N}^*)^d$,
	\[
	S_{\bm{{n}}} = \sum_{\bm{i} = \bm{1}}^{\bm{n}}X_{\bm{i}} := \sum_{\bm{1} \le \bm{i} \le \bm{n}}X_{\bm{i}},	
	\]
	and we also set 
	\[\bar{S}_{\bm{n}} = S_{\bm{{n}}} - R_{\bm{{n}}} \textrm{\quad with \quad } R_{\bm{{n}}} = \sum_{i=1}^{d}(-1)^{i-1}\sum_{1 \le j_1 < \cdots < j_i \le d}\E_{\bm{n}^{(j_1,\cdots,j_i)}}[S_{\bm{n}}],\]
	where $\bm{n}^{(j_1,\cdots,j_d)}$  is the multi-index obtained by replacing  with $0$ all the $j_1,\cdots,j_i$-th coordinates of the multi-index $\bm{n}$ and leaving the rest unchanged.
	\\
	\\
	In dimension $d=1$, this reduces to the following expression:
	\[
	\bar{S}_{n} = S_n - \mathbb{E}[S_n|\mathcal{F}_0], \quad \textrm{for } n \in \mathbb{N}^*.
	\]
	This case was investigated by \cite{MR3083921} and therefore, we will always consider $d > 1$ in the rest of the paper. In dimension $d = 2$, the definition of $\bar{S}_{\bm{n}}$ reduces down to
	\[
	\bar{S}_{n,m} = S_{n,m} - \mathbb{E}[S_{n,m}|\mathcal{F}_{n,0}] - \mathbb{E}[S_{n,m}|\mathcal{F}_{0,m}] + \mathbb{E}[S_{n,m}|\mathcal{F}_{0,0}], \quad \textrm{for } (n,m) \in (\mathbb{N}^*)^2.
	\]
	\subsection{Functional CLT over cubic regions}
	Here we present the quenched functional CLT over cubic regions of $\mathbb{Z}^d$. These results expand Theorem 4.1, the second part of Corollary 4.3, and Theorem 4.4 (a) obtained by \cite{MR4166203} to the functional framework. It is also possible to view these results as an extension to higher dimensions of Theorem 1 established by \cite{MR3083921}. As noted by \cite{MR4166203}, the proofs of these theorems essentially reduce down to particular cases of the proofs of the functional central limit theorems over rectangular regions of $\mathbb{Z}^d$. The differences in the proofs between the two frameworks will be specified in greater detail in Section \ref{sec:proofs}. 
	\begin{theorem}\label{main result square}
		Assume that $(X_{\bm{n}})_{\bm{n} \in \Z^d}$ is defined by \eqref{Xdef} and that the filtration $(\mathcal{F}_{\bm{n}})_{\bm{n} \in \Z^d}$ given by \eqref{Fdef} is commuting. Also, assume that one of the transformations $T_i, 1 \le i \le d,$ is ergodic and that
		\begin{equation}
			\sum_{\bm{u}\geq \bm{0}}{\|\mathcal{P}_{\bm{0}}(X_{\bm{u}})\|}_{2}<\infty. \label{hannan_2}%
		\end{equation}
		Then, for $\P$-almost all $\omega \in \Omega$,
		\[
		\biggl(\frac{1}{n^{d/2}}\bar{S}_{[ n\bm{t}]}\biggr)_{\bm{t} \in [0,1]^d}  \converge{n}{\infty}{\textrm{$\mathcal{D}$}} (\sigma W_{\bm{t}})_{\bm{t} \in [0,1]^d} \quad \textrm{under }\quad \mathbb{P}^{\omega},
		\]
		where $\sigma ^2 := {\mathbb{E}[D_{\bm{0}}^2]}$ {with $D_{\bm{0}} = \sum_{\bm{i} \ge \bm{0}}\mathcal{P}_{\bm{0}}(X_{\bm{i}})$}, $(W_{\bm{t}})_{\bm{t} \in [0,1]^d}$ is a standard Brownian sheet, $[k\bm{t}] := ([kt_1], \cdots, [kt_d])$ for $k \in \mathbb{Z}$ and the convergence happens in the Skorokhod space $D([0,1]^d)$ endowed with the uniform topology. Moreover, $\sigma ^2 = \lim\limits_{n \to \infty}{\frac{\E[\bar{S}_{n, \ldots,n}^2]}{n^d}}$.
	\end{theorem}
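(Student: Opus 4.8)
The plan is to reduce the statement to the quenched weak invariance principle for ortho-martingales of Peligrad and Voln\'y \cite{PV}, using the ortho-martingale approximation that Hannan's condition (\ref{hannan_2}) provides. First I would build the approximating ortho-martingale. Setting $D_{\bm{0}}=\sum_{\bm{i}\ge\bm{0}}\mathcal{P}_{\bm{0}}(X_{\bm{i}})$, the defining series converges in $L^2$ precisely because $\sum_{\bm{u}\ge\bm{0}}\norm{\mathcal{P}_{\bm{0}}(X_{\bm{u}})}_2<\infty$, and $D_{\bm{n}}:=D_{\bm{0}}\circ T^{\bm{n}}$ is an ortho-martingale difference field: each factor $\E_{\bm{0}}-\E_{\bm{0}-\bm{e}_i}$ of the tensorised projection $\mathcal{P}_{\bm{0}}$ annihilates the relevant conditional expectations, so that $\E_{\bm{a}}[D_{\bm{n}}]=0$ whenever $a_k<n_k$ for some $k$. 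With $M_{\bm{n}}=\sum_{\bm{1}\le\bm{i}\le\bm{n}}D_{\bm{i}}$ the associated ortho-martingale, the Peligrad--Voln\'y result yields, for $\P$-almost every $\omega$,
\[
\Bigl(n^{-d/2}M_{\lfloor n\bm{t}\rfloor}\Bigr)_{\bm{t}\in[0,1]^d}\converge{n}{\infty}{\textrm{$\mathcal{D}$}}(\sigma W_{\bm{t}})_{\bm{t}\in[0,1]^d}\quad\text{under }\P^\omega,
\]
with $\sigma^2=\E[D_{\bm{0}}^2]$; here the ergodicity of one of the $T_i$ is what forces the quenched limiting variance to be the deterministic constant $\sigma^2$ rather than a random multiple, hence a genuine Brownian sheet in the limit.

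It then remains to show that the discrepancy between $\bar{S}_{\bm{n}}$ and $M_{\bm{n}}$ is asymptotically negligible after normalisation, uniformly in the index and in the quenched sense. To this end I would invoke a martingale--coboundary decomposition in the spirit of Voln\'y and Wang \cite{VWa1}, writing $\bar{S}_{\bm{n}}=M_{\bm{n}}+R_{\bm{n}}$, where $R_{\bm{n}}$ is a sum over the subsets of coordinate directions of coboundary terms, each obtained by telescoping a stationary $L^2$ field built from the convergent tails $\sum\mathcal{P}_{\cdot}(X_{\bm{i}})$ along the corresponding directions. The target estimate is
\[
\frac{1}{n^{d/2}}\max_{\bm{1}\le\bm{k}\le(n,\dots,n)}\bigl|R_{\bm{k}}\bigr|\;\converge{n}{\infty}{}\;0\qquad\text{in }\P^\omega\text{-probability, for }\P\text{-a.e. }\omega .
\]

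To establish this quenched negligibility I would first prove the annealed maximal bound $\E\bigl[\max_{\bm{k}\le(n,\dots,n)}|R_{\bm{k}}|^2\bigr]=o(n^d)$, combining a $d$-dimensional Cairoli/Doob maximal inequality for the commuting filtration with the $L^2$-convergence of the tail series, so that the contribution of each coboundary vanishes after dividing by $n^d$. The delicate passage is from this $L^2(\P)$ bound to convergence under $\P^\omega$ for $\P$-almost every $\omega$: I would run the two-step scheme now standard for quenched limit theorems. The annealed bound shows that $\P\bigl(n^{-d/2}\max_{\bm{k}\le(n,\dots,n)}|R_{\bm{k}}|>\ep\bigr)\to0$, so the conditional probabilities $\P^\omega(\cdot)$ converge to $0$ in $L^1(\P)$ and hence $\P$-almost surely along a subsequence; one then fills the gaps $2^\ell\le n<2^{\ell+1}$ by exploiting the monotonicity of the maximal function in the index, bounding the intermediate maxima by their values at the next dyadic level to promote the convergence to the full sequence. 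I expect the genuine obstacle to be exactly this gap-filling in dimension $d>1$: the index set is a $d$-fold product, the maxima must be taken simultaneously over a full multidimensional grid rather than along a single direction, and one must ensure the maximal functions of all the coboundary pieces are controlled together. This is precisely where the commuting-filtration hypothesis and the tensor structure of $\mathcal{P}_{\bm{0}}$ are essential, since they allow the iterated one-dimensional martingale estimates to be composed coordinate by coordinate. Once the remainder is quenched-negligible, a Slutsky argument under $\P^\omega$ in $D([0,1]^d)$ with the uniform topology transfers the invariance principle from $M$ to $\bar{S}$. Finally, the identity $\sigma^2=\lim_n n^{-d}\E[\bar{S}_{n,\dots,n}^2]$ follows from the orthogonality of the ortho-martingale differences, which gives $\E[M_{n,\dots,n}^2]=n^d\,\E[D_{\bm{0}}^2]$, together with the $L^2$-negligibility of $R_{n,\dots,n}$.
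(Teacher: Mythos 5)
Your overall skeleton --- build $D_{\bm{0}}=\sum_{\bm{i}\ge\bm{0}}\mathcal{P}_{\bm{0}}(X_{\bm{i}})$, get the quenched FCLT for the ortho-martingale $M_{\bm{n}}=\sum_{\bm{1}\le\bm{i}\le\bm{n}}D_{\bm{i}}$ from Peligrad--Voln\'y, and transfer to $\bar{S}$ by showing the difference is quenched-negligible --- is the same as the paper's. But two of your intermediate steps have genuine gaps. The first is the decomposition $\bar{S}_{\bm{n}}=M_{\bm{n}}+R_{\bm{n}}$ with $R_{\bm{n}}$ a sum of coboundaries of stationary $L^2$ fields ``built from the convergent tails'': this is not available under Hannan's condition \eqref{hannan_2} alone. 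Already for $d=1$, $\sum_{u\ge0}\norm{\mathcal{P}_0(X_u)}_2<\infty$ does not make $\sum_{k\ge1}\norm{\sum_{u\ge k}\mathcal{P}_0(X_u)}_2$ finite, so the transfer function of the would-be coboundary need not exist in $L^2$. This is precisely why the paper first truncates, setting $X_{\bm{0}}^{(n)}=\sum_{\bm{j}\in\llbracket-n,0\rrbracket^d}\mathcal{P}_{\bm{j}}(X_{\bm{0}})$, applies the Voln\'y--Wang martingale--coboundary decomposition only to this finite-range field (where it exists trivially), and handles the tail $X_{\bm{0}}-X_{\bm{0}}^{(n)}$ by an entirely different device.

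The second gap is the decisive one: your passage from the annealed bound $\E\bigl[\max_{\bm{k}}|R_{\bm{k}}|^2\bigr]=o(n^d)$ to the quenched statement fails. $L^1(\P)$-convergence of $\P^{\omega}(A_n)$ to $0$ gives almost sure convergence only along \emph{some} subsequence which you cannot prescribe in advance; your dyadic gap-filling needs almost sure convergence along the dyadics $n=2^{\ell}$ themselves (for each $\ep$ in a countable set), and that would require a summable rate to apply Borel--Cantelli, which $o(n^d)$ does not provide. If instead you fill gaps between consecutive terms of the extracted subsequence, you need its ratios $n_{k+1}/n_k$ to stay bounded, which again cannot be guaranteed. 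This obstruction is not a technicality: annealed negligibility of remainders never self-improves to quenched negligibility (that is the lesson of the Voln\'y--Woodroofe counterexample and the reason quenched theorems are hard). The paper's substitute is its main lemma (in its $L^2$ version for the cube case): for an $\mathcal{F}_{\bm{0}}$-measurable $h$ satisfying the Hannan condition one has, almost surely and \emph{for all} $N$ simultaneously, $\sqrt{\E_{\bm{0}}\bigl[\max_{1\le n\le N}n^{-d}|\bar{S}_{n\bm{1}}(h)|^2\bigr]}\le 2^d\sum_{\bm{u}\ge\bm{0}}\sqrt{\bigl(|\mathcal{P}_{\bm{0}}(h_{\bm{u}})|^2\bigr)^{\star}}$, where the right-hand side is a single integrable random variable; integrability comes from a dominated-ergodic (maximal) inequality of Krengel for the maximal operator over cubes, and its $L^1$ norm is controlled by $\sum_{\bm{u}}\norm{\mathcal{P}_{\bm{0}}(h_{\bm{u}})}_2$. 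Applied to $h=X_{\bm{0}}-X_{\bm{0}}^{(n)}$, this majorant tends to $0$ in $L^1$ as the truncation level $n\to\infty$, so a subsequence is extracted in the truncation parameter only --- where no gap-filling is needed --- while the bound is uniform in the time index $N$. Without this quenched maximal inequality (or an equivalent), your argument cannot be completed; with it, the rest of your outline (Cairoli for the martingale comparison, the quenched ergodic theorem for the variance term, PV's theorem and the Neuhaus/Slutsky transfer in $D([0,1]^d)$) goes through as in the paper.
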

	In Theorem~\ref{main result square}, the random centering $R_{ [n\bm{t}]}$ cannot be avoided without additional hypotheses. As a matter of fact, for $d=1,$ \cite{MR2731055} constructed an example showing that the CLT for partial sums needs not be quenched. It should also be noticed that, for a stationary ortho-martingale, the existence of a finite second moment is not enough for the validity of a quenched CLT when the summation is taken over rectangles (see  \citealp{MR4125956}). That being said, the following corollary gives a sufficient condition to get rid of the stochastic centering $R_{\bm{n}}$ in the previous theorem. 
	\begin{corollary}\label{Cor_FCLT_d_square}
		Assume that the hypotheses of Theorem \ref{main result square} are satisfied and assume in addition that for every $i \in \{1, \ldots, d\}$, it holds
		\[
		\frac{1}{n^{d}}\mathbb{E}_{\bm{0}}\Bigl[\max _{\bm{1} \le \bm{m} \le n\bm{1}}\bigl(\mathbb{E}_{\bm{m}^{(i)}}[S_{\bm{m}}]\bigr)^2\Bigr] \converge{n}{\infty}{\normalfont{a.s.}}0
		\]
		where we recall that $\bm{m}^{(i)}$ is the multi-index obtained by replacing  with $0$ the $i$-th coordinate of the multi-index $\bm{m}$ and leaving the rest unchanged. Then, for almost all $\omega \in \Omega$,
		\begin{equation}\label{conv coro no centering square}
			\biggl(\frac{1}{n^{d/2}}S_{[ n\bm{t}]}\biggr)_{\bm{t} \in [0,1]^d}  \converge{n}{\infty}{\textrm{$\mathcal{D}$}} (\sigma W_{\bm{t}})_{\bm{t} \in [0,1]^d} \quad \textrm{under }\quad \mathbb{P}^{\omega},
		\end{equation}
		where $(W_{\bm{t}})_{\bm{t} \in [0,1]^d}$ is a standard Brownian sheet and the convergence happens in the Skorokhod space $D([0,1]^d)$ endowed with the uniform topology.
	\end{corollary}
	To end this section, we give a condition that is easier to verify but still guarantees that the convergence \eqref{conv coro no centering square} holds.
	\begin{corollary}\label{main result square no centering}
		Assume that $(X_{\bm{n}})_{\bm{n} \in \Z^d}$ is defined by \eqref{Xdef}, that $(\mathcal{F}_{\bm{n}})_{\bm{n} \in \Z^d}$ is given by \eqref{Fdef} and is a commuting filtration, and that one of the transformations $T_i, 1 \leq i \le d,$ is ergodic. If the following condition is satisfied:
		\begin{equation}
			\sum_{\bm{u}\geq \bm{1}}\frac{{\|\E_{\bm{1}}(X_{\bm{u}})\|}_{2}}{|\bm{u}|^{\frac{1}{2}}}<\infty.
			\label{higher moment 2}%
		\end{equation}
		Then, for almost all $\omega\in\Omega$, the conclusion of Corollary \ref{Cor_FCLT_d_square} holds.
	\end{corollary}
	Once again we note that this result is an extension of Corollary 2 in \cite{MR3083921} to random fields and an extension of Theorem 2.6 (a) found in \cite{MR4166203} to the functional framework. 
	\subsection{Functional CLT over rectangular regions}
	In order to obtain a functional CLT when we sum over rectangular regions, a stronger projective condition than \eqref{hannan_2} is necessary. Indeed, \cite{MR4125956} gave a counterexample to a quenched CLT over rectangles for some stationary ortho-martingale under condition \eqref{hannan_2}. This leads us to consider a projective condition in an Orlicz space associated with a specific Young function.
	\\
	\\
	Following the work of \cite{MR0126722}, we define the Luxemburg norm associated with the Young function $\Phi : [0,\infty) \to [0,\infty)$ as 
	\[
	{\|f\|}_{\Phi} = \inf \Bigr\{t > 0 : \mathbb{E}\bigl[\Phi(|f|/t)\bigr] \le 1\Bigr\}.
	\]
	In everything that follows, we will consider the Young function $\Phi_d : [0,\infty) \to [0,\infty)$ defined for every $x \in [0,\infty)$ by
	\begin{equation}\label{def_Phi}
		\Phi_d(x)=x^{2}(\log(1+x))^{d-1}.
	\end{equation}
	\begin{theorem}\label{main result}
		Assume that $(X_{\bm{n}})_{\bm{n} \in \Z^d}$ is defined by \eqref{Xdef} and that the filtration $(\mathcal{F}_{\bm{n}})_{\bm{n} \in \Z^d}$ given by \eqref{Fdef} is commuting. Also, assume that one of the transformations $T_i, 1 \le i \le d,$ is ergodic and that
		\begin{equation}
			\sum_{\bm{u}\geq \bm{0}}{\|\mathcal{P}_{\bm{0}}(X_{\bm{u}})\|}_{\Phi_d}<\infty. \label{hannan}%
		\end{equation}
		Then, for $\P$-almost all $\omega \in \Omega$,
		\begin{equation*}
			\biggl(\frac{1}{\sqrt{|\bm{n}|}}\bar{S}_{[\bm{t}\bm{n}]}\biggl)_{\bm{t} \in [0,1]^d}  \converge{\bm{n}}{\infty}{\textrm{$\mathcal{D}$}} (\sigma W_{\bm{t}})_{\bm{t} \in [0,1]^d} \quad \textrm{under }\quad \P^{\omega},
		\end{equation*}
		where $[\bm{t}\bm{n}]:= ([t_1n_1], \cdots, [t_dn_d])$, $\sigma^2$ is defined in Theorem \ref{main result square}, $(W_{\bm{t}})_{\bm{t} \in [0,1]^d}$ is a Brownian sheet, and the convergence happens in the Skorokhod space $D([0,1]^d)$. In addition, $\sigma ^2 = \lim\limits_{\bm{n} \to \infty}{\frac{\E[\bar{S}_{\bm{n}}^2]}{|\bm{n}|}}$.
	\end{theorem}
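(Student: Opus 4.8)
The plan is to run an ortho-martingale approximation and then transfer the quenched functional CLT for stationary ortho-martingales of Peligrad and Voln\'y \cite{PV} to $\bar S$. First I would set $D_{\bm 0}=\sum_{\bm i\ge\bm 0}\mathcal{P}_{\bm 0}(X_{\bm i})$: by the triangle inequality for the Luxemburg norm together with the projective hypothesis of Theorem \ref{main result}, this series converges in $L^{\Phi_d}$, so $D_{\bm 0}\in L^{\Phi_d}\hookrightarrow L^2$ (since $\Phi_d$ from \eqref{def_Phi} dominates $x^2$ at infinity when $d>1$), and $\sigma^2=\E[D_{\bm 0}^2]<\infty$. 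Writing $D_{\bm k}=D_{\bm 0}\circ T^{\bm k}$ and $M_{\bm n}=\sum_{\bm 1\le\bm k\le\bm n}D_{\bm k}$, the commuting property of the filtration makes $(D_{\bm k})$ an ortho-martingale difference field, and since $\E_{\bm n^{(i)}}[D_{\bm k}]=0$ whenever $k_i\ge 1$, the sum $M_{\bm n}$ is already ``fully centered'' in the sense matching the inclusion--exclusion centering that defines $\bar S_{\bm n}$. It then suffices to prove (i) that $M$ obeys the stated quenched FCLT, and (ii) that the remainder $\bar S_{\bm n}-M_{\bm n}$ is quenched-negligible, uniformly over rectangles.

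For the structure of the remainder I would invoke the martingale--coboundary decomposition of \cite{VWa1}, which under the present Orlicz condition holds in $L^{\Phi_d}$ and expresses $X_{\bm 0}=D_{\bm 0}+\sum_{\emptyset\neq A\subseteq\llbracket 1,d\rrbracket}C_{A}$, each $C_A$ being $\prod_{k\in A}(I-T_k)$ applied to an $L^{\Phi_d}$ function. Summing over $\bm 1\le\bm i\le\bm n$, each coboundary $C_A$ telescopes in the directions indexed by $A$; the random centering $R_{\bm n}$ is designed precisely to cancel the degenerate faces, so that $\bar S_{\bm n}-M_{\bm n}$ reduces to boundary contributions that are maxima, over lower-dimensional faces of the rectangle, of stationary fields lying in $L^{\Phi_d}$.

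The crux of the argument --- and the reason the Orlicz condition is needed here rather than the weaker $L^2$ condition \eqref{hannan_2} used in Theorem \ref{main result square} --- is to show that $\frac{1}{\sqrt{|\bm n|}}\max_{\bm 1\le\bm m\le\bm n}|\bar S_{\bm m}-M_{\bm m}|\to 0$. Over $d$-dimensional rectangles a typical boundary term is a maximum of $|\bm n|$ stationary copies of a fixed $g\in L^{\Phi_d}$, and one has $\frac{1}{\sqrt{|\bm n|}}\max_{\bm 1\le\bm m\le\bm n}|g_{\bm m}|\to 0$ $\P$-almost surely as soon as $\E[g^2(\log^+|g|)^{d-1}]<\infty$, i.e. $g\in L^{\Phi_d}$; this multiparameter strong-law estimate (a Borel--Cantelli argument over a geometric grid in each coordinate, consuming exactly the $(\log)^{d-1}$ factor carried by $\Phi_d$) is what dictates the choice of the Young function \eqref{def_Phi}, and I expect it to be the main obstacle. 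A pleasant feature is that this negligibility is an ordinary ($\P$-)almost-sure statement: applying Fubini to $\P=\int\P^{\omega}\,d\P(\omega)$ transfers it to $\P^{\omega}$-almost-sure, hence $\P^{\omega}$-in-probability, negligibility for $\P$-almost every $\omega$, so the genuinely quenched input is confined to the martingale term. Uniformity in $\bm t$ is then automatic because $\bar S$ and $M$ are constant on the grid cells, whence $\sup_{\bm t\in[0,1]^d}|\bar S_{[\bm t\bm n]}-M_{[\bm t\bm n]}|$ is dominated by the displayed maximum.

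Finally, for (i) I would apply the quenched functional CLT for stationary ortho-martingales of \cite{PV} to $(D_{\bm k})$: using the ergodicity of one of the $T_i$, it yields, for $\P$-almost every $\omega$, that $\frac{1}{\sqrt{|\bm n|}}M_{[\bm t\bm n]}\to\sigma W_{\bm t}$ in $D([0,1]^d)$ under $\P^{\omega}$, tightness included. Combining this with the quenched uniform negligibility of the remainder (a Slutsky-type argument in $D([0,1]^d)$, where the negligible perturbation affects neither the finite-dimensional limits nor tightness) gives the claimed convergence of $\frac{1}{\sqrt{|\bm n|}}\bar S_{[\bm t\bm n]}$ under $\P^{\omega}$ for $\P$-almost every $\omega$. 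The identification $\sigma^2=\E[D_{\bm 0}^2]$ is immediate from the construction, and since $\bar S_{\bm n}-M_{\bm n}$ is $o(\sqrt{|\bm n|})$ in $L^2$ one also obtains $\sigma^2=\lim_{\bm n\to\infty}\E[\bar S_{\bm n}^2]/|\bm n|$, completing the variance statement.
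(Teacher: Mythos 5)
There is a genuine gap, and it sits exactly at the step you call ``the structure of the remainder.'' You invoke the martingale--coboundary decomposition of \cite{VWa1} \emph{directly on} $X_{\bm 0}$, writing $X_{\bm 0}=D_{\bm 0}+\sum_{A}C_{A}$ with each coboundary transfer function in $L^{\Phi_d}$. No such decomposition follows from the Hannan-type hypothesis $\sum_{\bm u\ge\bm 0}\norm{\mathcal{P}_{\bm 0}(X_{\bm u})}_{\Phi_d}<\infty$: already in $d=1$, Hannan's condition yields only a martingale \emph{approximation} ($\norm{\E_0[S_n]}_2=o(\sqrt n)$), not a coboundary representation $X_0=D_0+(I-T)Z$ with $Z$ in $L^2$ (let alone in an Orlicz space); the latter requires strictly stronger conditions of Gordin type. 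Because of this, your subsequent claims collapse: the remainder $\bar S_{\bm n}-M_{\bm n}$ has no telescoping/boundary structure, it is not a maximum of stationary copies of a fixed $g\in L^{\Phi_d}$, and in particular it is \emph{not} $\P$-almost surely $o(\sqrt{|\bm n|})$ uniformly over rectangles --- so the Fubini transfer to $\P^{\omega}$, while a valid mechanism in itself, has nothing valid to transfer. Under Hannan the remainder is only $o(\sqrt{|\bm n|})$ in $L^2$, and upgrading this to a quenched, uniform-over-rectangles statement is precisely the hard part of the theorem, not a routine consequence.

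The paper circumvents this by a three-stage approximation that your proposal skips. First, $X_{\bm 0}$ is replaced by the finite-range variable $X_{\bm 0}^{(n)}=\sum_{\bm j\in\llbracket -n,0\rrbracket^d}\mathcal{P}_{\bm j}(X_{\bm 0})$, and the error $\bar S_{\bm m}(X_{\bm 0}-X_{\bm 0}^{(n)})$ is controlled by a \emph{conditional} maximal inequality (the Main Lemma): using the projective decomposition of $\bar S_{\bm n}(h)$, Cairoli's inequality, and Krengel's maximal ergodic theorem in $L\log^{d-1}L$ (Lemma \ref{lemme_int}), one gets the domination \eqref{lemme_princip} by an integrable function, whence the almost-sure control \eqref{approx_fn} along a subsequence --- this is where the Orlicz condition is actually consumed, not in a Borel--Cantelli maximum estimate. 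Second, the coboundary decomposition of \cite{VWa1} is applied only to $X_{\bm 0}^{(n)}$, where it is legitimate (a finite sum of projections), and the coboundary part is handled by Remark 11/Theorem 7 of \cite{PV}, giving \eqref{approx_mart}; the centering $R_{\bm m}(X_{\bm 0}^{(n)})$ is killed separately by Lemma \ref{fact4} (equation \eqref{conv_rest}). Third, the finite-range martingale $d_n$ is compared to $D_0$ via Cairoli's inequality plus the Dunford--Schwartz ergodic theorem, yielding \eqref{approx_mn}. Only then does the quenched FCLT for ortho-martingales of \cite{PV} plus the Slutsky-type Theorem 3.1 of \cite{Neuhaus} finish the proof --- these last two ingredients you did identify correctly, as well as the fact that $D_{\bm 0}\in L^2\log^{d-1}L$ by norm-convergence of the defining series. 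To repair your argument you would have to either prove the coboundary decomposition you assume (false in this generality) or insert the finite-range approximation and the conditional maximal inequality that the paper builds for exactly this purpose.
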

	We remark that this result and the following ones extend Theorem 4.2, the first part of Corollary 4.3, and Theorem 2.6 (b) in \cite{MR4166203} by obtaining functional versions of these theorems.
	\begin{corollary}\label{Cor_FCLT_d}
		Suppose that the hypotheses of Theorem \ref{main result} hold and assume that in addition, for every $i\in \llbracket 1,d\rrbracket$,
		\[
		\frac{1}{|\bm{n}|}\E_{\bm{0}}\biggl[\max _{\bm{1} \le \bm{m} \le \bm{n}}\bigr(\E_{\bm{m}^{(i)}}[S_{\bm{m}}]\bigl)^2\biggr] \converge{\bm{n}}{\infty}{\normalfont{a.s.}}0.
		\]
		Then, for $\P$-almost all $\omega \in \Omega$,
		\begin{equation}\label{QFCTL no centering}
			\biggl(\frac{1}{\sqrt{|\bm{n}|}}S_{[\bm{t}\bm{n}]}\biggr)_{\bm{t} \in [0,1]^d}  \converge{\bm{n}}{\infty}{\textrm{$\mathcal{D}$}} (\sigma W_{\bm{t}})_{\bm{t} \in [0,1]^d} \quad \textrm{under }\quad \P^{\omega},
		\end{equation}
		where $(W_{\bm{t}})_{\bm{t} \in [0,1]^d}$ is a Brownian sheet and the convergence happens in the Skorokhod space $D([0,1]^d)$.
	\end{corollary}
	\begin{corollary} \label{main result no centering}
		Assume that the hypotheses of Theorem \ref{main result} and \eqref{higher moment 2} hold.	Then for almost all $\omega\in\Omega$, \eqref{QFCTL no centering} holds.
	\end{corollary}
	This last Corollary not only extends Theorem 4.4 (b) in \cite{MR4166203} to the functional case but also reduces the required condition even in the classical CLT case.
	\begin{corollary}\label{coro no centering}
		Assume that $(X_{\bm{n}})_{\bm{n} \in \Z^d}$ is defined by \eqref{Xdef}, that $(\mathcal{F}_{\bm{n}})_{\bm{n} \in \Z^d}$ is given by \eqref{Fdef} and is a commuting filtration, and that one of the transformations $T_i, 1 \leq i \le d,$ is ergodic. If the following condition is satisfied:
		\begin{equation}
			\sum_{\bm{u}\geq \bm{1}}\frac{{\|{\mathbb{E}_{\bm{1}}[X_{\bm{u}}]}\|}_{\Phi_d}}{\Phi_d^{-1}(|\bm{u}|)}<\infty.
			\label{higher moment phi}%
		\end{equation}
		Then, for almost all $\omega\in\Omega$, the conclusion of Corollary \ref{Cor_FCLT_d} holds.
	\end{corollary}
	\section{Proofs of the results}
	\label{sec:proofs}
	Before we prove the previous results, we start by defining some additional notations: 
	\begin{itemize}
		\item if $h:\Omega \to \mathbb{R}$ is a measurable function, we will denote by $h_{\bm{u}}, \bm{u} \in \mathbb{Z}^d$, the function $h \circ T^{\bm{u}}$;\
		\item for any $\bm{n} \in (\mathbb{N^*})^d$ and for any measurable function $h : \Omega \to \mathbb{R}$, we denote
		\[
		S_{\bm{{n}}}(h) = \sum_{\bm{1} \le \bm{i} \le \bm{n} }h_{\bm{i}}\quad \textrm{and}\quad  \bar{S}_{\bm{n}}(h) = S_{\bm{{n}}}(h) - R_{\bm{{n}}}(h)
		\]
		where $$R_{\bm{{n}}}(h) = \sum_{i=1}^{d}(-1)^{i-1}\sum_{1 \le j_1 < \cdots < j_i \le d}\mathbb{E}_{\bm{n}^{(j_1,\cdots,j_i)}}[S_{\bm{n}}(h)],$$ and $\bm{n}^{(j_1,\cdots,j_i)}$  is the multi-index whose $j_1,\cdots,j_i-$th coordinates are $0$ and the others are equal to the corresponding coordinates of $\bm{n}$;\
		\item for any $i \in \llbracket 1,d\rrbracket$ and for any $\ell \in \mathbb{N}$, we denote
		\[
		\mathcal{F}_{\ell}^{(i)} = \bigvee_{\substack{\bm{k} \in \mathbb{Z}^d\\ k_i \le l}} \mathcal{F}_{\bm{k}};
		\]\
		\item we set $L^2\log^{d-1}L(\mathcal{G})$ to be the set of $\mathcal{G}$-measurable functions $h :\Omega \to \mathbb{R}$ such that \\$\mathbb{E}\Bigl[h^2\max\bigl(0,\log |h|\bigr)^{d-1}\Bigr] < \infty$; if $\mathcal{G} = \mathcal{F}$, we simply write $L^2\log^{d-1}L(\mathcal{F}) = L^2\log^{d-1}L$;\
		\item if $h \in L^2\log^{d-1}L$, then we define the maximal operator $h^* = \sup_{\bm{m} > \bm{0}}{\frac{1}{|\bm{m}|}\sum_{\bm{1} \le \bm{i} \le \bm{m}}|h|\circ T^{\bm{i}}}$.
	\end{itemize}
	Let us start with the proof of Theorem~\ref{main result} as it is the most general result. Moreover, the computations used in the proof of Theorem~\ref{main result square} are a particular case of the computations used in the proof of Theorem~\ref{main result} and will be largely skipped.
	\\
	\\
	The proof of Theorem~\ref{main result} relies on the following important lemma which we will refer to as the Main Lemma in the rest of the paper
	\begin{lemma}[Main Lemma]
		\label{lem:main}
		For any $\mathcal{F}_{\bm{0}}$-measurable function $h \in L^2\log ^{d-1}L$ satisfying the following condition:
		\begin{equation}\label{hannan_h}
			\sum_{\bm{u} \ge \bm{0}}{\|\mathcal{P}_{\bm{0}}(h_{\bm{u}})\|}_{\Phi_d} < \infty,
		\end{equation} there exists an integrable function $g$ such that for all $\bm{N} \in (\mathbb{N}^*)^d$,
		\[
		\sqrt{\mathbb{E}_{\bm{0}}\biggl[\max_{\bm{1} \le \bm{n} \le \bm{N}}\frac{1}{|\bm{n}|}\bigl|\overline{S}_{\bm{n}}(h)\bigr|^2\biggr]} \le g \quad \mathbb{P}-\mathrm{a.s.}
		\]
	\end{lemma}
	To establish this lemma, we shall first obtain the following intermediary lemma.
	\begin{lemma}\label{lemme_int}
		For any function $h \in L^2\log ^{d-1}L$, there exists a constant $C>0$ such that for all $\bm{u} \in \mathbb{Z}^d$, we have
		\[
		{\Bigg\|\sqrt{\biggl(\bigl|\mathcal{P}_{\bm{0}}(h_{\bm{u}})\bigr|^2\biggr)^*}\Biggr\|}_1 \le C{\|\mathcal{P}_{\bm{0}}(h_{\bm{u}})\|}_{\Phi_d}.
		\]
	\end{lemma}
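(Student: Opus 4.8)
The plan is to recognize the left-hand side as an $L^1$ bound for the square root of a $d$-parameter ergodic maximal function, and to reduce the statement to the multiparameter maximal inequality with an $L\log^{d-1}L$ gauge. Write $f:=\mathcal{P}_{\bm{0}}(h_{\bm{u}})$ and $G:=|f|^2$, so that the quantity inside the norm is $\sqrt{G^*}$, where $G^*=\sup_{\bm{m}>\bm{0}}\frac{1}{|\bm{m}|}\sum_{\bm{1}\le\bm{i}\le\bm{m}}G\circ T^{\bm{i}}$ is exactly the strong (rectangular) Cesàro maximal operator associated with the commuting measure-preserving maps $T_1,\dots,T_d$. Since $f\in L^2\log^{d-1}L$ is equivalent to $G\in L\log^{d-1}L$ (because $\log^+\sqrt{G}=\tfrac12\log^+ G$), the natural tool is the ergodic analogue of the Jessen--Marcinkiewicz--Zygmund strong maximal theorem: there is a constant $C_d$ with
\[
\P\left(G^*>\lambda\right)\le\frac{C_d}{\lambda}\int_\Omega G\left(1+\log^+\frac{G}{\lambda}\right)^{d-1}\,d\P\qquad(\lambda>0).
\]
Here the exponent $d-1$ on the logarithm is dictated precisely by the $d$-fold averaging, which is why the Young function $\Phi_d$ carries the power $d-1$.

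By homogeneity (both sides of the asserted inequality are positively homogeneous of degree one in $f$) I may assume $\norm{f}_{\Phi_d}=1$, i.e. $\E[\Phi_d(|f|)]\le 1$. Then I would pass from the weak-type bound to the $L^1$ bound on $\sqrt{G^*}$ via the layer-cake formula
\[
\E\!\left[\sqrt{G^*}\right]=\int_0^\infty\P\left(G^*>\lambda\right)\frac{d\lambda}{2\sqrt\lambda},
\]
splitting the integral at $\lambda=1$: below $\lambda=1$ I use the trivial bound $\P(\cdot)\le 1$, contributing $1$, and above I insert the maximal inequality and apply Fubini to integrate in $\lambda$ first. The inner integral $\int_1^\infty\lambda^{-3/2}(1+\log^+(G/\lambda))^{d-1}\,d\lambda$ is estimated via the substitution $\lambda=G\re^{-s}$, which turns its main part into $G^{-1/2}\int_0^{\log^+ G}\re^{s/2}(1+s)^{d-1}\,ds$ plus a harmless tail; since the latter integral is at most $C_d(1+\log^+ G)^{d-1}\sqrt{G}$, the whole inner integral is at most $C_d'(1+\log^+ G)^{d-1}$.

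This yields $\E[\sqrt{G^*}]\le 1+C_d''\,\E[G(1+\log^+ G)^{d-1}]$, and it then remains to bound the right-hand side by a constant using $G=|f|^2$ together with $\E[\Phi_d(|f|)]\le 1$: splitting over $\{|f|\le 1\}$ and $\{|f|>1\}$, on the large set the estimate $(\log(1+|f|))^{d-1}\ge(\log 2)^{d-1}$ controls every power $(\log^+|f|)^k$, $0\le k\le d-1$, by $\Phi_d(|f|)$, while on the small set $|f|^2\le 1$ gives an $L^2$ contribution bounded by $\P(\Omega)=1$; hence $\E[G(1+\log^+ G)^{d-1}]\le C_d$. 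Undoing the normalization by homogeneity gives the claim with a constant depending only on $d$. The main obstacle is entirely the first step, namely having at hand (or establishing) the multiparameter maximal ergodic inequality in the $L\log^{d-1}L$ scale for commuting transformations; granting it, the remainder is the layer-cake and Fubini bookkeeping sketched above.
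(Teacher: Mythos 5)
Your proposal is correct and takes essentially the same route as the paper: the paper's proof also rests on the multiparameter weak-type maximal ergodic inequality in the $L\log^{d-1}L$ scale (quoted from Krengel, Chapter 6, Corollary 1.7 --- precisely the ergodic Jessen--Marcinkiewicz--Zygmund/Fava bound you assume) followed by a layer-cake integration of the tail of $\sqrt{\left(\left|\mathcal{P}_{\bm{0}}(h_{\bm{u}})\right|^2\right)^*}$. The only difference is bookkeeping: instead of normalizing, applying Fubini and then bounding the modular $\mathbb{E}\left[G\left(1+\log^+ G\right)^{d-1}\right]$, the paper feeds the Luxemburg-norm definition directly into the weak-type bound to get the tail estimate $C t^{-2}\norm{\mathcal{P}_{\bm{0}}(h_{\bm{u}})}_{\Phi_d}^2$ for $t > \norm{\mathcal{P}_{\bm{0}}(h_{\bm{u}})}_{\Phi_d}$, and then integrates it, splitting the integral at $\norm{\mathcal{P}_{\bm{0}}(h_{\bm{u}})}_{\Phi_d}$ exactly as you split yours at $\lambda = 1$.
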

	\begin{proof}[Proof of Lemma \ref{lemme_int}]
		Let $h \in L^2\log ^{d-1}L$, $\bm{u} \in \mathbb{Z}^d$ and $t > {\|\mathcal{P}_{\bm{0}}(h_{\bm{u}})\|}_{\Phi_d}$. We let 
		\[
		\Omega _{t} = \Bigl\{ \omega \in \Omega : 4\bigl(\mathcal{P}_{\bm{0}}(h_{\bm{u}})\bigr)^2(\omega) >t^2\Bigr\}. 
		\]
		According to Corollary 1.7 of Chapter 6 in \cite{MR0797411}, there exists a constant $C_d > 0$ such that
		\begin{align*}
			\mathbb{P}\Biggl(\sup_{\bm{n} \in (\mathbb{N}^*)^d}\frac{1}{|\bm{n}|}\sum_{\bm{1}\le\bm{i} \le\bm{n}}\Bigl(\mathcal{P}_{\bm{0}}(h_{\bm{u}})\circ T^{\bm{i}}\Bigr)^2 > t^2\Biggr) & \le 
			C_d\int _{\Omega _{t}}\frac{4\bigl(\mathcal{P}_{\bm{0}}(h_{\bm{u}})\bigr)^2}{t^2}\Biggl(\log\Biggl( \frac{4\bigl(\mathcal{P}_{\bm{0}}(h_{\bm{u}})\bigr)^2}{t^2}\Biggr)\Biggr)^{d-1}\mathrm{d}\mathbb{P}\\
			& \le 2^{d-1}C_d\int _{\Omega _{t}}\frac{\bigl(2\mathcal{P}_{\bm{0}}(h_{\bm{u}})\bigr)^2}{t^2}\Biggl(\log \Biggl(1+\frac{2|\mathcal{P}_{\bm{0}}(h_{\bm{u}})|}{t}\Biggr)\Biggr)^{d-1}\mathrm{d}\mathbb{P}\\
			& \le 2^{d+1}C_dt^{-2}{\|{\mathcal{P}_{\bm{0}}(h_{\bm{u}})}\|}_{\Phi_d}^2.
		\end{align*}
		The last inequality results from the fact that
		\[
		{\|\mathcal{P}_{\bm{0}}(h_{\bm{u}})\|}_{\Phi_d} = \inf \biggl\{ t > 0 : \mathbb{E}\biggl[\Phi_d\biggl(\frac{|\mathcal{P}_{\bm{0}}(h_{\bm{u}})|}{t}\biggr)\biggr] \le 1\biggr\}.
		\]
		Indeed, by letting $t_0 = {\|\mathcal{P}_{\bm{0}}(h_{\bm{u}})\|}_{\Phi_d}$, we have
		\[
		\mathbb{E}\Biggl[(\mathcal{P}_{\bm{0}}\bigl(h_{\bm{u}})\bigr)^2 \biggl(\log \biggl(1 + \frac{|\mathcal{P}_{\bm{0}}(h_{\bm{u}})|}{t_0}\biggr)\biggr)^{d-1}\Biggr] \le t_0^2.
		\]
		Hence since $t > t_0$,
		\[
		\int_{\Omega}\frac{\bigl(\mathcal{P}_{\bm{0}}(h_{\bm{u}})\bigr)^2}{t^2}\Biggl(\log \Biggl(1+\frac{|\mathcal{P}_{\bm{0}}(h_{\bm{u}})|}{t}\Biggr)\Biggr)^{d-1}\mathrm{d}\mathbb{P} \le t_0^2t^{-2}.
		\]
		Therefore, applying this inequality to $h' = 2h \in L^2\log^{d-1}L$, we get
		\[
		\int _{\Omega _{t}}\frac{\bigl(2\mathcal{P}_{\bm{0}}(h_{\bm{u}})\bigr)^2}{t^2}\Biggl(\log \Biggl(1+\frac{2|\mathcal{P}_{\bm{0}}(h_{\bm{u}})|}{t}\Biggr)\Biggr)^{d-1}\mathrm{d}\mathbb{P} \le 4t^{-2}{\|\mathcal{P}_{\bm{0}}(h_{\bm{u}})\|}_{\Phi_d}^2.
		\]
		Thus
		\begin{align*}
			{\Biggl\|\sqrt{\biggl(\bigl|\mathcal{P}_{\bm{0}}(h_{\bm{u}})\bigr|^2\biggr)^*}\Biggr\|}_1 & = \int_{0}^{\infty}{\mathbb{P}\Biggl(\sup_{\bm{n} \in (\mathbb{N}^*)^d}\frac{1}{|\bm{n}|}\sum_{\bm{1}\le\bm{i} \le\bm{n}}\Bigl(\mathcal{P}_{\bm{0}}(h_{\bm{u}})\circ T^{\bm{i}}\Bigr)^2 > t^2\Biggr)\mathrm{d}t}\\
			& \le \int_{0}^{t_0}1\mathrm{d}t + \int_{t_0}^{\infty}{\mathbb{P}\Biggl(\sup_{\bm{n} \in (\mathbb{N}^*)^d}\frac{1}{|\bm{n}|}\sum_{\bm{1}\le\bm{i} \le\bm{n}}\Bigl(\mathcal{P}_{\bm{0}}(h_{\bm{u}})\circ T^{\bm{i}}\Bigr)^2 > t^2\Biggr)\mathrm{d}t}\\
			& \le (2^{d+1}C_d+1){\|\mathcal{P}_{\bm{0}}(h_{\bm{u}})\|}_{\Phi_d}.
		\end{align*}
	\end{proof}
	\begin{proof} [Proof of the Main Lemma]
		We consider a measurable function $h$ satisfying the hypotheses of the lemma and we let $\bm{n}, \bm{N}\in(\mathbb{N}^*)^d$ such that $\bm{n}\le\bm{N}$. Then, we start by studying the quantity $\bar{S}_{\bm{n}}(h)$ using the following projective decomposition  (see \citealp{MR3798239}):
		
		\begin{equation*}
			S_{\bm{n}}(h)-R_{\bm{n}}(h)=\sum_{\bm{1} \le \bm{i} \le \bm{n}}{\mathcal{P}}_{\bm{i}}\Biggl({\sum
				\limits_{\bm{i} \le \bm{u} \le \bm{n}}}h_{\bm{u}}\Biggr)
			=\sum_{\bm{1} \le \bm{i} \le \bm{n}}{\mathcal{P}}_{\bm{0}}\Biggl({\sum
				\limits_{\bm{0} \le \bm{u} \le \bm{n}-\bm{i}}}h_{\bm{u}}\Biggr) \circ T^{\bm{i}}.\label{ort dec_2}%
		\end{equation*}
		By exchanging the sums, we get
		\begin{equation*}
			\bar{S}_{\bm{n}}(h)=\sum_{\bm{0} \le \bm{u} \le \bm{n} - \bm{1}}\sum_{\bm{1} \le \bm{i} \le \bm{n} - \bm{u}}\mathcal{P}_{\bm{0}}(h_{\bm{u}})\circ T^{\bm{i}}.
		\end{equation*}
		
		Then, recalling that $\bm{n} \le \bm{N}$, we obtain
		
		\[
		\bigl|\bar{S}_{\bm{n}}(h)\bigr|\leq \sum_{\bm{0} \le \bm{u} \le \bm{N} - \bm{1}}\max_{\bm{1}\leq \bm{k}\leq \bm{N}}\Biggl \lvert\sum_{\bm{1} \le \bm{i} \le \bm{k}}\mathcal{P}_{\bm{0}}(h_{\bm{u}})\circ T^{\bm{i}}\Biggr \rvert.
		\]
		
		Note that for all $\bm{u} \ge \bm{0}$, the partial sum
		$\displaystyle \sum_{\bm{1} \le \bm{i} \le \bm{k}}\mathcal{P}_{\bm{0}}(h_{\bm{u}})\circ T^{\bm{i}}$ is an ortho-martingale. Using Cairoli's inequality for ortho-martingales (see \citealp{MR1914748}), we find that
		
		\[
		\mathbb{E}_{\bm{0}}\Biggl[\max_{\bm{1}\leq \bm{k}\leq \bm{N}} \Biggl|\sum_{\bm{1} \le \bm{i}\le \bm{k}}\mathcal{P}_{\bm{0}}(h_{\bm{u}})\circ T^{\bm{i}} \Biggr|^2 \Biggr]%
		\leq 
		2^{2d} \mathbb{E}_{\bm{0}}\Biggl[  \Biggl(\sum_{\bm{1} \le \bm{i} \le \bm{N}}\mathcal{P}_{\bm{0}}(h_{\bm{u}})\circ T^{\bm{i}}\Biggr)^2 \Biggr].
		\]
		
		By orthogonality, we obtain for all $\bm{N} \in (\mathbb{N}^*)^d$
		
		\[
		\sqrt{
			\mathbb{E}_{\bm{0}}\biggl[\max_{\bm{1}\leq \bm{n}\leq \bm{N}}\bigl|\bar{S}_{\bm{n}}(h)\bigr|^2\biggr]}%
		\leq %
		2^d \sum_{\bm{u} \ge \bm{0}}  \sqrt{\sum_{\bm{1} \le \bm{i} \le \bm{N}}\mathbb{E}_{\bm{0}}\Bigl[\bigr(\mathcal{P}_{\bm{0}}(h_{\bm{u}})\bigl)^2\circ T^{\bm{i}} \Bigr]} 
		\leq 2^{d} \sqrt{|\bm{N}|} \sum_{\bm{u} \ge \bm{0}}\sqrt{\Bigl(\bigl|\mathcal{P}_{\bm{0}}(h_{\bm{u}})\bigr|^2\Bigr)^{*}}.
		\]
		Since the previous inequality is satisfied for all $\bm{N} \in (\mathbb{N}^*)^d$, then it also holds
		\begin{equation}\label{lemme_princip}
			\sqrt{\mathbb{E}_{\bm{0}}\biggl[\max_{\bm{1} \le \bm{n} \le \bm{N}}\frac{1}{|\bm{n}|}\bigl|\overline{S}_{\bm{n}}(h)\bigr|^2\biggr]} \le 2^d\sum_{\bm{u} \ge \bm{0}}\sqrt{\Bigl(\bigl|\mathcal{P}_{\bm{0}}(h_{\bm{u}})\bigr|^2\Bigr)^*}.
		\end{equation}
		However, according to Lemma \ref{lemme_int} and hypothesis \eqref{hannan_h}, there exists $C > 0$ such that
		\[
		{\Biggl\| \sum_{\bm{u} \ge \bm{0}}\sqrt{\Bigl(\bigl|\mathcal{P}_{\bm{0}}(h_{\bm{u}})\bigr|^2\Bigr)^*}\Biggr\|}_1 \le C\sum_{\bm{u} \ge \bm{0}}{\|\mathcal{P}_{\bm{0}}(h_{\bm{u}})\|}_{\Phi_d} < \infty.
		\]
		This concludes the proof of the main lemma.
	\end{proof}
	\begin{proof}[Proof of Theorem \ref{main result}]
		For any $n \in \mathbb{N}^*$, we let
		\[
		X_{\bm{0}}^{(n)} = \sum_{\bm{j} \in \llbracket -n,0\rrbracket^d}\mathcal{P}_{\bm{j}}(X_{\bm{0}})
		\]
		Given the regularity of $X_{\bm{0}}$, the sequence of random variable  $(X_{\bm{0}}-X_{\bm{0}}^{(n)})_{n \in \mathbb{N}}$ converges almost surely to $0$ and using \eqref{lemme_princip}, we get the inequality
		\begin{equation*}
			\limsup\limits_{\bm{N} \to \infty}\sqrt{\mathbb{E}_{\bm{0}}\biggl[\max_{\bm{1} \le \bm{m} \le \bm{N}}\frac{1}{|\bm{m}|}\bigl|\overline{S}_{\bm{m}}(X_{\bm{0}}-X_{\bm{0}}^{(n)})\bigr|^2\biggr]} \le 2^d\sum_{\bm{u}\ge \bm{0}}\sqrt{\biggl(\Bigl|\mathcal{P}_{\bm{0}}\Bigl((X_{\bm{0}} - X_{\bm{0}}^{(n)}) \circ T^{\bm{u}}\Bigr)\Bigr|^2\biggr)^*}
		\end{equation*}
		for all $n \in \mathbb{N}^*$. Then, using lemma \ref{lemme_int}, there exists a constant $C$ such that
		\[
		{\Biggl\|\sqrt{\limsup\limits_{\bm{N} \to \infty}\mathbb{E}_{\bm{0}}\biggl[\max_{\bm{1} \le \bm{m} \le \bm{N}}\frac{1}{|\bm{m}|}\bigl|\overline{S}_{\bm{m}}(X_{\bm{0}}-X_{\bm{0}}^{(n)})\bigr|^2\biggr]}\Biggr\|}_1 \le C\sum_{\bm{u} \ge \bm{0}}{{\Bigl\|\mathcal{P}_{\bm{0}}\Bigl((X_{\bm{0}} - X_{\bm{0}}^{(n)}) \circ T^{\bm{u}}\Bigr)\Bigr\|}_{\Phi_d}} \converge{n}{\infty}{}0.
		\]
		Therefore, there exists an increasing sequence of integers $(n_k)_{k \in \mathbb{N}}$ such that
		\begin{equation}\label{approx_fn}
			\lim\limits_{k \to \infty}\limsup\limits_{\bm{N} \to \infty}\mathbb{E}_{\bm{0}}\biggl[\max_{\bm{1} \le \bm{m} \le \bm{N}}\frac{1}{|\bm{m}|}\bigl|\overline{S}_{\bm{m}}(X_{\bm{0}}-X_{\bm{0}}^{(n_k)})\bigr|^2\biggr] = 0 \ \ \ \ \mathrm{a.s.}
		\end{equation}
		Moreover, we also have, for all $n \in \mathbb{N}^*$
		\begin{equation}\label{conv_rest}
			\frac{1}{|\bm{N}|}\mathbb{E}_{\bm{0}}\biggl[\max_{\bm{1} \le \bm{i} \le \bm{N}}\bigl|R_{\bm{i}}(X_{\bm{0}}^{(n)})\bigr|^2\biggr] \converge{\bm{N}}{\infty}{\textrm{a.s.}}0.
		\end{equation}
		Indeed, using the triangle inequality, it is enough to show that for all $\bm{i} \in \llbracket -n,0\rrbracket^d$ 
		\[
		\frac{1}{|\bm{N}|}\mathbb{E}_{\bm{0}}\biggl[\max_{\bm{1} \le \bm{j} \le \bm{N}}\bigl|R_{\bm{j}}(\mathcal{P}_{\bm{i}}(X_{\bm{0}}))\bigr|^2\biggr] \converge{\bm{N}}{\infty}{\textrm{a.s.}}0.
		\]
		This holds true by applying the following lemma.
		\begin{lemma}\label{fact4}
			For any square integrable $\mathcal{F}_{\bm{0}}$-measurable function $h$, the condition 
			\begin{equation}
				\label{higher_moment_2_h}
				\sum_{\bm{u} \ge \bm{1}}\frac{{\|\mathbb{E}_{\bm{1}}[h_{\bm{u}}]\|}_{2}}{|\bm{u}|^{1/2}} < \infty
			\end{equation} implies
			\[
			\frac{1}{|\bm{N}|}\mathbb{E}_{\bm{0}}\biggl[\max_{\bm{1} \le \bm{n} \le \bm{N}} \bigl|R_{\bm{n}}(h)\bigr|^2\biggr] \converge{\bm{N}}{\infty}{\textrm{a.s.}} 0.
			\]
		\end{lemma}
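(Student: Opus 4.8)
The plan is to expand the random centering by inclusion--exclusion and treat its pieces one at a time. Writing $R_{\bm n}(h)=\sum_{\emptyset\neq J\subseteq\llbracket 1,d\rrbracket}(-1)^{|J|-1}\mathbb{E}_{\bm n^{(J)}}[S_{\bm n}(h)]$ and applying the triangle inequality for the conditional seminorm $\sqrt{\mathbb{E}_{\bm 0}[\max_{\bm 1\le\bm n\le\bm N}|\cdot|^2]}$ (Minkowski), it suffices to prove, for each fixed nonempty $J$, that
\[
\frac{1}{|\bm N|}\mathbb{E}_{\bm 0}\Big[\max_{\bm 1\le\bm n\le\bm N}\big|\mathbb{E}_{\bm n^{(J)}}[S_{\bm n}(h)]\big|^2\Big]\longrightarrow 0\qquad\mathbb{P}\text{-a.s.}
\]
Throughout I split a multi-index $\bm n$ into its $J$-part $\bm n_J$ and its complementary part $\bm n^{(J)}$ (coordinates in $J$ set to $0$), and I set $|\bm N_J|=\prod_{k\in J}N_k$, $|\bm N_{\bar J}|=\prod_{k\notin J}N_k$, so that $|\bm N|=|\bm N_J|\,|\bm N_{\bar J}|$; likewise $|\bm j_J|=\prod_{k\in J}j_k$.

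Next I would exhibit an ortho-martingale structure in the directions of the complement $\bar J$. Using the commuting property in the form $\mathbb{E}_{\bm a}[Y]=\mathbb{E}_{\bm a\wedge\bm b}[Y]$ for $\mathcal{F}_{\bm b}$-measurable $Y$, together with the covariance identity $\mathbb{E}_{\bm c}[Z\circ T^{\bm c}]=(\mathbb{E}_{\bm 0}[Z])\circ T^{\bm c}$, one gets $\mathbb{E}_{\bm n^{(J)}}[h_{\bm j}]=(\mathbb{E}_{\bm 0}[h_{\bm j_J}])\circ T^{\bm j^{(J)}}$ for $\bm 1\le\bm j\le\bm n$, whence
\[
\mathbb{E}_{\bm n^{(J)}}[S_{\bm n}(h)]=\sum_{\substack{1\le j_k\le n_k\\ k\in J}}\Big(\sum_{\substack{1\le j_k\le n_k\\ k\notin J}}g_{\bm j_J}\circ T^{\bm j^{(J)}}\Big),\qquad g_{\bm j_J}:=\mathbb{E}_{\bm 0}[h_{\bm j_J}].
\]
Since $h$ is regular (in the application it is the full projection $\mathcal{P}_{\bm i}(X_{\bm 0})$, so that $\mathbb{E}[g_{\bm j_J}\,|\,\mathcal{F}_{-\infty\bm e_k}]=0$ for every $k\notin J$), each inner sum is an ortho-martingale in the $d-|J|$ directions of $\bar J$. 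I would then apply Cairoli's inequality \cite{C69} in those directions, followed by conditional orthogonality. The key gain is that Cairoli controls the running maximum over $\bm n_{\bar J}$ with no logarithmic loss, which is exactly why the plain $L^2$ hypothesis \eqref{higher_moment_2_h} is enough here and no Orlicz norm is required.

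The third step turns the estimate into a Kronecker-type series in the $J$-directions. After Cairoli, orthogonality, and Minkowski over $\bm j_J$, dividing by $|\bm N|$ cancels the factor $|\bm N_{\bar J}|$ produced by the $d-|J|$ martingale directions and leaves a bound of the shape $\big(|\bm N_J|^{-1/2}\sum_{1\le j_k\le N_k,\,k\in J}b_{\bm j_J}\big)^2$, where $b_{\bm j_J}$ is comparable to $\norm{g_{\bm j_J}}_2=\norm{\mathbb{E}_{\bm 1}[h_{\bm u}]}_2$ with $\bm u=\bm j_J+\bm 1$. Because this $\bm u$ satisfies $|\bm u|\asymp|\bm j_J|$, hypothesis \eqref{higher_moment_2_h} yields $\sum_{\bm j_J}|\bm j_J|^{-1/2}\norm{g_{\bm j_J}}_2<\infty$, and a multiparameter Kronecker lemma (iterating the one-dimensional statement coordinate by coordinate, using the product structure of the weight $|\bm j_J|^{1/2}=\prod_{k\in J}j_k^{1/2}$) gives $|\bm N_J|^{-1/2}\sum_{\bm j_J\le\bm N_J}\norm{g_{\bm j_J}}_2\to 0$. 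Carried through with full expectations this already proves convergence to $0$ in $L^1$.

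The main obstacle is upgrading this $L^1$ (equivalently, in-probability) statement to the required almost-sure convergence over the full rectangular net $\bm N\to\infty$. The naive pathwise dominant would replace the $\bar J$-averages by the strong Cesàro maximal function $(|g_{\bm j_J}|^2)^{*}$ in the $d-|J|$ directions of $\bar J$; but under the mere $L^2$ assumption this maximal function need not be integrable as soon as $d-|J|\ge 2$ (its sharp threshold being $L^2(\log L)^{d-|J|-1}$), so that route is blocked. I therefore plan to produce the $\bm N$-independent, $\mathcal{F}_{\bm 0}$-measurable dominants needed for a pathwise Kronecker argument from the multiparameter pointwise ergodic theorem applied to $|\bm N_{\bar J}|^{-1}\sum_{\bm j_{\bar J}}(g_{\bm j_J}^2)\circ T^{\bm j^{(J)}}$ (this is where the assumed ergodicity of one $T_k$ enters), and to fill in the net by a blocking/subsequence argument exploiting the monotonicity of $\bm N\mapsto\max_{\bm 1\le\bm n\le\bm N}|R_{\bm n}(h)|^2$. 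Reconciling the $L^2$-only control with almost-sure rectangular convergence is the delicate heart of the argument.
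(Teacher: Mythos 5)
Your reduction by inclusion--exclusion and Minkowski is fine, and your identity $\mathbb{E}_{\bm n^{(J)}}[h_{\bm j}]=(\mathbb{E}_{\bm 0}[h_{\bm j_J}])\circ T^{\bm j^{(J)}}$ is correct, but the proof breaks immediately after: the claimed ortho-martingale structure does not exist. For the inner sums $\sum_{\bm j_{\bar J}} g_{\bm j_J}\circ T^{\bm j^{(J)}}$, $g_{\bm j_J}=\mathbb{E}_{\bm 0}[h_{\bm j_J}]$, to be ortho-martingales in the directions of $\bar J$, the summands must be ortho-martingale differences, i.e.\ one needs the \emph{finite-lag} condition $\mathbb{E}_{-\bm e_k}[g_{\bm j_J}]=0$ for every $k\notin J$. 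By the commuting property, $\mathbb{E}_{-\bm e_k}[g_{\bm j_J}]=\mathbb{E}_{-\bm e_k}[h_{\bm j_J}]$, and nothing in the hypotheses of the lemma (square integrability, $\mathcal{F}_{\bm 0}$-measurability, condition \eqref{higher_moment_2_h}) forces this to vanish; regularity does not either, since it only gives $\mathbb{E}[\,\cdot\,|\mathcal{F}_{-\infty\bm e_k}]=0$, not vanishing at lag one. The failure is concrete even in the paper's own application: there the lemma is applied to $h=\mathcal{P}_{\bm i}(X_{\bm 0})$ with $\bm i\in\llbracket -n,0\rrbracket^d$, and when $i_k\le -1$ one has $\mathbb{E}_{-\bm e_k}[h]=h\neq 0$. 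Without the ortho-martingale property, Cairoli's inequality cannot be invoked, so the quantitative backbone of your argument is missing.

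Second, even granting that structure, the obstacle you yourself diagnose in the last paragraph is fatal to this route and is not resolved by your proposed fix. After Cairoli you must control, almost surely and over the unrestricted rectangular net, Ces\`aro averages of $g_{\bm j_J}^2\circ T^{\bm j^{(J)}}$ in $d-|J|\ge 2$ directions; this requires $g_{\bm j_J}^2\in L\log^{d-|J|-1}L$, and the multiparameter pointwise ergodic theorem you invoke as a substitute is subject to exactly the same threshold (it fails in $L^1$ for two or more parameters), so it cannot supply the needed dominants; the blocking/subsequence idea is left as a sketch. The paper's proof is organized precisely so that this issue never arises: it proceeds by induction on the number of zeroed coordinates, and in the induction step it telescopes the difference $\mathbb{E}_{\bm n^{(j_1,\ldots,j_k)}}[S_{\bm n}(h)]-\mathbb{E}_{\bm n^{(j_2,\ldots,j_k)}}[S_{\bm n}(h)]$ into increments $P^{(1)}_{\cdot,i}$ of a \emph{one-parameter} martingale in a single direction (this martingale property holds by construction, with no assumption on $h$ beyond adaptedness). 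Doob's inequality is then applied conditionally in that one direction, Minkowski over all the $h_{\bm u}$, and the only ergodic input is the one-parameter Dunford--Schwartz theorem applied to $(P^{(1)}_{\cdot,0}(h_{\bm u}))^2\circ T_1^i$, valid under plain integrability; the weighted series \eqref{higher_moment_2_h} then kills the resulting sums after splitting them, via a cutoff $c$, into a finite block and a tail. In short, the paper trades your ``all directions at once'' maximal inequality for ``one direction at a time plus the $\ell^1$ hypothesis,'' which keeps the whole argument inside one-parameter $L^1$ ergodic theory and is exactly what makes the bare $L^2$ assumption sufficient.
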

		We delay the proof of this lemma to later in this section.
		\\
		\\
		Remark that the proof of Proposition 4.1 in \cite{MR3264437} can be easily adapted to the case of Orlicz spaces; so that for some fixed $n \in \mathbb{N}^*$, we get the following martingale-coboundary decomposition
		\[                                                                          
		X_{\bm{0}}^{(n)} = \sum_{S \subset \llbracket 1,d \rrbracket}h_S^{(n)} \circ \prod_{j \in S^c}(I-T_j),
		\]
		where $h_S^{(n)} \in \bigcap_{i\in S}{\Bigl(L^2\log^{d-1}L\bigl(\mathcal{F}_{0}^{(i)}\bigr) \ominus L^2\log^{d-1}L\bigl(\mathcal{F}_{-1}^{(i)}\bigr)\Bigr)}$ for all $S \subset \llbracket 1,d \rrbracket$ and   using the convention $\prod _{j \in \emptyset}{(I-T_j)} = I$.
		Moreover 
		\[
		h_{\llbracket  1,d\rrbracket}^{(n)} = \sum_{\bm{i} \in \mathbb{Z}^d}{\mathcal{P}_{\bm{0}}\bigl(X_{\bm{0}}^{(n)} \circ T^{\bm{i}}\bigr)}.
		\]
		According to the proof of Remark 11 in \cite{MR4125956} (see also the proof of Theorem 7 in the same article), the  following almost-sure convergence
		\begin{equation}\label{approx_mart}
			\mathbb{P}^{\omega}\biggl(\max_{\bm{1} \le \bm{m} \le \bm{N}}\frac{1}{|\bm{N}|}\Bigl|S_{\bm{m}}\bigl(X_{\bm{0}}^{(n)} - d_n\bigr)\Bigr|^2 \ge \epsilon\biggr) \converge{\bm{N}}{\infty}{\textrm{a.s.}} 0
		\end{equation}
		holds for all $\epsilon > 0$, where $d_n = h_{\llbracket  1,d\rrbracket}^{(n)}$.
		Moreover, letting $\bm{N} \in (\mathbb{N}^*)^d$ and $D_0 = \sum_{\bm{i} \in \mathbb{Z}^d}\mathcal{P}_{\bm{0}}(X_{\bm{i}})$, we get
		\[
		S_{\bm{N}}(D_0 - d_n) = \sum_{\bm{1} \le \bm{i} \le \bm{N}}\bigl(D_{\bm{i}} -D_{{\bm{i}}}^{(n)}\bigr),
		\]
		where
		\[
		D_{\bm{i}}= \sum_{\bm{j} \in \mathbb{Z}^d}{\mathcal{P}_{\bm{i}}\bigl(X_{\bm{i} - \bm{j}}\bigr)} \quad \mathrm{and}\quad D_{\bm{i}}^{(n)} = \sum_{\bm{j} \in \llbracket -n,0\rrbracket^d}{\mathcal{P}_{\bm{i}}\bigl(X_{\bm{i} - \bm{j}}\bigr)}.
		\] 
		Hence, given that $\bigl(D_{\bm{i}} - D_{\bm{i}}^{(n)}\bigr)_{\bm{i} \in \Z^d}$ is an ortho-martingale difference field and according to Cairoli's inequality, we have
		\[
		\mathbb{P}^{\omega}\Biggl(\frac{1}{\sqrt{|\bm{N}|}}\max_{\bm{1} \le \bm{i} \le \bm{N}}\bigl|S_{\bm{i}}(D_0 - d_n)\bigr| > \epsilon\Biggr)  \le \frac{2^{2d}}{\epsilon^2|\bm{N}|}\sum_{\bm{1} \le \bm{i} \le \bm{N}}\mathbb{E}_{\bm{0}}\biggl[\Bigl(D_{\bm{i}} - D_{\bm{i}}^{(n)}\Bigr)^2\biggr].
		\]
		Let us note that
		\[
		\sqrt{\frac{1}{|\bm{N}|}\sum_{\bm{1} \le \bm{i} \le \bm{N}}\mathbb{E}_{\bm{0}}\biggl[\Bigl(D_{\bm{i}} - D_{\bm{i}}^{(n)}\Bigr)^2\biggr]} \le \sum_{\bm{j} \not \in \llbracket-n,0\rrbracket^d}\sqrt{\frac{1}{|\bm{N}|}\sum_{\bm{1} \le \bm{i} \le \bm{N}}\mathbb{E}_{\bm{0}}\Bigl[\bigl(\mathcal{P}_{\bm{0}}(X_{-\bm{j}})\bigr)^{2}\circ T^{\bm{i}}\Bigr]}.
		\]
		According to the ergodic Theorem 1.1 of Chapter 6 in \cite{MR0797411} for Dunford Schwartz operators and Lemma 7.1 in \cite{MR3224292}, we have the convergence
		\[
		\lim\limits_{\bm{N} \to \infty}\frac{1}{|\bm{N}|}\sum_{\bm{1} \le \bm{i} \le \bm{N}}\mathbb{E}_{\bm{0}}\Bigl[\bigl(\mathcal{P}_{\bm{0}}(X_{-\bm{j}})\bigr)^{2}\circ T^{\bm{i}}\Bigr] = \mathbb{E}\Bigl[\bigl(\mathcal{P}_{\bm{j}}(X_{\bm{0}})\bigr)^{2}\Bigr]\ \ \ \ \mathrm{a.s.}
		\]
		for all $\bm{j} \not \in\llbracket-n,0\rrbracket^d$. Since $\sum_{\bm{j} \ge \bm{0}}{\|\mathcal{P}_{\bm{0}}(X_{\bm{j}})\|}_2 < \infty$, we get
		\begin{equation}\label{approx_mn}
			\lim\limits_{n \to \infty}\lim\limits_{\bm{N} \to \infty}\frac{2^{2d}}{\epsilon^2|\bm{N}|}\sum_{\bm{1} \le \bm{i} \le \bm{N}}\mathbb{E}_{\bm{0}}\biggl[\Bigl(D_{\bm{i}} - D_{\bm{i}}^{(n)}\Bigr)^2\biggr] = 0 \ \ \ \ \mathrm{a.s.}
		\end{equation}
		\\
		\\
		Combining \eqref{approx_fn}, \eqref{conv_rest},\eqref{approx_mart} and \eqref{approx_mn}, we obtain that for all $\epsilon > 0$,
		\[
		\limsup\limits_{\bm{N} \to \infty} \mathbb{P}^{\omega}\Biggl(\frac{1}{\sqrt{|\bm{N}|}}\max_{\bm{1} \le \bm{m} \le \bm{N}}\bigl|\bar{S}_{\bm{m}} - S_{\bm{m}}(D_0)\bigr| \ge \epsilon\Biggr)  = 0\ \ \ \ \mathrm{a.s.}
		\]
		We conclude by noticing that the field $(D_0 \circ T^{\bm{i}})_{\bm{i} \in \mathbb{Z}^d}$ satisfies a functional central limit theorem (according to Theorem 10 in \citealp{MR4125956}) and therefore the expected result is obtained by applying Theorem 3.1 in \cite{MR0293706}.
	\end{proof}
	
	\begin{proof}[Proof of the Theorem~\ref{main result square}]
		The proof of this theorem is very similar to the previous one, with the exception of using Theorem 2.8 instead of Theorem 1.1 of Chapter 6 in \cite{MR0797411} and Lemma 1.4 in the same Chapter (applied to the abstract maximal operator $Mf := \sup_{n \in \mathbb{N}}{\frac{1}{n^d}\sum_{\bm{1} \le \bm{i} \le n\bm{1}}|f| \circ T^{\bm{i}}}$, see Definition 1.3 of Chapter 6 and Corollary 2.2 of Chapter 1 in \citealp{MR0797411}) instead of Corollary 1.7 in order to obtain the $L^2$ versions of lemma \ref{lemme_int} and the \hyperref[lem:main]{Main Lemma} mentioned below.
	\end{proof}
	\begin{lemma}[$L^2$ version of the Main Lemma \ref{lem:main}]
		For any function $h \in L^2(\mathcal{F}_{\bm{0}})$ satisfying the following condition:
		\begin{equation}\label{hannan_2'}
			\sum_{\bm{u} \ge \bm{0}}{\|\mathcal{P}_{\bm{0}}(h_{\bm{u}})\|}_{2} < \infty,
		\end{equation} there exist an integrable function $g$ such that for all $N \in \mathbb{N}^*$,
		\[
		\sqrt{\mathbb{E}_{\bm{0}}\biggl[\max_{1 \le n \le N}\frac{1}{n^{d}}\bigl|\overline{S}_{n\bm{1}}(h)\bigr|^2\biggr]} \le g \quad \mathbb{P}-\mathrm{a.s}.
		\]
	\end{lemma}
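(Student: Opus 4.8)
The plan is to reproduce, almost verbatim, the two-step argument used for the main lemma (first the intermediary Lemma \ref{lemme_int}, then the projective/Cairoli computation), but with the Orlicz norm $\norm{\cdot}_{\Phi_d}$ replaced by $\norm{\cdot}_2$ and the genuinely multiparameter rectangular maximal operator replaced by the one-parameter cubic maximal operator $Mf=\sup_{n\in\mathbb{N}}\frac{1}{n^d}\sum_{\bm{1}\le\bm{i}\le n\bm{1}}|f|\circ T^{\bm{i}}$. The only structural difference is that, since the summation is now restricted to cubes $n\bm{1}$, the averages appearing after Cairoli's inequality are cubic averages governed by a single scaling parameter; this is exactly what lets one dispense with the $(\log)^{d-1}$ weight and work in $L^2$ rather than in $L^2\log^{d-1}L$.

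First I would establish the $L^2$ analogue of Lemma \ref{lemme_int}: there is a constant $C>0$ with $\norm{\sqrt{M(|\mathcal{P}_{\bm{0}}(h_{\bm{u}})|^2)}}_1\le C\norm{\mathcal{P}_{\bm{0}}(h_{\bm{u}})}_2$ for every $\bm{u}$. Each cube average $\frac{1}{n^d}\sum_{\bm{1}\le\bm{i}\le n\bm{1}}(\cdot)\circ T^{\bm{i}}$ is an average of the measure-preserving, hence Dunford--Schwartz, operators $T^{\bm{i}}$, so $M$ obeys the Hopf--Dunford--Schwartz weak-$(1,1)$ maximal inequality (Theorem 2.8 and Lemma 1.4 of Chapter 6 in Krengel \cite{Krengel}). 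Applied to $g=|\mathcal{P}_{\bm{0}}(h_{\bm{u}})|^2\in L^1$ this yields $\mathbb{P}(Mg>t^2)\le\min(1,\,C\norm{g}_1 t^{-2})$, and integrating the tail gives
\[
\norm{\sqrt{Mg}}_1=\int_0^\infty\mathbb{P}(Mg>t^2)\,\mathrm{d}t\le\int_0^\infty\min\!\left(1,\,C\norm{g}_1 t^{-2}\right)\mathrm{d}t=2\sqrt{C}\,\norm{\mathcal{P}_{\bm{0}}(h_{\bm{u}})}_2,
\]
which is the claimed bound. This plays the role of the split at $t_0=\norm{\mathcal{P}_{\bm{0}}(h_{\bm{u}})}_{\Phi_d}$ carried out in the proof of Lemma \ref{lemme_int}, the Orlicz weak-type inequality being replaced by the plain weak-$(1,1)$ one.

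Next I would run the projective decomposition exactly as in the main lemma, writing $\overline{S}_{n\bm{1}}(h)=\sum_{\bm{0}\le\bm{u}\le n\bm{1}-\bm{1}}\sum_{\bm{1}\le\bm{i}\le n\bm{1}-\bm{u}}\mathcal{P}_{\bm{0}}(h_{\bm{u}})\circ T^{\bm{i}}$ and dominating each inner partial sum, which is an ortho-martingale, via Cairoli's inequality \cite{C69} by its value over the full cube $N\bm{1}$. Conditional orthogonality then produces $\sum_{\bm{1}\le\bm{i}\le N\bm{1}}\mathbb{E}_{\bm{0}}[(\mathcal{P}_{\bm{0}}(h_{\bm{u}}))^2\circ T^{\bm{i}}]$, and since $\frac{1}{N^d}\sum_{\bm{1}\le\bm{i}\le N\bm{1}}(\mathcal{P}_{\bm{0}}(h_{\bm{u}}))^2\circ T^{\bm{i}}\le M(|\mathcal{P}_{\bm{0}}(h_{\bm{u}})|^2)$ the cube sum is controlled by $N^d$ times the cubic maximal function. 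Following the normalization bookkeeping of \eqref{lemme_princip} one arrives at the cube analogue
\[
\sqrt{\mathbb{E}_{\bm{0}}\left[\max_{1\le n\le N}\frac{1}{n^d}|\overline{S}_{n\bm{1}}(h)|^2\right]}\le 2^d\sum_{\bm{u}\ge\bm{0}}\sqrt{M(|\mathcal{P}_{\bm{0}}(h_{\bm{u}})|^2)},
\]
with a right-hand side independent of $N$. Taking $\norm{\cdot}_1$ and combining the $L^2$ intermediary bound with the Hannan-type hypothesis \eqref{hannan_2'} shows this right-hand side is integrable, so $g:=2^d\sum_{\bm{u}\ge\bm{0}}\sqrt{M(|\mathcal{P}_{\bm{0}}(h_{\bm{u}})|^2)}$ serves as the desired dominating function.

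The main obstacle is conceptual rather than computational: one must verify that restricting to cubes genuinely collapses the problem to a single-parameter maximal operator, so that the sharp weak-$(1,1)$ inequality, and hence the clean $L^2$ hypothesis, becomes available, whereas over rectangles the intrinsically $d$-parameter maximal function forces the $\log^{d-1}$ loss and the $\Phi_d$ condition of the main lemma. Once $M$ and its weak-$(1,1)$ bound are in hand, every remaining ingredient, namely the projective decomposition, Cairoli's inequality, orthogonality, and the tail integration, is identical to the arguments already established above.
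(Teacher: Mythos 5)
Your proposal is correct and takes essentially the same route as the paper: the paper's own proof of this lemma consists precisely in rerunning the main lemma's projective decomposition, Cairoli inequality and orthogonality argument with the one-parameter cubic maximal operator $Mf=\sup_{n\in\mathbb{N}}\frac{1}{n^{d}}\sum_{\bm{1}\le\bm{i}\le n\bm{1}}|f|\circ T^{\bm{i}}$ and the weak-$(1,1)$ maximal inequality for it (Krengel, Chapter 6, Lemma 1.4 and Definition 1.3, together with Chapter 1, Corollary 2.2, and Theorem 2.8 in place of Theorem 1.1) substituted for the $L\log^{d-1}L$ weak-type inequality of Corollary 1.7. Your tail-integration derivation of the $L^{2}$ intermediary bound and the subsequent normalization bookkeeping match the paper's prescription, merely spelling out details the paper leaves implicit.
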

	\begin{lemma}[$L^2$ version of Lemma \ref{lemme_int}]
		For all functions $h \in L^2$, there exists a constant $C>0$ such that for all $\bm{u} \in \mathbb{Z}^d$, we have
		\[
		{\Biggl\|\sqrt{\Bigl(\bigl|\mathcal{P}_{\bm{0}}(h_{\bm{u}})\bigr|^2\Bigr)^{\star}}\Biggr\| }_1 \le C{\|\mathcal{P}_{\bm{0}}(h_{\bm{u}})\|}_{2},
		\]
		where $h^{\star} = \sup_{n \in \mathbb{N}^*}{\frac{1}{n^d}}\sum_{\bm{1} \le \bm{i} \le n\bm{1}}|h|\circ T^{\bm{i}}$.
	\end{lemma}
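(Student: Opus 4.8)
The plan is to reproduce, almost line by line, the proof of Lemma \ref{lemme_int}, replacing the $L\log^{d-1}L$ maximal inequality of Krengel (Corollary 1.7 of Chapter 6) by the ordinary weak type $(1,1)$ maximal inequality. The point is that the cube-averaging operator $h^{\star} = \sup_{n}\frac{1}{n^d}\sum_{\bm{1}\le\bm{i}\le n\bm{1}}|h|\circ T^{\bm{i}}$ is a genuine one-parameter Dunford--Schwartz average (Definition 1.3 of Chapter 6, together with Corollary 2.2 of Chapter 1 in Krengel \cite{Krengel}), so no logarithmic weight is incurred and the Orlicz computation collapses to an elementary $L^2$ estimate.

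First I would fix $\bm{u}\in\mathbb{Z}^d$, abbreviate $g = \mathcal{P}_{\bm{0}}(h_{\bm{u}})\in L^2$ and $\phi = g^2 \in L^1$, and rewrite the left-hand side through the layer-cake formula
\[
\norm{\sqrt{\phi^{\star}}}_1 = \int_0^{\infty}\mathbb{P}\left(\phi^{\star} > t^2\right)\mathrm{d}t.
\]
The key input is then a weak type $(1,1)$ bound for $\phi^{\star}$ with the integral restricted to a large set. I would obtain it by truncation: splitting $\phi = \phi\,\mathbf{1}_{\{\phi\le t^2/2\}} + \phi\,\mathbf{1}_{\{\phi> t^2/2\}}$ and observing that the cube averages of the bounded part never exceed $t^2/2$, so that $\{\phi^{\star}>t^2\}\subseteq\{(\phi\,\mathbf{1}_{\{\phi>t^2/2\}})^{\star}>t^2/2\}$, the Dunford--Schwartz maximal inequality applied to the truncated function yields a constant $C>0$, independent of $\bm{u}$ and $t$, such that
\[
\mathbb{P}\left(\phi^{\star} > t^2\right) \le \frac{C}{t^2}\int_{\{2g^2 > t^2\}} g^2\,\mathrm{d}\mathbb{P}.
\]

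It then remains to integrate. Setting $t_0 = \norm{g}_2$, I would bound the probability by $1$ on $(0,t_0)$ and insert the above estimate on $(t_0,\infty)$, exchanging the order of integration by Fubini--Tonelli:
\[
\int_{t_0}^{\infty}\frac{C}{t^2}\int_{\{2g^2>t^2\}} g^2\,\mathrm{d}\mathbb{P}\,\mathrm{d}t = C\int_{\Omega} g^2\int_{t_0}^{\sqrt{2}\,|g|}\frac{\mathrm{d}t}{t^2}\,\mathrm{d}\mathbb{P}\le \frac{C}{t_0}\int_{\Omega} g^2\,\mathrm{d}\mathbb{P} = \frac{C\norm{g}_2^2}{t_0}.
\]
With $t_0 = \norm{g}_2$ the right-hand side equals $C\norm{g}_2$, so that $\norm{\sqrt{\phi^{\star}}}_1 \le (1+C)\norm{g}_2$, which is the claim with constant $C' = 1+C$.

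The one genuinely delicate point is establishing the weak type $(1,1)$ inequality in the displayed form, with the integral over the large set $\{2g^2>t^2\}$ rather than over all of $\Omega$; this localization is exactly what allows the subsequent Fubini step to reproduce $\norm{g}_2$ instead of diverging. It plays here the same role that the $L\log^{d-1}L$ bound of Krengel plays (with its logarithmic weight) in the proof of Lemma \ref{lemme_int}, and it is the substitution the authors allude to in the proof of Theorem \ref{main result square}.
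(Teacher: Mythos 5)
Your proposal is correct and follows essentially the same route the paper sketches: the paper never writes this proof out, saying only (in the proof of Theorem \ref{main result square}) that one repeats the argument of Lemma \ref{lemme_int} with the weak type $(1,1)$ maximal inequality for the cube maximal operator (Krengel, Lemma 1.4 of Chapter 6, via Definition 1.3 of Chapter 6 and Corollary 2.2 of Chapter 1) in place of the $L\log^{d-1}L$ inequality of Corollary 1.7, which is exactly your substitution followed by the same layer-cake integration. One small correction to your closing remark: the localization of the weak type bound to $\{2g^2>t^2\}$ is not actually needed here, since the global bound $\mathbb{P}(\phi^{\star}>t^2)\le Ct^{-2}\norm{g}_2^2$ already integrates over $(t_0,\infty)$ to $C\norm{g}_2^2/t_0 = C\norm{g}_2$ (the $1/t^2$ tail is integrable, unlike the $1/t$ tail in the genuine $L\log L$ situation, which is where truncation is indispensable).
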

	The following proof of Corollary \ref{Cor_FCLT_d} can be easily adapted to obtain Corollary \ref{Cor_FCLT_d_square} by using the Theorem \ref{main result square} instead of Theorem \ref{main result}.
	\\
	
	\begin{proof}[Proof of Corollary \ref{Cor_FCLT_d}]
		According to Theorem \ref{main result} and Theorem 3.1 in \cite{MR0293706}, it is enough to show
		\[
		\frac{1}{|\bm{n}|}\mathbb{E}_{\bm{0}}\biggl[\max _{\bm{1} \le \bm{m} \le \bm{n}}R_{\bm{m}}^2\biggr] \converge{\bm{n}}{\infty}{\textrm{a.s.}} 0.
		\]
		Let $\bm{m} \in \mathbb{Z}^d$, and recall that
		\[
		R_{\bm{m}} = \sum_{i=1}^{d}(-1)^{i-1}\sum_{1 \le j_1 < \cdots < j_i \le d}\mathbb{E}_{{\bm{m}}^{(j_1,\cdots,j_i)}}[S_{\bm{m}}],
		\]
		where $\bm{m}^{(j_1,\ldots, j_i)}$ is the multi-index such that the $j_k$-th, $1 \le k \le i$ coordinates are zero and the others are equal to the corresponding coordinates of $\bm{m}$.
		\\
		\\
		Using the triangle inequality, it is enough to prove that the property
		\begin{equation}\label{HR}\tag*{\textrm{$P(j_1, \ldots, j_{i})$}}
			\frac{1}{|\bm{n}|}\mathbb{E}_{\bm{0}}\biggl[\max _{\bm{1} \le \bm{m} \le \bm{n}}\bigl(\mathbb{E}_{{\bm{m}}^{(j_1,\cdots,j_i)}}[S_{\bm{m}}]\bigr)^2\biggr] \converge{\bm{n}}{\infty}{\textrm{a.s.}} 0
		\end{equation}
		holds for any $j_1 < \cdots < j_i, 1 \le i \le d$. We establish that via induction on $i$.
		\\
		\\
		The terms satisfying $i=1$ have this property according to the hypothesis of the corollary. By induction, the corollary will be proven if we can show that if property \ref{HR} is verified for all $j_1 < \cdots < j_i$ for some $i<d$, then $P(j_1, \ldots, j_{i+1})$ also holds for all $j_1 < \cdots < j_{i+1}$. For the sake of simplicity and without loss of generality, we will only establish that 
		\[
		\forall j \in \llbracket1,d\rrbracket, P(j) \quad \Longrightarrow \quad P(1,2).
		\]
		In other words, we use the hypothesis of the corollary to show
		\[
		\frac{1}{|\bm{n}|}\mathbb{E}_{\bm{0}}\biggl[\max _{\bm{1} \le \bm{m} \le \bm{n}}\bigl(\mathbb{E}_{\bm{m}^{(1,2)}}[S_{\bm{m}}]\bigr)^2\biggr] \converge{\bm{n}}{\infty}{\textrm{a.s.}} 0.
		\]
		According to Jensen's inequality, we have
		\[
		\mathbb{E}_{\bm{0}}\biggl[\max _{\bm{1} \le \bm{m} \le \bm{n}}\bigl(\mathbb{E}_{\bm{m}^{(1,2)}}[S_{\bm{m}}]\bigr)^2\biggr] = \mathbb{E}_{\bm{0}}\biggl[\max _{\bm{1} \le \bm{m} \le \bm{n}}\bigl(\mathbb{E}_{\bm{m}^{(1,2)}}\bigl[\mathbb{E}_{\bm{m}^{(1)}}[S_{\bm{m}}]\bigr]\bigr)^2\biggr] \le \mathbb{E}_{\bm{0}}\biggl[\max _{\bm{1} \le \bm{m} \le \bm{n}}\bigl(\mathbb{E}_{\bm{m}^{(1)}}[S_{\bm{m}}]\bigr)^2\biggr].
		\]
		So
		\[
		\frac{1}{|\bm{n}|}\mathbb{E}_{\bm{0}}\biggl[\max _{\bm{1} \le \bm{m} \le \bm{n}}\bigl(\mathbb{E}_{\bm{m}^{(1,2)}}[S_{\bm{m}}]\bigr)^2\biggr] \le \frac{1}{|\bm{n}|}\mathbb{E}_{\bm{0}}\biggl[\max _{\bm{1} \le \bm{m} \le \bm{n}}\bigl(\mathbb{E}_{\bm{m}^{(1)}}[S_{\bm{m}}]\bigr)^2\biggr] \converge{\bm{n}}{\infty}{\textrm{a.s.}} 0.
		\]
	\end{proof}
	
	Before continuing with the proof, we establish Lemma \ref{fact4}.

	\begin{proof} [Proof of Lemma~\ref{fact4}]
		We show that for any $i \in \llbracket1,d\rrbracket$ and any $1 \le j_1 < \cdots < j_i \le d$, we have the convergence 
		\[
		\frac{1}{|\bm{N}|}\mathbb{E}_{\bm{0}}\biggl[\max_{\bm{1} \le \bm{n} \le \bm{N}}\bigl(\mathbb{E}_{\bm{n}^{(j_1, \ldots, j_i)}}[S_{\bm{n}}(h)]\bigr)^2\biggr] \converge{\bm{N}}{\infty}{\textrm{a.s.}}0.
		\]
		In order to do so, we use an induction on $k = d-i$ with $d$ fixed. For $k = 0$ and in the same way as in the proof of Lemma 3.2 in \cite{MR4166203} (see also the proof of Theorem 4.4 in the same article), we establish that
		\[
		\frac{\bigl(\mathbb{E}_{\bm{0}}[S_{\bm{n}}(h)]\bigr)^2}{|\bm{n}|} \converge{\bm{n}}{\infty}{\textrm{a.s.}} 0.
		\]
		Hence
		\[
		\frac{1}{|\bm{N}|} \max _{\bm{1} \le \bm{n} \le \bm{N}} \bigl(\mathbb{E}_{\bm{0}}[S_{\bm{n}}(h)]\bigr)^2 \converge{\bm{N}}{\infty}{\textrm{a.s.}} 0.
		\]
		Now, we suppose that the desired property holds for some $k-1 < d-1$. Without loss of generality, we establish the property only for $(j_1, \ldots, j_{k}) = ( 1,\ldots, k)$; it is enough to show that
		\[
		\frac{1}{|\bm{N}|}\mathbb{E}_{\bm{0}}\biggl[\max_{\bm{1} \le \bm{n} \le \bm{N}}\bigl(  \mathbb{E}_{\bm{n}^{(j_1, \ldots, j_{k})}}[S_{\bm{n}}(h)]-\mathbb{E}_{\bm{n}^{(j_2, \ldots, j_{k})}}[S_{\bm{n}}(h)]\bigr)^{2}\biggr]\converge{\bm{N}}{\infty}{\textrm{a.s.}}0.
		\]
		Let $\bm{n}, \bm{N} \in (\mathbb{N}^*)^d$ such that $\bm{n} \le \bm{N}$, then the following decomposition holds 
		\[
		\mathbb{E}_{\bm{0}}\biggl[\max_{\bm{1} \le \bm{n} \le \bm{N}}\bigl(  \mathbb{E}_{\bm{n}^{(j_1, \ldots, j_{k})}}[S_{\bm{n}}(h)]-\mathbb{E}_{\bm{n}^{(j_2, \ldots, j_{k})}}[S_{\bm{n}}(h)]\bigr)^{2}\biggr] = \mathbb{E}_{\bm{0}}\Biggl[\max_{\bm{1} \le \bm{n} \le \bm{N}}\Biggl( \sum_{i=1}^{n_{1}}P_{\bm{n}^{(j_1, \ldots, j_k)},i}^{(1)}(S_{\bm{n}}(h)) \Biggr)^{2}\Biggr],
		\]
		where $P_{\bm{n}^{(j_1, \ldots, j_k)}, i}^{(1)}(S_{\bm{n}}(h)) = \mathbb{E}_{i\bm{e}_{1} + \bm{n}^{(j_1, \ldots, j_k)}}[S_{\bm{n}}(h)] - \mathbb{E}_{(i-1)\bm{e}_{1}+ \bm{n}^{(j_1, \ldots, j_k)}}[S_{\bm{n}}(h)]$ and $\bm{e}_{1}$ is the multi-index whose coordinates are all zero except for the first one which is $1$.
		\\
		\\
		Since $h$ is $\mathcal{F}_{\bm{0}}$-measurable, we have
		\[
		\Biggl|\sum_{i=1}^{n_{1}}P_{\bm{n}^{(j_1, \ldots, j_k)},i}^{(1)}(S_{\bm{n}}(h))\Biggr| \le \sum_{\bm{1} \le \bm{u} \le \bm{N}}\max_{1\leq k\leq N_{1}} \Biggl|\sum_{i=1}^{k}P_{\bm{u}^{(j_1, \ldots, j_k)},i}^{(1)}(h_{\bm{u}})\Biggr|.
		\]
		Therefore, according to Doob's inequality for martingales, it follows
		\[
		\sqrt{\mathbb{E}_{\bm{0}}\Biggl[\max_{\bm{1} \le \bm{n} \le \bm{N}}\Biggl( \sum_{i=1}^{\bm{n}_{1}}P_{\bm{n}^{(j_1, \ldots, j_k)},i}^{(1)}(S_{\bm{n}}(h)) \Biggr)^{2}\Biggr]} \le 2\sum_{\bm{1} \le \bm{u} \le \bm{N}}\sqrt{\mathbb{E}_{\bm{0}}\Biggl[\Biggl(\sum_{i=1}^{N_1}P_{\bm{u}^{(j_1, \ldots, j_k)},i}^{(1)}(h_{\bm{u}})\Biggr)^2\Biggr]}.
		\]
		Let $c > 0$, we use the following decomposition
		\[
		\frac{1}{\sqrt{|\bm{N}|}}\sum_{\bm{1} \le \bm{u} \le \bm{N}}\sqrt{\mathbb{E}_{\bm{0}}\Biggl[\Biggl(\sum_{i=1}^{N_1}P_{\bm{u}^{(j_1, \ldots, j_k)},i}^{(1)}(h_{\bm{u}})\Biggr)^2\Biggr]} =: \mathrm{I}_{\bm{N},c} + \mathrm{II}_{\bm{N},c}
		\]
		where 
		\[
		\mathrm{I}_{\bm{N},c} = \frac{1}{\sqrt{|\bm{N}|}}\sum_{1 \le u_1 \le N_1}\sum_{1 \le u_2,\ldots u_d \le c}\sqrt{\mathbb{E}_{\bm{0}}\Biggl[\Biggl(\sum_{i=1}^{N_1}P_{\bm{u}^{(j_1, \ldots, j_k)},i}^{(1)}(h_{\bm{u}})\Biggr)^2\Biggr]}
		\]
		and $\mathrm{II}_{\bm{N},c}$ is the remainder of the initial sum.
		\\
		\\
		Let us show that
		\[
		\limsup\limits_{\bm{N} \to \infty} \mathrm{I}_{\bm{N},c} = 0 \ \ \ \ \mathrm{a.s.}
		\] 
		Indeed, by an orthogonality argument
		\[
		\mathrm{I}_{\bm{N},c} \le \frac{c^{d-1}}{\sqrt{|\bm{N}|}}\sup_{1 \le u_2,\ldots, u_d \le c}\sum_{u_1 \ge 0}\sqrt{\sum_{i=1}^{N_1}\mathbb{E}_{\bm{0}}\biggl[\Bigl(P_{\bm{u}^{(j_1, \ldots, j_k)},0}^{(1)}(h_{\bm{u}})\Bigr)^2\circ T_1^i\biggr]}.
		\]
		However, according to Lemma 7.1 in \cite{MR3224292}, we get the convergence
		\[
		\lim\limits_{N_1 \to \infty}\frac{1}{N_1}\sum_{1 \le i \le N_1}\mathbb{E}_{\bm{0}}\biggl[\Bigl(P_{{\bm{u}}^{(j_1, \ldots, j_k)},0}^{(1)}(h_{\bm{u}})\Bigr)^2 \circ T_1^i\biggl] = \mathbb{E}\biggl[\Bigl(P_{\bm{u}^{(j_1,\ldots,j_k)}, 0}^{(1)}(h_{\bm{u}})\Bigr)^2\biggm|\mathcal{I}_1\biggr] \quad \textrm{a.s.}
		\]
		where $\mathcal{I}_1$ is the invariant $\sigma$-algebra of the transformation $T_1$. Hence
		\[
		\limsup\limits_{\bm{N} \to \infty} \mathrm{I}_{\bm{N},c} = 0 \ \ \ \ \mathrm{a.s.}
		\]
		Each sum appearing in $\mathrm{II}_{\bm{N},c}$, admits at least one direction (different from the first one) for which the index is at least equal to $c+1$. Without loss of generality, we will only treat the case where the second direction has an index at least equal to $c+1$ and all other directions have the full range of indexes. Then
		\begin{align*}
			& \frac{1}{\sqrt{|\bm{N}|}}\sum_{1 \le u_1 \le N_1} \sum_{c+1 \le u_2 \le N_2}\sum_{1 \le u_3 \le N_3}\cdots\sum_{1 \le u_d \le N_d}\sqrt{\mathbb{E}_{\bm{0}}\Biggl[\Biggl(\sum_{i=1}^{N_1}P_{\bm{u}^{(j_1, \ldots, j_k)},i}^{(1)}(h_{\bm{u}})\Biggr)^2\Biggr]}\\
			& \qquad \qquad \le \sum_{u_2 \ge c+1}\sum_{u_1 \ge 0} \sum_{u_3 \ge 1}\cdots\sum_{u_d \ge 1}(u_2\cdots u_d)^{-\frac{1}{2}}\sqrt{\frac{1}{N_1}\sum_{i=1}^{N_1}\mathbb{E}_{\bm{0}}\biggl[\Bigl(P_{\bm{u}^{(j_1,\ldots,j_k)}, 0}^{(1)}(h_{\bm{u}})\Bigr)^2\circ T^i_1\biggr]}.
		\end{align*}
		Once again, applying Lemma 7.1 in \cite{MR3224292}, we obtain
		\[
		\limsup\limits_{\bm{N}\to \infty}\mathrm{II}_{\bm{N},c} \le \sum_{u_2 \ge c+1} \sum_{u_1 \ge 0}\sum_{u_2 \ge 1}\cdots\sum_{u_d \ge 1}\frac{\sqrt{\mathbb{E}\biggl[\Bigl(P_{\bm{u}^{(j_1,\ldots,j_k)}, 0}^{(1)}(h_{\bm{u}})\Bigr)^2\biggm|\mathcal{I}_1\biggr]}}{\sqrt{u_2\cdots u_d}}.
		\]
		Since \eqref{higher_moment_2_h}  implies (45) in \cite{MR4166203} (see the proof of Theorem 4.4), we get
		\[
		\lim\limits_{c \to\infty}\limsup\limits_{\bm{N} \to \infty} \mathrm{II}_{\bm{N},c} = 0\ \ \ \ \mathrm{a.s.}
		\]
		This concludes the proof.
	\end{proof}
	
	Corollaries~\ref{main result no centering} and~\ref{main result square no centering} are direct consequences of this lemma and the previous results.
	
	\begin{proof}[Proof of Corollary~\ref{main result no centering}]
		This theorem is a consequence of Lemma \ref{fact4} and Theorem \ref{main result}. 
	\end{proof}
	
	\begin{proof}[Proof of Corollary~\ref{main result square no centering}]
		The theorem is a consequence of Lemma \ref{fact4}, Theorem \ref{main result} and Lemma 3.3 in  \cite{MR4166203} (for $d > 2$, see Theorem 4.4 (a) and its proof in \citealp{MR4166203}) and Theorem \ref{main result square}.
	\end{proof}
	The rest of this section will be dedicated to proving Corollary~\ref{coro no centering}. We start by making a few remarks concerning the Luxemburg norms. Let us note that for $x\ge0$ and $0<\lambda\leq \mathrm{e}-1$,
	\begin{equation*}
		\log\Bigl(1+\frac{x}{\lambda}\Bigr)\log(1+\lambda)\leq \log(1+x).
	\end{equation*}
	
	Recall that the function $\Phi_d : [0,\infty) \to [0,\infty)$ is defined by 
	\begin{equation*}
		\Phi_d (x)=x^{2}(\log(1+x))^{d-1}
	\end{equation*} for all $x \in [0,\infty)$. Then, we deduce the
	following remarkable property of the function $\Phi_d$.\newline
	For $x>0$ and $0<\lambda\leq \mathrm{e}-1$,
	\begin{equation}\label{prop1 varphi}
		\Phi_d \Bigl(\frac{x}{\lambda}\Bigr)=\Bigl(\frac{x}{\lambda}\Bigr)^{2}\Bigl(\log\Bigl(1+\frac{x}{\lambda}\Bigr)\Bigr)^{d-1}\leq \frac{x^2(\log(1+x))^{d-1}}{\lambda^2(\log(1+\lambda))^{d-1}}= \frac{\Phi_d(x)}{\Phi_d (\lambda)}.
	\end{equation}
	
	Besides, since $\Phi_d$ is a convex function, we also have 
	
	\begin{equation}\label{prop2 varphi}
		\Phi_d \Bigl(\frac{x}{\lambda}\Bigr)=\Phi_d \Bigl(\frac{x}{\lambda}+\Bigl(1-\frac{1}{\lambda}\Bigr)\cdot 0\Bigr)\leq \frac{\Phi_d(x)}{\lambda}+\Bigl(1-\frac{1}{\lambda}\Bigr)\Phi_d(0)= \frac{\Phi_d(x)}{\lambda},
	\end{equation}
	for  $x\ge0$ and $\lambda\ge 1$.
	\\
	\\
	Obviously, the function $\Phi_d$ defined by \eqref{def_Phi} is bijective and we denote by $\Phi_d^{-1}$ its inverse function. The following lemma might be well-known but we could not find it in the
	literature. 
	
	\begin{lemma}\label{Lem tool} Let $X \in L^2\log^{d-1}L$. If $ \Phi_d ^{-1}\bigl(\mathbb{E}\bigl[\Phi_d (|X|)\bigr]\bigr)\leq \mathrm{e}-1$, then
		\begin{equation*}
			{\|X\|}_{\Phi_d }\leq \Phi_d ^{-1}\bigl(\mathbb{E}\bigl[\Phi_d (|X|)\bigr]\bigr),
		\end{equation*}%
		and if $\mathbb{E}\bigl[\Phi_d (|X|)\bigr]\ge1$, then 
		\begin{equation*}
			{\|X\|}_{\Phi_d }\leq \mathbb{E}\bigl[\Phi_d (|X|)\bigr].
		\end{equation*}%
	\end{lemma}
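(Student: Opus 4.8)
The plan is to bound the Luxemburg norm directly from its definition. The central observation is that in order to prove $\norm{X}_{\Phi_d}\le t^\ast$ for an explicit $t^\ast>0$, it suffices to check that $t^\ast$ belongs to the set over which the infimum defining $\norm{\cdot}_{\Phi_d}$ is taken, i.e. to verify the single inequality $\mathbb{E}\left[\Phi_d(|X|/t^\ast)\right]\le 1$. Both assertions will then follow by feeding the two scaling properties \eqref{prop1 varphi} and \eqref{prop2 varphi} of $\Phi_d$ into this criterion with an appropriate choice of the parameter $\lambda$, the first regime being governed by \eqref{prop1 varphi} and the second by \eqref{prop2 varphi}.

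For the first assertion, I would set $t^\ast=\Phi_d^{-1}\left(\mathbb{E}[\Phi_d(X)]\right)$ and apply \eqref{prop1 varphi} pointwise with $\lambda=t^\ast$. This is legitimate precisely because the hypothesis $\Phi_d^{-1}\left(\mathbb{E}[\Phi_d(X)]\right)\le \mathrm{e}-1$ guarantees $0<\lambda\le \mathrm{e}-1$. It yields $\Phi_d(|X|/t^\ast)\le \Phi_d(|X|)/\Phi_d(t^\ast)$ almost surely. Taking expectations, using that $\Phi_d$ is even so that $\Phi_d(X)=\Phi_d(|X|)$, and noting that $\Phi_d(t^\ast)=\mathbb{E}[\Phi_d(X)]$ by definition of $t^\ast$, one obtains $\mathbb{E}\left[\Phi_d(|X|/t^\ast)\right]\le 1$, hence $\norm{X}_{\Phi_d}\le t^\ast$, as required.

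For the second assertion, I would instead take $t^\ast=\mathbb{E}[\Phi_d(X)]$ and invoke the convexity-based bound \eqref{prop2 varphi} with $\lambda=t^\ast$, which is admissible since the hypothesis $\mathbb{E}[\Phi_d(X)]\ge 1$ ensures $\lambda\ge 1$. Then $\Phi_d(|X|/t^\ast)\le \Phi_d(|X|)/t^\ast$, and integrating gives $\mathbb{E}\left[\Phi_d(|X|/t^\ast)\right]\le \mathbb{E}[\Phi_d(X)]/t^\ast=1$, so that $\norm{X}_{\Phi_d}\le t^\ast=\mathbb{E}[\Phi_d(X)]$.

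I do not expect a serious obstacle here: once the two regimes are matched to the two previously established inequalities, the argument is essentially bookkeeping. The only points needing care are verifying that the range restriction on $\lambda$ in each scaling property (namely $\lambda\le \mathrm{e}-1$ in \eqref{prop1 varphi} and $\lambda\ge 1$ in \eqref{prop2 varphi}) is exactly what the corresponding hypothesis supplies, and recording the evenness of $\Phi_d$ so that $\mathbb{E}[\Phi_d(X)]$ may be replaced by $\mathbb{E}[\Phi_d(|X|)]$ when passing from the pointwise bounds to their integrated forms. The degenerate case $X=0$ almost surely, relevant only to the first assertion where it forces $\mathbb{E}[\Phi_d(X)]=0$, is trivial since then $\norm{X}_{\Phi_d}=0$.
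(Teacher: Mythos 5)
Your proof is correct and follows essentially the same route as the paper's: both verify, directly from the infimum definition of the Luxemburg norm, that the candidate bound $t^\ast$ satisfies $\mathbb{E}[\Phi_d(|X|/t^\ast)]\le 1$, using property \eqref{prop1 varphi} with $\lambda = \Phi_d^{-1}(\mathbb{E}[\Phi_d(X)])$ in the first regime and \eqref{prop2 varphi} with $\lambda = \mathbb{E}[\Phi_d(X)]$ in the second. The only difference is that you spell out the second case, which the paper dismisses as ``similar.''
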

	\begin{proof}[Proof of Lemma~\ref{Lem tool}]
		If $X = 0$ almost surely, then the property is evident. Else, suppose that $\mathbb{P}(X = 0) \not = 1$, and recall the definition of Luxembourg norm 
		\begin{equation*}
			{\|X\|}_{\Phi_d }=\inf \biggl\{\lambda>0 : \mathbb{E}\biggl[\Phi_d \biggl(\frac{|X|}{\lambda }\biggr)\biggr]\leq
			1\biggr\}
		\end{equation*}%
		Note that by the properties of $\Phi_d $ for any $0<\lambda \leq \mathrm{e}-1$ 
		\begin{equation*}
			\mathbb{E}\biggl[\Phi_d \biggl(\frac{|X|}{\lambda }\biggr)\biggr]\leq \frac{\mathbb{E}\bigl[\Phi_d (|X|)\bigr]}{\Phi_d (\lambda )}%
			.
		\end{equation*}%
		From this inequality it follows that if $\lambda $ is the solution to the
		equation $\mathbb{E}\bigl[\Phi_d (|X|)\bigr]=\Phi_d(\lambda)$, we have necessarily that $%
		\mathbb{E}\Bigl[\Phi_d \Bigl(\frac{|X|}{\lambda }\Bigr)\Bigr]\leq 1,$ and then ${\|X\|}_{\Phi_d }\leq\lambda = \Phi_d ^{-1}\bigl(\mathbb{E}\bigl[\Phi_d (|X|)\bigr]\bigr).$
		\\
		\\
		For the case $\mathbb{E}\bigl[\Phi_d (|X|)\bigr]> 1$, the proof is similar using property \eqref{prop2 varphi} of $\Phi_d$.
	\end{proof}

	\begin{lemma}
		\label{fact5}
		Condition~\eqref{higher moment phi} implies  $ \sum_{\bm{u}\geq\bm{0}}\lVert \mathcal{P}_{\bm{0}}(X_{\bm{u}})\rVert_{\Phi_d}<\infty$.
	\end{lemma}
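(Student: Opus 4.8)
The plan is to reproduce, in the Orlicz setting, the Cauchy-condensation argument that Zhang et al. \cite{ZRP} use for the $L^2$ implication (\ref{higher moment 2})$\Rightarrow$(\ref{hannan_2}), replacing Hilbert-space orthogonality throughout by the estimates recorded just above. I would first set $\beta_{\bm u} := \norm{\E_{\bm 1}[X_{\bm u}]}_{\Phi_d}$ and note two elementary facts. First, by conditional Jensen applied to the convex function $\Phi_d$, the map $\bm u\mapsto\beta_{\bm u}$ is non-increasing in each coordinate, so it is comparable to its value at the corner of any dyadic block; combined with stationarity this also gives $\norm{\E_{\bm 0}[X_{\bm u}]}_{\Phi_d}\le\beta_{\bm u}$. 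Second, since $\Phi_d(x)\ge x^2$ one has $\Phi_d^{-1}(t)\le\sqrt t$, hence $\Phi_d^{-1}(t)^2\le t$. These two facts alone let me pass between the series in (\ref{higher moment phi}) and its dyadic-block version: summing $\beta_{\bm u}/\Phi_d^{-1}(|\bm u|)$ over a product of dyadic intervals $[2^{k_i},2^{k_i+1})$ and using monotonicity together with $\Phi_d^{-1}(2^{|\bm k|})^2\le 2^{|\bm k|}$ shows that (\ref{higher moment phi}) is equivalent (up to a constant) to $\sum_{\bm k}\Phi_d^{-1}(2^{|\bm k|})\,\beta_{2^{\bm k}}<\infty$.

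The heart of the matter is then a single block estimate: for every dyadic block $B$, a product of intervals $[2^{k_i},2^{k_i+1})$ of total cardinality $|B|$ and origin-corner $\bm u_B$, I claim
\[
\sum_{\bm u\in B}\norm{\mathcal{P}_{\bm{0}}(X_{\bm u})}_{\Phi_d}\le C_d\,\Phi_d^{-1}(|B|)\,\norm{\E_{\bm{0}}[X_{\bm u_B}]}_{\Phi_d}.
\]
Granting this, bounding the right-hand side by $C_d\,\Phi_d^{-1}(|B|)\,\beta_{\bm u_B}$ and summing over the blocks reduces the target $\sum_{\bm u\ge\bm 0}\norm{\mathcal{P}_{\bm{0}}(X_{\bm u})}_{\Phi_d}<\infty$ to the condensed series above, which is finite by the first paragraph. (The finitely-indexed terms near the boundary, where some $u_i$ is small, are absorbed by the corresponding lower-dimensional version of the same estimate, so I would concentrate on $\bm u\ge\bm 1$.)

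To prove the block estimate I would peel one coordinate at a time, exploiting that the filtration commutes so that $\mathcal{P}_{\bm{0}}=\prod_{i=1}^{d}\bigl(\E_{\bm{0}}-\E_{\bm{0}-\bm{e}_i}\bigr)$ factorizes and each factor is a martingale-difference projection in direction $i$. Fixing the other coordinates, the sum over a dyadic interval in direction $i$ of the single projection unfolds into a telescoping sum of one-dimensional martingale differences, and I would control its $\Phi_d$-norm by the Rosenthal-type (Burkholder) inequality for $\norm{\cdot}_{\Phi_d}$ established in Section 5, used in tandem with the comparison between $\norm{\cdot}_{\Phi_d}$ and $\E[\Phi_d(\cdot)]$ from Lemma \ref{Lem tool} and the convexity estimates \eqref{prop1 varphi}--\eqref{prop2 varphi}. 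Each such step produces one factor $\Phi_d^{-1}(2^{k_i})$ and collapses the interval to its left endpoint; after $d$ steps the $d$ logarithmic gains assemble into the single factor $\Phi_d^{-1}(|B|)$ (this is exactly where the $(\log)^{d-1}$ in $\Phi_d$ is consumed, one logarithm per ortho-martingale direction) and the endpoints collapse to $\bm u_B$.

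The main obstacle is precisely this block estimate. In the $L^2$ proof it is immediate from the Pythagorean identity $\norm{\mathcal{P}_{\bm{0}}(X_{\bm u})}_2^2=\norm{\E_{\bm{0}}[X_{\bm u}]}_2^2-\norm{\E_{\bm{0}-\bm{e}_i}[X_{\bm u}]}_2^2$ followed by Cauchy--Schwarz over the block and telescoping, and neither the exact orthogonality nor the factor $\sqrt{|B|}$ survives verbatim. Replacing the Pythagorean decrement by the near-orthogonality furnished by the Section-5 Rosenthal inequality, and checking that the convexity and growth properties of $\Phi_d$ absorb the resulting constants \emph{uniformly} in the block and in the dimension (so that the per-coordinate logarithmic losses stack up to exactly $\Phi_d^{-1}(|B|)$ rather than something larger), is the delicate point; everything else — monotonicity of $\beta_{\bm u}$, the condensation, and the comparison $\Phi_d^{-1}(t)\le\sqrt t$ — is routine and furnishes the projective condition required by Theorem \ref{main result}.
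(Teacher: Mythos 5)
There is a genuine gap: the block estimate at the heart of your argument is false for $d\ge 2$, and the error is in a direction that constants cannot absorb. Concretely, you claim that for every dyadic block $B$ with corner $\bm{u}_B$,
\[
\sum_{\bm{u}\in B}\norm{\mathcal{P}_{\bm{0}}(X_{\bm{u}})}_{\Phi_d}\le C_d\,\Phi_d^{-1}(|B|)\,\norm{\E_{\bm{0}}[X_{\bm{u}_B}]}_{\Phi_d}.
\]
Test this on a linear field: let $(\varepsilon_{\bm{i}})_{\bm{i}\in\Z^d}$ be i.i.d.\ Rademacher variables with the commuting filtration they generate, and set $X_{\bm{0}}=\sum_{\bm{w}\in B}\varepsilon_{-\bm{w}}$. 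Then $\mathcal{P}_{\bm{0}}(X_{\bm{u}})=\varepsilon_{\bm{0}}$ for every $\bm{u}\in B$ (and vanishes otherwise), so the left-hand side equals $|B|\,\norm{\varepsilon_{\bm{0}}}_{\Phi_d}$; on the other hand $\E_{\bm{0}}[X_{\bm{u}_B}]$ is a sum of $|B|$ i.i.d.\ Rademacher variables, so $\norm{\E_{\bm{0}}[X_{\bm{u}_B}]}_{\Phi_d}\le c_d\sqrt{|B|}$. Since $\Phi_d^{-1}(t)\asymp \sqrt{t}\,(\log t)^{-(d-1)/2}$, your right-hand side is of order $|B|\,(\log |B|)^{-(d-1)/2}$, strictly smaller than the left-hand side for large $|B|$ as soon as $d\ge 2$: no constant $C_d$ can work. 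The source of the error is the identification of $\Phi^{-1}(t)$ with $t/\Phi^{-1}(t)$, which is legitimate only in the quadratic case $\Phi(x)=x^2$ (the $L^2$ proof you are transposing); for $\Phi_d$ with $d\ge 2$ the two differ by $(\log t)^{d-1}$, and the correct block factor — the one produced by the generalized H\"older inequality for Orlicz spaces, namely the $\Psi_d$-norm of the constant $1$ on $B$ with counting measure — is $1/\Psi_d^{-1}(1/|B|)\asymp |B|/\Phi_d^{-1}(|B|)\asymp\sqrt{|B|}\,(\log|B|)^{(d-1)/2}$, with the logarithms in the numerator, not the denominator.

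Relatedly, your ``condensation'' is not an equivalence: the genuine condensed form of condition (\ref{higher moment phi}) is $\sum_{\bm{k}}2^{|\bm{k}|}\beta_{2^{\bm{k}}}/\Phi_d^{-1}(2^{|\bm{k}|})<\infty$, which implies but is not implied by your series $\sum_{\bm{k}}\Phi_d^{-1}(2^{|\bm{k}|})\beta_{2^{\bm{k}}}<\infty$. That slip is harmless on its own (you only use the valid implication), but it is precisely what masks the mismatch: with the correct factor $|B|/\Phi_d^{-1}(|B|)$, the sum over blocks is controlled exactly by the genuine condensation of (\ref{higher moment phi}), and the argument closes. This is the paper's route: by Orlicz--H\"older the block sum is at most $2\left(\Psi_d^{-1}(2^{-\bm{n}})\right)^{-1}$ times a Luxemburg-type norm of the finite sequence $\left(\norm{\mathcal{P}_{\bm{0}}(X_{\bm{v}})}_{\Phi_d}\right)_{\bm{v}\in B}$; that second factor is bounded by $C\norm{\E_{-2^{\bm{n}}}[X_{\bm{0}}]}_{\Phi_d}$ using Lemma \ref{Lem tool}, the Rosenthal inequality of Lemma \ref{rosenthal_ineq_orlicz} applied to the ortho-martingale differences $(\mathcal{P}_{-\bm{v}}(X_{\bm{0}}))_{\bm{v}\in B}$, and the telescoping--convexity bound $\norm{\sum_{\bm{v}\in B}\mathcal{P}_{-\bm{v}}(X_{\bm{0}})}_{\Phi_d}\le 2^d\norm{\E_{-2^{\bm{n}}}[X_{\bm{0}}]}_{\Phi_d}$; finally the asymptotic $\Phi_d^{-1}(2^{\bm{n}})\sim K'\,2^{\bm{n}}\Psi_d^{-1}(2^{-\bm{n}})$ converts the result into the condensed form of the hypothesis. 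Note also that, even setting aside its falsity, your block estimate is never actually proved — the coordinate-peeling argument is explicitly deferred as ``the delicate point'' — so the key step of the lemma is both wrong as stated and missing as a proof.
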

	
	\begin{proof}[Proof of Lemma~\ref{fact5}]
		Let $\bm{a}, \bm{b} \in \mathbb{Z}^d$ such that $\bm{a} \le \bm{b}$.  Denote by $\Psi_d$ the conjugate function associated with $\Phi_d$ defined in the following way
		\[
		\Psi_d(x)=\sup_{y\geq 0}(xy-\Phi_d(y))
		\]
		for $x \ge 0$.
		By the generalized Holder inequality for Orlicz spaces (see \citealp{MR1113700}, p.58), we have 
		\begin{align}
			\label{eq:holder}
			\sum_{\bm{u}\geq \bm{1}}{\| \mathcal{P}_{\bm{0}}(X_{\bm{u}})\|}_{\Phi_d} = & 
			\sum_{\bm{n} \geq \bm{0}}\sum_{\bm{v} = 2^{\bm{n}}}^{2^{\bm{n}+\bm{1}}-\bm{1}}{\|\mathcal{P}_{\bm{0}}(X_{\bm{v}})\|}_{\Phi_d}\nonumber\\
			\leq & 2\sum_{\bm{n}\geq \bm{0}}\inf \Biggl\{ \eta > 0 : \sum_{\bm{v} = 2^{\bm{n}}}^{2^{\bm{n}+\bm{1}}-\bm{1}}\Psi_d \biggl(\frac{1}{\eta }%
			\biggr)\leq 1\Biggr\} \cdot\inf \Biggl\{\eta > 0 : \sum_{\bm{v} = 2^{\bm{n}}}^{2^{\bm{n}+\bm{1}}-\bm{1}}\Phi_d \biggl(\frac{\|{\mathcal{P}_{\bm{0}}(X_{\bm{v}})\|}_{\Phi_d}}{\eta} \biggr)\leq 1\Biggr\},
		\end{align}
		where $2^{\bm{n}} = (2^{n_1}, \ldots, 2^{n_d})$. Computing the second term in the sum, we get%
		\begin{equation*}
			\inf \Biggl\{ \eta > 0 : \sum_{\bm{v} = 2^{\bm{n}}}^{2^{\bm{n}+\bm{1}}-\bm{1}}\Psi_d \biggl(\frac{1}{\eta }%
			\biggr)\leq 1\Biggr\}=%
			\frac{1}{\Psi_d ^{-1}(|2^{\bm{n}}|^{-1})}.
		\end{equation*}%
		In order to control the second term in \eqref{eq:holder}, we make the following remark: if $f$ is a nondecreasing convex function then for any $\eta > 0$, the following holds
		\begin{align*}
			f\Biggl({\Biggl\|\sum_{\bm{v} = 2^{\bm{n}}}^{2^{\bm{n}+\bm{1}}-\bm{1}}\frac{\mathcal{P}_{-\bm{v}}(X_{\bm{0}})}{\eta}\Biggr\|}_{\Phi_d}\Bigg) & = f\Biggl({\Biggr\|\frac{1}{\eta}\sum_{i=0}^d(-1)^{d-i}\sum_{1 \le j_1 < \cdots < j_i \le d}\mathbb{E}_{-2^{\bm{n}+\bm{1}^{(j_1, \ldots,j_i)}}}[X_{\bm{0}}]\Biggr\|}_{\Phi_d}\Biggr)\\
			& \le \frac{1}{2^d}\sum_{i=0}^d\sum_{1 \le j_1 < \cdots < j_i \le d}f\Biggl({\Biggl\|\frac{2^d}{\eta}(-1)^{d-i}\mathbb{E}_{-2^{\bm{n}+\bm{1}^{(j_1, \ldots, j_i)}}}[X_{\bm{0}}]\Biggr\|}_{\Phi_d}\Biggr)\\
			& \le f\Biggl(\frac{2^d}{\eta}{\|\mathbb{E}_{-2^{\bm{n}}}[X_{\bm{0}}]\|}_{\Phi_d}\Biggr).
		\end{align*}
		Note that in the previous inequalities, the term corresponding to $i = 0$ is by convention $(-1)^d\mathbb{E}_{-2^{\bm{n}+\bm{1}}}[X_{\bm{0}}]$. It is enough to control the second term in \eqref{eq:holder} only when $\mathbb{E}\Bigl[\Phi_d \Bigl(\frac{|\mathcal{P}_{-\bm{v}}(X_{\bm{0}})|}{\eta }\Bigr)\Bigr]\leq \Phi_d(\mathrm{e}-1)$ for any $\bm{v} \ge \bm{0}$, as the other cases can be proved using similar arguments and are left to the reader.	By Lemma \ref{Lem tool} above, if $\mathbb{E}\Bigl[\Phi_d \Bigl(\frac{|\mathcal{P}_{-\bm{v}}(X_{\bm{0}})|}{ \eta }\Bigr)\Bigr]\leq \Phi_d(\mathrm{e}-1)$ for any $\bm{v} \ge \bm{0}$,
		we also have%
		\begin{equation*}
			\Phi_d \biggl(\frac{{\|\mathcal{P}_{-\bm{v}}(X_{\bm{0}})\|}_{\Phi_d }}{\eta }\biggr)\leq
			\mathbb{E}\biggl[\Phi_d \biggl(\frac{|\mathcal{P}_{-\bm{v}}(X_{\bm{0}})|}{\eta }\biggr)\biggr].
		\end{equation*}%
		
		So%
		\begin{equation}
			\label{ineq_norm_esperance}
			\sum_{\bm{v} = 2^{\bm{n}}}^{2^{\bm{n}+\bm{1}}-\bm{1}}\Phi_d \biggl(\frac{{\|\mathcal{P}_{-\bm{v}}(X_{\bm{0}})\|}_{\Phi_d }}{\eta }\biggr)\leq \sum_{\bm{v} = 2^{\bm{n}}}^{2^{\bm{n}+\bm{1}}-\bm{1}}\mathbb{E}\biggl[\Phi_d \biggl(\frac{|\mathcal{P}_{-\bm{v}}(X_{\bm{0}})|}{\eta }\biggr)\biggr].
		\end{equation}
		Before proceeding, we state the following lemma which is a version of the Rosenthal inequality in the Orlicz space associated with $\Phi_d$. Its proof will be given in the appendix.
		\begin{lemma}
			\label{rosenthal_ineq_orlicz}
			If $(d_{\bm{u}})_{\bm{u} \in (\mathbb{N})^*}$ is an ortho-martingale difference field and $0<\epsilon <1$, then there exists two constants $C_1,C_2 > 1$, which only depend on $d$ and $\epsilon$, such that
			\begin{equation*}
				\sum_{\bm{u} = \bm{0}}^{\bm{n} - \bm{1}}\mathbb{E}\bigl[\Phi_d(|d_{\bm{u}}|)\bigr] \le  C_1\max\Biggl\{\varphi_d^{-1}\Biggl(C_2{\Biggl\|\sum_{\bm{u} = \bm{0}}^{\bm{n} - \bm{1}}d_{\bm{u}}\Biggr\|}_{\Phi_d}\Biggr), \phi_d\circ f_{\epsilon}\Biggl(C_2{\Biggl\|\sum_{\bm{u} = \bm{0}}^{\bm{n} - \bm{1}}d_{\bm{u}}\Biggr\|}_{\Phi_d}\Biggr)\Biggr\},
			\end{equation*}
			with $\phi_d(x) = x^{d+1}\bigl(\log(1+x)\bigr)^{d-1}$, $\varphi_d(x) = x\bigl(\log(1+x)\bigr)^{d-1}$ and $f_{\epsilon}(x) = x^{1/(1-\epsilon)}$ for all $x \ge 0$. 
		\end{lemma}
		We let $\epsilon = \frac{1}{d+2}$ and we notice that while the function $\phi_d\circ f_{\epsilon}$ is convex on $[0,\infty)$, that is not the case for $\varphi_d^{-1}$. To solve this issue, remark that $\Bigl(\varphi_d'\Bigl(\frac{C_1^{-1}}{2}\Bigr)\Bigr)^{-1}x+\frac{C_1^{-1}}{2} \ge \varphi_d^{-1}(x)$ for all $x \ge 0$ and thus the function $\rho:[0,\infty)\to [0,\infty)$ defined by $\rho(x) = \max\biggl\{\Bigl(\varphi_d'\Bigl(\frac{C_1^{-1}}{2}\Bigr)\Bigr)^{-1}x+\frac{C_1^{-1}}{2},\phi_d\circ f_{\epsilon} (x)\biggr\}$ is convex and greater than $\varphi_d^{-1}$. Applying Lemma \ref{rosenthal_ineq_orlicz}, we can show that there exists $C_1,C_2 > 1$ such that for any $\eta > 0$,
		\begin{align*}
			\hspace{-0.5cm}\sum_{\bm{v} = 2^{\bm{n}}}^{2^{\bm{n}+\bm{1}}-\bm{1}}\Phi_d\biggl(\frac{{\|\mathcal{P}_{\bm{0}}(X_{\bm{v}})\|}_{\Phi_d}}{\eta}\biggr) &\leq \sum_{\bm{v} = 2^{\bm{n}}}^{2^{\bm{n}+\bm{1}}-\bm{1}}\mathbb{E}\biggl[\Phi_d \biggl(\frac{|\mathcal{P}_{-\bm{v}}(X_{\bm{0}})|}{\eta }\biggr)\biggr]\\
			&\le C_1\max\Bigg\{\varphi_d^{-1}\Biggl(C_2{\Biggl\|\sum_{\bm{v} = 2^{\bm{n}}}^{2^{\bm{n}+\bm{1}}-\bm{1}}\frac{\mathcal{P}_{-\bm{v}}(X_{\bm{0}})}{\eta }\Biggr\|}_{\Phi_d}\Biggr), \phi_d\circ f_{\epsilon}\Biggl(C_2{\Biggl\|\sum_{\bm{v} = 2^{\bm{n}}}^{2^{\bm{n}+\bm{1}}-\bm{1}}\frac{\mathcal{P}_{-\bm{v}}(X_{\bm{0}})}{\eta }\Biggr\|}_{\Phi_d}\Biggr)\Bigg\}\\
			& \le C_1\rho\Biggl(C_2{\Biggl\|\sum_{\bm{v} = 2^{\bm{n}}}^{2^{\bm{n}+\bm{1}}-\bm{1}}\frac{\mathcal{P}_{-\bm{v}}(X_{\bm{0}})}{\eta }\Biggr\|}_{\Phi_d}\Biggr).
		\end{align*}
		According to the remark above, we get that
		\[
		\rho\Biggl(C_2{\Biggl\|\sum_{\bm{v} = 2^{\bm{n}}}^{2^{\bm{n}+\bm{1}}-\bm{1}}\frac{\mathcal{P}_{-\bm{v}}(X_{\bm{0}})}{\eta }\Biggr\|}_{\Phi_d}\Biggr) \le \rho\biggl(\frac{2^dC_2}{\eta}{\|\mathbb{E}_{-2^{\bm{n}}}[X_{\bm{0}}]\|}_{\Phi_d}\biggr). 
		\]
		Since $\rho$ is continous and $\rho(0) < C_1^{-1}$, the set $\bigl\{ \mu > 0 : \rho(\mu) \le C_1^{-1}\bigr\}$ is non-empty. Moreover
		\begin{align*}
			C_1\rho\biggl(C_2\frac{2^d}{\eta}{\|\mathbb{E}_{-2^{\bm{n}}}[X_{\bm{0}}]\|}_{\Phi_d}\biggr) \le 1 
			& \quad \Longleftrightarrow \quad \eta \ge \frac{2^dC_2}{\rho^{-1}(C_1^{-1})}{\|\mathbb{E}_{-2^{\bm{n}}}[X_{\bm{0}}]\|}_{\Phi_d}.
		\end{align*}
		Therefore, setting $C = \frac{2^dC_2}{\rho^{-1}(C_1^{-1})}$, we conclude that
		\begin{align*}
			\inf \Biggl\{ \eta > 0 : \sum_{\bm{v} = 2^{\bm{n}}}^{2^{\bm{n}+\bm{1}}-\bm{1}}\Phi_d \biggl(\frac{{\|\mathcal{P}_{\bm{0}}(X_{\bm{v}})\|}_{\Phi_d }}{\eta} \biggr)\leq 1\Biggr\} & \leq \inf \biggl\{\eta > 0 : C_1\rho\biggl(C_2\frac{2^d}{\eta}{\|\mathbb{E}_{-2^{\bm{n}}}[X_{\bm{0}}]\|}_{\Phi_d}\biggr)\leq 1\biggr\}\\
			& = C{\|\mathbb{E}_{-2^{\bm{n}}}[X_{\bm{0}}]\|}_{\Phi_d }.
		\end{align*}
		Since ${\|\mathbb{E}_{-{\bm{n}}}[X_{\bm{0}}]\|}_{\Phi_d }$ is nonincreasing in all directions of $\bm{n}$, we obtain that for any $\bm{n}$ such that $n_k > 0$ for all $k \in \llbracket 1,d\rrbracket$, we have
		\[
		\frac{|2^{\bm{n}}|}{{\Phi_d}^{-1}(|2^{\bm{n}}|)}{\|\mathbb{E}_{-2^{\bm{n}}}[X_{\bm{0}}]\|}
		_{\Phi_d}\leq 2^d\sum_{\bm{u}=2^{\bm{n}-\bm{1}}}^{2^{\bm{n}}-\bm{1}}\frac{{\|
				\mathbb{E}_{\bm{0}}[X_{\bm{u}}]\|}_{\Phi_d}}{{\Phi_d}^{-1}(|\bm{u}|)}.
		\]
		So, for some positive constant $K$,%
		\begin{equation*}
			\sum_{\bm{v}\geq \bm{1}}{\|\mathcal{P}_{\bm{0}}(X_{\bm{u}})\|}_{\Phi_d }\leq K\sum_{\bm{n}\geq \bm{1}}%
			\frac{\Phi_d^{-1}(|2^{\bm{n}}|)}{|2^{\bm{n}}|{\Psi_d}^{-1}(|2^{\bm{n}}|^{-1})}\sum_{\bm{u}=2^{\bm{n}-\bm{1}}}^{2^{\bm{n}}-\bm{1}}\frac{{\|\mathbb{E}_{-\bm{u}}(X_{\bm{0}})\|}_{\Phi_d }}{{\Phi_d}^{-1}(|\bm{u}|)}.
		\end{equation*}
		However, there exists a constant $K' > 0$ such that ${\Phi_d}^{-1}(|2^{\bm{n}}|)\underset{{\bm{n} \to \infty}}{\sim} |2^{\bm{n}}|{\Psi_d}^{-1}(|2^{\bm{n}}|^{-1})K'$. Hence, according to the previous  inequalities, we have shown that (\ref{higher moment phi}) implies
		\[
		\sum_{\bm{u}\geq \bm{1}}\lVert\mathcal{P}_{\bm{0}}(X_{\bm{u}})\rVert_{\Phi_d}<\infty.
		\]
		In the same way, we have for every $i \in \llbracket 1,d\rrbracket$ and for every $(j_1,\ldots,j_i) \in \llbracket 1,d\rrbracket^i$ such that $j_1 < \cdots < j_i$, 
		\[
		\sum_{\bm{u}^{(j_1,\ldots,j_i)}\ge\bm{1}^{(j_1,\ldots,j_i)}}\lVert\mathcal{P}_{\bm{0}}(X_{\bm{u}^{(j_1,\ldots,j_i)}})\rVert_{\Phi_d}<\infty.
		\]
		Hence (\ref{hannan}) is fulfilled.
	\end{proof}

	\begin{proof}[Proof of Corollary~\ref{coro no centering}]
		This corollary is a consequence of Theorem \ref{main result} and Lemmas \ref{fact4} and \ref{fact5}.
	\end{proof}
	\section{Examples}
	\label{sec:examples}
	In this section, we present various examples of applications of the different results we obtained. First, we will focus on linear processes as well as a particular case of nonlinearity known as the Volterra field. In doing so, we improve on the results by \cite{MR4166203} by requiring weaker assumptions on both the moment of the innovations and the coefficients which appear in each example. More precisely, we obtain a functional CLT despite only requiring that the i.i.d. innovations belong to the Orlicz space $L^2\log^{d-1}L$ instead of the Lebesgue space $L^q$ with $q> 2$ as is required by \cite{MR4166203}. Afterward, we shall discuss the case of Hölder continuous functions of linear fields. To the best of the authors' knowledge, it does not appear that quenched central limit theorems have been derived in this context. Finally, we study the case of weakly dependent processes in the sense of \cite{Wu2005} which play an important role in many physical models such as particle systems (see \citealp{MR2108619, MR1182416}). As far as we know, this class of random fields has been seldom, if ever, investigated for quenched central limit theorems. In that, our theorems provide some innovative convergence results for these processes. 
	\\
	\par Throughout this section, we will write $a \triangleleft b$ whenever $a \le Cb$ with $C > 0$ being a constant which can only depend on some fixed parameters. Recall that the function $\Phi_d : [0,\infty) \to [0,\infty)$ is bijective and defined by \eqref{def_Phi}.
	\subsection{Linear field with independent innovations}
	The first application of our results will deal with linear fields as it presents an opportunity to show how our results improve on that of \cite{MR4166203}. It is also a very common type of fields which present a lot of interest in and of themselves. The main argument of the proof relies on Corollary \ref{main result no centering}.
	\begin{example}
		\label{linear} (Linear field) Let $(\xi_{\bm{n}})_{\bm{n}\in \Z^{d}}$
		be a random field of independent, identically distributed random variables,
		which are centered and satisfy $\E\Bigl[  |\xi_{\bm{0}}|^{2}\bigl(\log (1 + |\xi_{\bm{0}}|)\bigr)^{d-1}\Bigr]  <\infty$. For
		$\bm{k}\geq\bm{0}$ define%
		\[
		X_{\bm{k}}=\sum_{\bm{j}\geq\bm{0}}a_{\bm{j}}\xi_{\bm{k}%
			-\bm{j}},
		\]
		where $a_{\bm{u}}$ are real coefficients such that $\sum_{\bm{u}\geq\bm{0}}a_{\bm{u}}^{2}         
		<\infty$. In addition, assume that
		\begin{equation}
			\sum_{\bm{k}\geq\bm{1}}\frac{1}{\sqrt{|\bm{k}|}}\biggl(\sum
			_{\bm{j}\geq\bm{k-1}}a_{\bm{j}}^{2}\biggr)^{\frac{1}{2}}<\infty.
			\label{cond lin}%
		\end{equation}
		Then the quenched convergence \eqref{QFCTL no centering} holds.
	\end{example}
	The results obtained by \cite{MR4166203} (Remark 6.2 (c)) required the existence of $q-$th moment, with $q > 2$, of the innovation $\xi _{\bm{0}}$ to obtain the quenched functional CLT; meanwhile, we only require that $\xi _{\bm{0}}$ satisfy a weaker Orlicz condition to obtain that result. Additionally, we require weaker assumptions on the coefficients $a_{\bm{u}}, \bm{u} \in \Z^d$.
	\begin{proof}[Proof of Example~\ref{linear}]
		Let $ \bm{u} \ge \bm{0}$. According to the independence of the $\xi_{\bm{n}}$, we have
		\[
		\mathcal{P}_{\bm{0}}(X_{\bm{u}}) = a_{\bm{u}}\xi_{\bm{0}} \quad \textrm{and for $\bm{u} \ge \bm{1}$,} \quad \E_{\bm{1}}[X_{\bm{u}}]=\sum_{\bm{j} \ge \bm{u} - \bm{1}}a_{\bm{j}}\xi_{\bm{u}-\bm{j}}.
		\]
		Applying Burkholder inequality, we obtain
		\begin{align*}
			{\|\E_{\bm{1}}[X_{\bm{u}}]\|}_{2} & = {\Biggl\|\sum_{\bm{j} \ge \bm{u} - \bm{1}}a_{\bm{j}}\xi_{\bm{u}-\bm{j}}\Biggr\|}_{2}\\
			& \triangleleft \sqrt{\sum_{\bm{j} \ge \bm{u} - \bm{1}}a_{\bm{j}}^2{\|\xi_{\bm{u}-\bm{j}}\|}_{2}^2}.
		\end{align*}
		By stationarity of the random field $(\xi_{\bm{i}})_{\bm{i} \in \Z^d}$, we get that
		\[
		{\|\E_{\bm{1}}[X_{\bm{u}}]\|}_{2} \triangleleft {\|\xi_{\bm{0}}\|}_{2}\Biggl(\sum_{\bm{j} \ge \bm{u} - \bm{1}}a_{\bm{j}}^2\Biggr)^{\frac{1}{2}}.
		\]
		Thus, using assumption (\ref{cond lin}) and since ${\|\xi_{\bm{0}}\|}_{\Phi_d} < \infty$, we have shown that condition (\ref{higher moment 2}) is satisfied. Now, noticing that
		\[\sum_{\bm{u} \ge \bm{0}}{{\|\mathcal{P}_{\bm{0}}(X_{\bm{u}})\|}_{\Phi_d}} = \frac{{\|\xi_{\bm{0}}\|}_{\Phi_d}}{{\|\xi_{\bm{0}}\|}_{2}} \sum_{\bm{u} \ge \bm{0}}{{\|\mathcal{P}_{\bm{0}}(X_{\bm{u}})\|}_{2}}\]
		whenever $\mathbb{P}(\xi_{\bm{0}} = 0) < 1$ and using Lemma 3.3 in \cite{MR4166203}, we find that condition \eqref{hannan} is satisfied. To get the result, we simply apply Corollary \ref{main result no centering}.
	\end{proof}
	\subsection{Nonlinearity: the case of Volterra fields}
	As for the linear case, a lot is known about Volterra fields including some quenched limit theorem under a variety of conditions as in \cite{MR2359065, MR4166203}. Applying our results will require a bit more work than in the previous case since the lack of linearity means that we cannot guarantee that Hannan's criterion is satisfied  if we only assume that the coefficients of the Volterra field satisfy a condition similar to \eqref{cond lin}. In particular, a new method of proof relying on Corollary \ref{Cor_FCLT_d} will be required leading to slightly stronger assumptions than in Example~\ref{linear}.
	\begin{example}
		\label{Volterra}(Volterra field) Let $(\xi_{\bm{n}})_{\bm{n}\in \Z^{d}%
		}$ be a random field of independent, identically distributed, and centered
		random variables satisfying $\E\Bigl[|\xi_{\bm{0}}|^{2}\bigl(\log (1 + |\xi_{\bm{0}}|)\bigr)^{d-1}\Bigr]  <\infty$.
		For $\bm{k}\geq\bm{0}$, define%
		\[
		X_{\bm{k}}=\sum_{\bm{u},\bm{v}\geq\bm{0}}a_{\bm{u},\bm{v}}\xi_{\bm{k}-\bm{u}}\xi_{\bm{k}-\bm{v}}
		\]
		where $a_{\bm{u},\bm{v}}$ are real coefficients with $a_{\bm{u}%
			,\bm{u}}=0$ and $\sum_{\bm{u},\bm{v}\geq\bm{0}}a_{\bm{u},\bm{v}}%
		^{2}<\infty$. In addition, assume that                                     
		\begin{equation}
			\sum_{\bm{k\geq1}}\frac{1}{{\Phi_d}^{-1}(|\bm{k}|)}\Biggl(\sum
			_{\bm{u},\bm{v}\geq \bm{k}-\bm{1}}a_{\bm{u},\bm{v}%
			}^{2}\Biggr)^{1/2} < \infty, \label{cond volt}%
		\end{equation}
		Then the quenched functional CLT in Corollary \ref{Cor_FCLT_d} holds.
	\end{example}
	This result is a generalization of the quenched functional CLT obtained in \cite{MR4166203} (Example 6.3). Here, we only require an Orlicz space type condition on the innovation $\xi_{\bm{0}}$ and we weaken the condition (54) in \cite{MR4166203} to condition (\ref{cond volt}). Additionally, we can see that \eqref{cond volt} is not a very tractable condition to work with; therefore we provide the following stronger, but easier to verify, sufficient condition for \eqref{cond volt} to hold:
	$$\sum_{\bm{k}\geq\bm{1}}\frac{\bigl(\log(|\bm{k}|)\bigr)^{\frac{d-1}{2}}}{|\bm{k}|^{\frac{1}{2}}}\Biggl(\sum
	_{\bm{u},\bm{v}\geq \bm{k}-\bm{1}}a_{\bm{u},\bm{v}%
	}^{2}\Biggr)^{1/2} < \infty.$$
	\begin{proof}[Proof of Example \ref{Volterra}]
		Let $ \bm{k} \ge \bm{1}$  and note that%
		\[
		\E_{\bm{1}}[X_{\bm{k}}]=\sum_{\bm{u},\bm{v} \ge \bm{k} - \bm{1}}a_{\bm{u},\bm{v}}\xi _{\bm{k}-\bm{u}}\xi _{\bm{k}-\bm{v}}.
		\]
		Let $(\xi_{\bm{n}}^{\prime})_{\bm{n}\in \Z^{d}}$ and $(\xi_{\bm{n}%
		}^{\prime\prime})_{\bm{n}\in \Z^{d}}$ be two independent copies of
		$(\xi_{\bm{n}})_{\bm{n}\in \Z^{d}}$. By applying a decoupling inequality by de la Pe\~{n}a and Gin\'e (Theorem 3.1.1 in \citealp{MR1666908}, p.99), we get for any $t > 0$,
		\begin{align*}
			\E\bigl[\Phi_d\bigl(|\E_{\bm{1}}[X_{\bm{k}}]|/t\bigr)\bigr]& = \E\Biggl[\Phi_d\Biggl(\frac{1}{t}\Biggl|\sum_{\bm{u},\bm{v} \ge \bm{k} - \bm{1}}a_{\bm{u},\bm{v}}\xi _{\bm{k}-\bm{u}}\xi _{\bm{k}-\bm{v}}\Biggr|\Biggr)\Biggr]\\
			& \le \E\Biggl[\Phi_d\Biggl(\frac{C}{t}\Biggl|\sum_{\bm{u},\bm{v} \ge \bm{k} - \bm{1}}a_{\bm{u},\bm{v}}\xi _{\bm{k}-\bm{u}}^{\prime}\xi _{\bm{k}-\bm{v}}^{\prime\prime}\Biggr|\Biggr)\Biggr],
		\end{align*}
		with $C > 0$. Hence 
		\[
		{\|\E_{\bm{1}}[X_{\bm{k}}]\|}_{\Phi_d} \triangleleft {\Biggl\|\sum_{\bm{u},\bm{v} \ge \bm{k} - \bm{1}}a_{\bm{u},\bm{v}}\xi _{\bm{k}-\bm{u}}^{\prime}\xi _{\bm{k}-\bm{v}}^{\prime\prime}\Biggr\|}_{\Phi_d}.
		\]
		Therefore, using the inequality ${\|XY\|}_{\Phi_d} \triangleleft {\|X\|}_{\Phi_d}{\|Y\|}_{\Phi_d}$ for any two independent random variables $X, Y$ such that both ${\|X\|}_{\Phi_d}$ and ${\|Y\|}_{\Phi_d}$ are finite, and applying the Burkholder inequality for Orlicz spaces (see \citealp{MR0566768}, p.304, VII - 92), we get
		\[
		{\|\E_{\bm{1}}[X_{\bm{k}}]\|}_{\Phi_d} \triangleleft \Biggl(\sum
		_{\substack{\bm{u},\bm{v}\geq \bm{k}- \bm{1}\\\bm{u}\neq \bm{v}}}a_{\bm{u},\bm{v}%
		}^{2}{\|\xi_{\bm{k}-\bm{u}}\|}_{\Phi_d}^2{\|\xi_{\bm{k}-\bm{v}}\|}_{\Phi_d}^2\Biggr)^{\frac{1}{2}}.
		\]
		By stationarity and since ${\|\xi_{\bm{0}}\|}_{\Phi_d} < \infty$, we obtain by using assumption (\ref{cond volt}), that condition \eqref{higher moment phi} holds.
		Thus the CLT in Corollary \ref{Cor_FCLT_d} holds.
	\end{proof}
	Here, we cannot relax assumption \eqref{cond volt} on the coefficients $a_{\bm{u},\bm{v}}$ to a condition similar to \eqref{cond lin} since it would not guarantee that the projective criterion \eqref{hannan} is satisfied. Indeed, in the case of Volterra fields we have
	\[
	\mathcal{P}_{\bm{0}}(X_{\bm{k}}) = \sum_{\substack{\bm{u}, \bm{v} \ge \bm{k}\\ \langle \bm{u} - \bm{k}, \bm{v} - \bm{k} \rangle = 0}}a_{\bm{u}, \bm{v}}\xi_{\bm{k} - \bm{u}}\xi_{\bm{k} - \bm{v}},
	\]
	where $\langle \bm{i},\bm{j} \rangle = \sum_{\ell =1}^d{i_\ell j_\ell}$ for $\bm{i}, \bm{j} \in \Z^d$. Therefore, using the independence of the $\xi_{\bm{n}}$, it holds that
	\[
	{\|\mathcal{P}_{\bm{0}}(X_{\bm{k}})\|}_{\Phi_d} \triangleright {\|\xi_{\bm{0}}\|}_{2}^2\sqrt{\sum_{\substack{\bm{u}, \bm{v} \ge \bm{k}\\ \langle \bm{u} - \bm{k}, \bm{v} - \bm{k} \rangle = 0}}a_{\bm{u}, \bm{v}}^2},
	\]
	Now, if we let $g : x \mapsto x^{-\frac{1}{2}}h(x)$ where $h(x) = \int_1^{x}{\frac{1}{(\log(1+y))^2}dy}$ and $a_{\bm{u}, \bm{v}} = \Bigl(\frac{g'(u_2)g'(v_1)}{u_1v_2}\Bigr)^{1/2}$, then 
	
	\[
	\sum_{\bm{k\geq1}}\frac{1}{\sqrt{|\bm{k}|}}\Biggl(\sum
	_{\bm{u},\bm{v}\geq \bm{k}-\bm{1}}a_{\bm{u},\bm{v}%
	}^{2}\Biggr)^{1/2} < \infty,
	\]
	but Hannan's condition \eqref{hannan} fails.
	\subsection{Hölderian function of a linear field}
	\par Linear random fields such as the ones presented in Example \ref{linear} are quite useful for stochastic modeling and are a very common occurrence in the literature of that subject. Despite that, these types of fields might not capture the nonlinear properties of many dynamical systems and thus practicians are often required to introduce some more complex models that lack linearity. As we have seen, Volterra fields are a way to do so; however, in a lot of cases, a better model for a dynamical system can appear through random fields which are regular functions of linear fields. In the following example, we are interested in a type of regularity known as H\"{o}lder continuity. Such processes have been studied by \cite{MR2359065} for example and an annealed, i.e. nonquenched, central limit theorem under Hannan's condition has been derived. Here, we will provide sufficient conditions for that central limit theorem to be quenched.
	\begin{example}
		\label{Holder}(Hölder function of a linear field) Consider an Hölder continuous function $f : \mathbb{R} \to \mathbb{R}$ of order $\alpha \in (0,1]$ and let $(\xi_{\bm{n}})_{\bm{n}\in \Z^{d}%
		}$ be a random field of independent, identically distributed, and centered random variables which satisfy the following condition: 
		\[\begin{cases} \E[\xi_{\bm{0}}^{2}] <\infty\text{ if } \alpha\in (0,1),\\
			\E\Bigl[|\xi_{\bm{0}}|^{2}\bigl(\log (1 + |\xi_{\bm{0}}|)\bigr)^{d-1}\Bigr] <\infty \text{ if } \alpha = 1.
		\end{cases}\] 
		For $\bm{k}\geq\bm{0}$, define%
		\begin{equation}
			\label{Holder def}
			X_{\bm{k}}=f\Biggl(\sum_{\bm{j}\geq\bm{0}}a_{\bm{j}}\xi_{\bm{k}%
				-\bm{j}}\Biggr) - \mathbb{E}\Biggl[f\Biggl(\sum_{\bm{j}\geq\bm{0}}a_{\bm{j}}\xi_{\bm{k}%
				-\bm{j}}\Biggr)\Biggr]
		\end{equation}
		where $a_{\bm{u}}$ are real coefficients such that $\sum_{\bm{u}\geq\bm{0}}a_{\bm{u}}^{2}         
		<\infty$. If the coefficients $a_{\bm{u}}$ also satisfy
		\begin{equation}
			\label{cond holder}
			\sum_{\bm{k}\geq\bm{1}}\frac{1}{{\Phi_d}^{-1}(|\bm{k}|)}\Biggl(\sum
			_{\bm{j}\geq\bm{k-1}}a_{\bm{j}}^{2}\Biggr)^{\frac{\alpha}{2}}<\infty
		\end{equation}
		then the quenched functional CLT in Corollary \ref{Cor_FCLT_d} holds.
	\end{example}
	Once again, note that \eqref{cond holder} is satisfied whenever 
	\begin{equation*}
		\sum_{\bm{k}\geq\bm{1}}\frac{\bigl(\log(|\bm{k}|)\bigr)^{\frac{d-1}{2}}}{|\bm{k}|^{\frac{1}{2}}}\Biggl(\sum
		_{\bm{j}\geq\bm{k-1}}a_{\bm{j}}^{2}\Biggr)^{\frac{\alpha}{2}}<\infty.
	\end{equation*}
	Before moving on with the rest of the proof we will need the following lemma whose proof will be provided later on. 
	\begin{lemma}
		\label{lemme_tech}
		For all $\alpha \in (0,1)$ and for any nonnegative random variable $X$, we have the following bound
		\[
		{\|X^\alpha\|}_{\Phi_d} \le K_{\alpha, d} {\|X\|}_2^{\alpha}
		\]
		with $K_{\alpha, d} > 0$ only depending on $\alpha$ and $d$.
	\end{lemma}	
	\begin{proof}[Proof of Example~\ref{Holder}]
		Throughout this proof, we will denote by $C_\alpha$ the Hölder constant associated with $f$. 
		\\
		
		Let $\bm{k} \ge \bm{1}$ and consider $(\xi'_{\bm{n}})_{\bm{n}\in \Z^{d}}$ an independent copy of $(\xi_{\bm{n}})_{\bm{n}\in \Z^{d}}$. Using the fact that $\mathbb{E}_{\bm{1}}[X'_{\bm{k}}] = 0$ where $X'_{\bm{k}}$ is given by \eqref{Holder def} with the innovations $\xi_{\bm{n}}$ replaced by $\xi'_{\bm{n}}$, we deduce that
		\[
		\mathbb{E}_{\bm{1}}[X_{\bm{k}}] = \mathbb{E}_{\bm{1}}\Biggl[f\Biggl(\sum_{\bm{j}\geq\bm{0}}a_{\bm{j}}\xi^*_{\bm{k}%
			-\bm{j}}\Biggr)\Biggr] - \mathbb{E}_{\bm{1}}\Biggl[f\Biggl(\sum_{\bm{j}\geq\bm{0}}a_{\bm{j}}\xi'_{\bm{k}%
			-\bm{j}}\Biggr)\Biggr],
		\]
		where $\xi^*_{\bm{n}} = \xi_{\bm{n}}$ for $\bm{n} \le \bm{1}$ and $\xi^*_{\bm{n}} = \xi'_{\bm{n}}$ otherwise. Since $f$ is Hölder continuous of order $\alpha$, we find that
		\begin{align*}
			{\|\mathbb{E}_{\bm{1}}[X_{\bm{k}}]\|}_{\Phi_d} & \le C_{\alpha}{\Biggl\|\Biggl|\sum_{\bm{j}\ge\bm{k} - \bm{1}}a_{\bm{j}}(\xi_{\bm{k}%
					-\bm{j}}-\xi'_{\bm{k}%
					-\bm{j}})\Biggr|^{\alpha}\Biggr\|}_{\Phi_d}.
		\end{align*}
		First, suppose that \underline{$0 <\alpha < 1$}. Then by Lemma \ref{lemme_tech}, there exists $K_{\alpha, d} > 0$ such that
		\[
		{\Bigg\|\Biggl|\sum_{\bm{j}\ge\bm{k} - \bm{1}}a_{\bm{j}}(\xi_{\bm{k}%
				-\bm{j}}-\xi'_{\bm{k}%
				-\bm{j}})\Biggr|^{\alpha}\Biggr\|}_{\Phi_d} \le K_{\alpha, d} {\Bigg\|\sum_{\bm{j}\ge\bm{k} - \bm{1}}a_{\bm{j}}(\xi_{\bm{k}%
				-\bm{j}}-\xi'_{\bm{k}%
				-\bm{j}})\Biggr\|}_{2}^{\alpha}.
		\]
		However, according to the Burkholder inequality for the $L^2$-norm, it holds
		\[
		{\Bigg\|\sum_{\bm{j}\ge\bm{k} - \bm{1}}a_{\bm{j}}(\xi_{\bm{k}%
				-\bm{j}}-\xi'_{\bm{k}%
				-\bm{j}})\Biggr\|}_{2} 
		\triangleleft 
		\Bigg(\sum_{\bf{j}\ge\bf{k} - \bf{1}}a_{\bf{j}}^2{\|\xi_{\bf{k}-\bf{j}}\|}_{2}^2\Bigg)^{\frac{1}{2}} ={\|\xi_{\bm{0}}\|}_{2}\Biggl(\sum_{\bf{j}\ge\bf{k} - \bf{1}}a_{\bf{j}}^2\Biggr)^{\frac{1}{2}}
		\]
		Thus
		\begin{equation}
			\label{ineq case <1}
			{\Biggl\|\Biggl|\sum_{\bm{j}\ge\bm{k} - \bm{1}}a_{\bm{j}}(\xi_{\bm{k}%
					-\bm{j}}-\xi'_{\bm{k}%
					-\bm{j}})\Biggr|^{\alpha}\Biggr\|}_{\Phi_d} 
			\triangleleft {\|\xi_{\bm{0}}\|}_{2}^{\alpha} \Biggl(\sum_{\bm{j} \ge \bm{k} - \bm{1}}a_{\bm{j}}^2\Biggr)^{\frac{\alpha}{2}}.
		\end{equation}
		Now, suppose that \underline{$\alpha = 1$}, then applying the Burkholder inequality for Orlicz space (see \citealp{MR0566768}, p.304, VII - 92), we get
		\begin{equation}
			\label{ineq case =1}
			{\Bigg\|\sum_{\bm{j}\ge\bm{k} - \bm{1}}a_{\bm{j}}(\xi_{\bm{k}%
					-\bm{j}}-\xi'_{\bm{k}%
					-\bm{j}})\Biggr\|}_{\Phi_d} \triangleleft  {\|\xi_{\bm{0}}\|}_{\Phi_d} \Biggl(\sum_{\bm{j} \ge \bm{k} - \bm{1}}a_{\bm{j}}^2\Biggr)^{\frac{1}{2}}.
		\end{equation}
		Combining both inequalities \eqref{ineq case <1} and \eqref{ineq case =1}, we obtain that for any $\alpha \in (0,1]$, the inequality
		\[
		{\Biggl\|\Bigg|\sum_{\bm{j}\ge\bm{k} - \bm{1}}a_{\bm{j}}(\xi_{\bm{k}%
				-\bm{j}}-\xi'_{\bm{k}%
				-\bm{j}})\Bigg|^{\alpha}\Biggr\|}_{\Phi_d} 
		\triangleleft 
		\begin{cases}
			{\|\xi_{\bm{0}}\|}_{2}^{\alpha} \Bigl(\sum_{\bm{j} \ge \bm{k} - \bm{1}}a_{\bm{j}}^2\Bigr)^{\frac{\alpha}{2}} \text{ if } \alpha\in (0,1),\\
			{\|\xi_{\bm{0}}\|}_{\Phi_d}^{\alpha} \Bigl(\sum_{\bm{j} \ge \bm{k} - \bm{1}}a_{\bm{j}}^2\Bigr)^{\frac{\alpha}{2}} \text { if } \alpha=1 
		\end{cases}
		\]
		is satisfied. Therefore, using \eqref{cond holder} and the moment condition on the random variable $\xi_{\bm{0}}$, we obtain that \eqref{higher moment phi} is verified and thus the quenched functional CLT in Corollary \ref{Cor_FCLT_d} holds.
	\end{proof}
	In order to prove Lemma \ref{lemme_tech}, we give another very useful property of the natural logarithm: if $0 < \alpha < 1$ and $\epsilon\in (0,\alpha]$, then there exists a constant $c_{d,\epsilon} > 0$ such that for any $x > 0$,
	\begin{equation}
		\bigl(\log(1+x^\alpha)\bigr)^{d-1} \le c_{d,\epsilon} x^{(d-1)\epsilon}.
	\end{equation}
	In particular, this implies that the function $\Phi_d$ satisfies
	\[
	\Phi_d(x^\alpha)\le c_{d,\epsilon}x^{2\alpha+(d-1)\epsilon},
	\]
	for any $x > 0$.
	\begin{proof}[Proof of Lemma~\ref{lemme_tech}]
		Let $\alpha \in (0,1)$ and $X$ be a positive random variable, and consider $\epsilon\in (0,\alpha]$. Consider $t=\kappa^{\alpha}>0$ and remark that
		\[
		\mathbb{E}\biggl[\Phi_d\biggl( \frac{X^\alpha}{t}\biggr) \biggr]=
		\mathbb{E}\biggl[\Phi_d\biggl(\biggl(\frac{X}{\kappa}\biggr)^\alpha\biggr) \biggr]\le c_{d,\epsilon}\mathbb{E}\biggl[\biggl(\frac{X}{\kappa}\biggr)^{2\alpha +(d-1)\epsilon}\biggr].
		\]
		First, suppose that \underline{$\alpha\in [1/2,1 )$}.  We choose $\epsilon$ small enough such that $\delta := 2\alpha+(d-1)\epsilon<2$ (which is always possible since $\alpha<1$). Letting $\kappa_0 =c_{d,\epsilon}^{1/\delta}{\|X\|}_{\delta}$ (correspondingly $t_0=c_{d,\epsilon}^{\alpha/\delta}{\|X\|}_{\delta}^{\alpha}$), we have 
		\[
		\E\biggl[
		\Phi_d\biggl( \frac{X^\alpha}{t_0}\biggr)
		\biggr]
		\le c_{d,\epsilon}\E\biggl[\biggl(\frac{X}{\kappa_0}\biggr)^\delta\biggr] = \frac{\E[X^{\delta}]}{{\|X\|}_{\delta}^{\delta}} = 1.
		\]
		We deduce from the definition of ${\|\cdot\|}_{\Phi_d}$ that 
		\[
		{\|X^{\alpha}\|}_{\Phi_d}
		\le 
		c_{d,\epsilon}^{\alpha/\delta}{\|X\|}_{\delta}^{\alpha}
		\le
		c_{d,\epsilon}^{\alpha/\delta}{\|X\|}_{2}^{\alpha}.
		\]
		Now suppose \underline{$\alpha\in (0,1/2 )$}.  We choose $\epsilon$ small enough such that $\delta := 2\alpha+(d-1)\epsilon<1$ (which is always possible since $\alpha<1/2$). In this case, we set $\kappa_0 =c_{d,\epsilon}^{1/\delta}{\|X\|}_{1}$ (correspondingly $t_0=c_{d,\epsilon}^{\alpha/\delta}{\|X\|}_{1}^{\alpha}$). Then, using the concavity of the function $x \mapsto x^{\delta}$, we get
		\[
		\E\biggl[
		\Phi_d\biggl( \frac{X^\alpha}{t_0}\biggr)
		\biggr]
		\le c_{d,\epsilon}\E\biggl[\biggl(\frac{X}{\kappa_0}\biggr)^\delta\biggr]\le   c_{d,\epsilon}\biggl(\E\biggl[\frac{X}{\kappa_0}\biggr]\biggr)^\delta = \biggl( \frac{\mathbb{E}[X]}{{\|X\|}_{1}}\biggr)^\delta = 1.
		\]
		Therefore, 
		\[
		{\|X^{\alpha}\|}_{\Phi_d}
		\le 
		c_{d,\epsilon}^{\alpha/\delta}{\|X\|}_{1}^{\alpha}
		\le
		c_{d,\epsilon}^{\alpha/\delta}{\|X\|}_{2}^{\alpha}.
		\]
		
		Combining the discussions above, we conclude that for any $\alpha\in(0,1)$ there exists $K_{\alpha,d} > 0$ such that
		
		\[
		{\|X^{\alpha}\|}_{\Phi_d}
		\le
		K_{\alpha,d}{\|X\|}_{2}^{\alpha}.
		\]
	\end{proof}
	\par As in the previous example, we cannot relax condition \eqref{cond holder} to condition \eqref{cond lin} in this case. In fact, neither \eqref{cond lin} nor the condition $\sum_{\bm{i} \ge \bm{0}}|a_{\bm{i}}|^{\alpha} < \infty$ are enough to guarantee that \eqref{hannan} holds. Indeed, consider the case $d = 2$ and let 
	\[a_{u,v} = \biggl\{\begin{array}{l r}
		g'(u)g'(v) & \textrm{if $u > 0$ and $v > 0$}\\
		0 & \textrm{otherwise,}\\
	\end{array}
	\]
	where $g : x \mapsto x^{-1}\bigl(\log(1+x)\bigr)^{-3}$. 
	\\
	\\
	Assume that the innovation field $(\xi_{\bm{i}})_{\bm{i} \in\Z^2}$ is a random field of independent and identically distributed random variables such that $\xi_{0,0}$ follows the standard normal distribution $\mathcal{N}(0,1)$. Consider the Lipschitz (Hölderian of order $1$) function $f : x \in \mathbb{R} \mapsto |x| \in \mathbb{R}^+$. Letting $i,j \ge 0$, we have 
	\[
	\mathcal{P}_{0,0}(X_{i,j}) = \mathbb{E}_{0,0}\bigl[f(Y + Z + \zeta_{0,0}) - f(Y + Z +\zeta_{-1,0}) - f(Y + Z +\zeta_{0,-1}) + f(Y + Z + \zeta_{-1,-1})\bigr]
	\]
	with
	\[
	Y = \sum_{k \ge i+1}\sum_{l \ge j+1}a_{k,l}\xi_{i-k,j-l}, \quad Z = \sum_{k \ge 0}\sum_{l \ge 0}a_{k,l}\xi'_{i-k,j-l} - \sum_{k \ge i}\sum_{l \ge j}a_{k,l}\xi'_{i-k,j-l},
	\]
	\[
	\zeta_{0,0} = \sum_{k \ge i}a_{k,j}\xi_{i-k,0} + \sum_{l \ge j+1}a_{i,l}\xi_{0,j-l},
	\]
	\[
	\zeta_{-1,0} = \sum_{k \ge i+1}a_{k,j}\xi_{i-k,0} + \sum_{l \ge j}a_{i,l}\xi_{0,j-l}',
	\]
	\[
	\zeta_{0,-1} = \sum_{k \ge i}a_{k,j}\xi_{i-k,0}' + \sum_{l \ge j+1}a_{i,l}\xi_{0,j-l},\]
	and
	\[
	\zeta_{-1,-1} = \sum_{k \ge i}a_{k,j}\xi_{i-k,0}' + \sum_{l \ge j+1}a_{i,l}\xi_{0,j-l}'.
	\]
	where $(\xi'_{i,j})_{(i,j) \in \mathbb{Z}^2}$ is  an independent copy of $(\xi _{i,j})_{(i,j) \in \mathbb{Z}^2}$. Let $x \in {\mathbb{R}}^{\mathbb{Z}^2}$, then
	\begin{align*}
		& \mathbb{E}\bigl[f(Y + Z + \zeta_{0,0}) - f(Y + Z +\zeta_{-1,0}) - f(Y + Z +\zeta_{0,-1}) + f(Y + Z + \zeta_{-1,-1})\bigm|\xi = x\bigr]\\
		& \qquad \qquad = \mathbb{E}\bigl[f(y + Z+ \zeta_{0,0}^x) - f(y + Z + \zeta_{-1,0}^x) - f(y + Z + \zeta_{0,-1}^x) + f(y + Z + \zeta_{-1,-1}^x)\bigr]
	\end{align*}
	where $y = \sum_{k \ge i+1}\sum_{l \ge j+1}a_{k,l}x_{i-k,j-l}$, 
	\[
	\zeta_{0,0}^x = \sum_{k \ge i}a_{k,j}x_{i-k,0} + \sum_{l \ge j+1}a_{i,l}x_{0,j-l},
	\]
	\[
	\zeta_{-1,0}^x = \sum_{k \ge i+1}a_{k,j}x_{i-k,0} + \sum_{l \ge j}a_{i,l}\xi'_{0,j-l}
	\]
	\[
	\zeta_{0,-1}^x = \sum_{k \ge i}a_{k,j}\xi'_{i-k,0} + \sum_{l \ge j+1}a_{i,l}x_{0,j-l}
	\]
	and
	\[
	\zeta_{-1,-1}^x = \sum_{k \ge i}a_{k,j}\xi_{i-k,0}' + \sum_{l \ge j+1}a_{i,l}\xi_{0,j-l}'.
	\]
	Remark that $y + Z+ \zeta_{0,0}^x$, $y + Z + \zeta_{-1,0}^x$, $y + Z + \zeta_{0,-1}^x$ and $y + Z + \zeta_{-1,-1}^x$ follows respectively the normal distributions  $\mathcal{N}\bigl(m_{0,0}^x, \sigma_{0,0}^2\bigr)$, $\mathcal{N}\bigl(m_{-1,0}^x, \sigma_{-1,0}^2\bigr)$, $\mathcal{N}\bigl(m_{0,-1}^x, \sigma_{0,-1}^2\bigr)$ and $\mathcal{N}\bigl(m_{-1,-1}^x, \sigma_{-1,-1}^2\bigr)$ with
	\[
	\sigma_{0,0}^2 = \Var[Z] = \sum_{k \ge 0}\sum_{l \ge 0}a_{k,l}^2 - \sum_{k \ge i}\sum_{l \ge j}a_{k,l}^2 \quad \textrm{ and } \quad m_{0,0}^x = y + \zeta_{0,0}^x,
	\]
	\[
	\sigma_{-1,0}^2 = \Var[Z + \zeta_{-1,0}^x] = \sum_{k \ge 0}\sum_{l \ge 0}a_{k,l}^2 - \sum_{k \ge i+1}\sum_{l \ge j}a_{k,l}^2\quad \textrm{ and } \quad m_{-1,0}^x = y + \sum_{k \ge i+1}a_{k,j}x_{i-k,0},
	\]
	\[
	\sigma_{0,-1}^2 = \Var[Z + \zeta_{0,-1}^x] = \sum_{k \ge 0}\sum_{l \ge 0}a_{k,l}^2 - \sum_{k \ge i}\sum_{l \ge j+1}a_{k,l}^2\quad \textrm{ and } \quad m_{0,-1}^x = y + \sum_{l \ge j+1}a_{i,l}x_{0,j-l},
	\]
	and
	\[
	\sigma_{-1,-1}^2 = \Var[Z + \zeta_{-1,-1}^x] = \sum_{k \ge 0}\sum_{l \ge 0}a_{k,l}^2 - \sum_{k \ge i+1}\sum_{l \ge j+1}a_{k,l}^2\quad \textrm{ and } \quad m_{-1,-1}^x = y.
	\]
	Thus, for $(a,b) \in \{-1,0\}^2$,
	\[
	\mathbb{E}[f(y + Z+ \zeta_{a,b}^x)] = \mathbb{E}[|y + Z+ \zeta_{a,b}^x|] = \frac{2\sigma_{a,b}}{\sqrt{2\pi}}e^{-\frac{(m_{a,b}^x)^2}{2\sigma_{a,b}^2}} + \frac{|m_{a,b}^x|}{\sqrt{2\pi \sigma_{a,b}^2}}\int_{-|m_{a,b}^x|}^{|m_{a,b}^x|}e^{-\frac{z^2}{2\sigma_{a,b}^2}}dz.
	\]
	Rewriting the right-hand side, we notice that
	\begin{align*}
		& \frac{2\sigma_{a,b}}{\sqrt{2\pi}}e^{-\frac{(m_{a,b}^x)^2}{2\sigma_{a,b}^2}} + \frac{|m_{a,b}^x|}{\sqrt{2\pi \sigma_{a,b}^2}}\int_{-|m_{a,b}^x|}^{|m_{a,b}^x|}e^{-\frac{z^2}{2\sigma_{a,b}^2}}dz\\
		& \qquad \qquad  = \sigma_{a,b}\Biggl(\frac{2}{\sqrt{2\pi}}e^{-\frac{(m_{a,b}^x)^2}{2\sigma_{a,b}^2}} + \frac{|m_{a,b}^x|}{\sigma_{a,b}}\int_{-\frac{|m_{a,b}^x|}{\sigma_{a,b}}}^{\frac{|m_{a,b}^x|}{\sigma_{a,b}}}\frac{e^{-\frac{z^2}{2}}}{\sqrt{2\pi}}dz\Biggr)\\
		& \qquad \qquad =: \sigma_{a,b}g\biggl(\frac{m_{a,b}^x}{\sigma_{a,b}}\biggr).
	\end{align*}
	For any $(a,b) \in \{-1,0\}^2$, we let $\Xi_{a,b} = m_{a,b}^{\xi} - Y$ and by applying Taylor's formula, we get
	\begin{align*}
		g\biggl(\frac{m_{a,b}^\xi}{\sigma_{a,b}}\biggr) & = g\biggl(\frac{Y}{\sigma_{0,0}} + \bigg(\frac{1}{\sigma_{a,b}} - \frac{1}{\sigma_{0,0}}\biggr)Y + \frac{\Xi_{a,b}}{\sigma_{a,b}}\biggr)\\
		& = g\biggl(\frac{Y}{\sigma_{0,0}}\biggr) + \frac{\Xi_{a,b}}{\sigma_{a,b}}g'\biggl(\frac{Y}{\sigma_{0,0}}\biggr) + \biggl(\frac{1}{\sigma_{a,b}} - \frac{1}{\sigma_{0,0}}\biggr)Yg'\biggl(\frac{Y}{\sigma_{0,0}}\biggr)  + R_{a,b}.
	\end{align*}
	where $|R| \le \frac{2\Xi_{a,b}^2}{\sigma_{a,b}^2} + 2\Bigl(\frac{1}{\sigma_{a,b}} - \frac{1}{\sigma_{0,0}}\Bigr)^2Y^2$. Thus
	\begin{align*}
		{\|\mathcal{P}_{0,0}(X_{i,j})\|}_{\Phi_d} & = {\Biggl\|\sum_{(a,b) \in \{0,1\}^2}(-1)^{a+b}\sigma _{a,b}g\biggl(\frac{m_{a,b}^\xi}{\sigma_{a,b}}\biggr)\Biggr\|}_{\Phi_d}\\
		& \ge \underset{=:N_1}{\underbrace{{\Bigg\|g\biggl(\frac{Y}{\sigma_{0,0}}\biggr)\sum_{(a,b) \in \{0,1\}^2}(-1)^{a+b}\sigma _{a,b}\Biggr\|}_{\Phi_d}}} -  \underset{=:N_2}{\underbrace{{\Biggl\|g'\biggl(\frac{Y}{\sigma_{0,0}}\biggr)\sum_{(a,b) \in \{0,1\}^2}(-1)^{a+b}\Xi_{a,b}\Biggr\|}_{\Phi_d}}}\\
		& \qquad - \underset{=:N_3}{\underbrace{{\Biggl\|Yg'\biggl(\frac{Y}{\sigma_{0,0}}\biggr)\sum_{(a,b) \in \{0,1\}^2}(-1)^{a+b}\biggl(1 - \frac{\sigma_{a,b}}{\sigma_{0,0}}\biggr)\Biggr\|}_{\Phi_d}}}\hspace{-0.3cm} - \underset{=:N_4}{\underbrace{{\Biggl\|\sum_{(a,b) \in \{0,1\}^2}(-1)^{a+b}\sigma _{a,b}R_{a,b}\Biggr\|}_{\Phi_d}}}.
	\end{align*}
	We have the following relations
	\[
	N_1 = {\biggl\|g\biggl(\frac{Y}{\sigma_{0,0}}\biggr)\biggr\|}_{\Phi_d}|\sigma_{0,0} - \sigma_{-1,0}- \sigma_{0,-1} + \sigma_{-1,-1}|,
	\]
	\[
	N_2 = {\biggl\|g'\biggl(\frac{Y}{\sigma_{0,0}}\biggr)\xi_{0,0}'\biggr\|}_{\Phi_d}|a_{i,j}|,
	\]
	\[
	N_3 = {\biggl\|\frac{Y}{\sigma_{0,0}}g'\biggl(\frac{Y}{\sigma_{0,0}}\biggr)\biggr\|}_{\Phi_d}|\sigma_{0,0} - \sigma_{-1,0}- \sigma_{0,-1} + \sigma_{-1,-1}|.
	\]
	and
	\begin{align*}
		N_4 &\le {\biggl\|\biggl(\frac{Y}{\sigma_{0,0}}\biggr)^2\biggr\|}_{\Phi_d}\biggl|\frac{(\sigma_{0,0} - \sigma_{-1,0})^2}{\sigma_{-1,0}} + \frac{(\sigma_{0,0} - \sigma_{0,-1})^2}{\sigma_{0,-1}} - \frac{(\sigma_{0,0} - \sigma_{-1,-1})^2}{\sigma_{-1,-1}}\biggr|\\
		& \qquad \qquad + 2\Biggl(\frac{{\|\Xi_{0,0}^2\|}_{\Phi_d}}{\sigma_{0,0}} + \frac{{\|\Xi_{-1,0}^2\|}_{\Phi_d}}{\sigma_{-1,0}} + \frac{{\|\Xi_{0,-1}^2\|}_{\Phi_d}}{\sigma_{0,-1}}\Biggr).
	\end{align*}
	Before continuing the proof, note that the random variable $Y/\sigma_{0,0}$ follows a centered normal distribution with variance inferior to $1$ whenever $i$ and $j$ are large enough. Using the previous relations, it holds that
	\[
	{\|\mathcal{P}_{0,0}(X_{i,j})\|}_{\Phi_d} \ge |\sigma_{0,0} - \sigma_{-1,0}- \sigma_{0,-1} + \sigma_{-1,-1}|\Biggl({\biggl\|g\biggl(\frac{Y}{\sigma_{0,0}}\biggr)\biggr\|}_{\Phi_d} - {\biggl\|\frac{Y}{\sigma_{0,0}}g'\biggl(\frac{Y}{\sigma_{0,0}}\biggr)\biggr\|}_{\Phi_d}\Biggr) - N_2 - N_4.
	\]
	Now, making use of the equality $u_1 - u_2 = (u_1^2 - u_2^2)/(u_1 + u_2)$ for any $u_1,u_2 > 0$, we obtain
	\[
	\sigma_{0,0} - \sigma_{-1,0} - \sigma_{0,-1} + \sigma_{-1,-1} = \frac{\sigma_{0,0}^2 - \sigma_{-1,0}^2}{\sigma_{0,0} + \sigma_{-1,0}} - \frac{\sigma_{0,-1}^2 - \sigma_{-1,-1}^2}{\sigma_{0,-1} + \sigma_{-1,-1}}.
	\]
	However,
	\[
	\delta_{i,j} := |\sigma_{0,0}^2 - \sigma_{-1,0}^2| = \sum_{l \ge j}a_{i,l}^2 \quad \textrm{ and } \quad |\sigma_{0,-1}^2 - \sigma_{-1,-1}^2| = \delta_{i,j} - a_{i,j}^2.
	\]
	According to the definition of the coefficients $a_{i,j}$, it is possible to show that 
	\[
	|\sigma_{0,0} - \sigma_{-1,0} - \sigma_{0,-1} + \sigma_{-1,-1}| \ge \biggl(\frac{1}{\sigma_{0,0} + \sigma_{-1,0}} - \frac{1}{\sigma_{0,-1} + \sigma_{-1,-1}}\biggr)\delta_{i,j} \triangleright \bigl(\log(1+i)\log(1+j)\bigr)^{-3/2}\sqrt{j}.
	\]
	In particular, it holds that
	\[
	\sum_{(i,j)\ge (0,0)}{\|\mathcal{P}_{0,0}(X_{i,j})\|}_{\Phi_d} = \infty,
	\]
	while also having both
	\[
	\sum_{(i,j) \ge (0,0)}|a_{i,j}| < \infty \quad \textrm{and} \quad \sum_{(i,j)\geq(1,1)}\frac{1}{\sqrt{ij}}\Biggl(\sum
	_{(u,v)\geq(i-1,j-1)}a_{i,j}^{2}\Biggr)^{\frac{1}{2}}<\infty.
	\]
	\subsection{Weakly Dependent Processes}
	In our last example, we study the quenched central limit theorem for weakly dependent random fields in the sense of Wu. Fields of this kind were introduced by \cite{Wu2005} and have many applications in mathematical physics, especially within the study of particle systems (see \citealp{MR2108619, MR1182416}). Indeed, weakly dependent random fields are particularly well-suited to model physical systems as they can capture, at least partially, the influence of the inputs over the outputs of these systems. In particular, they are well adapted to study the case of nonlinear physical models.
	\\
	\par Consider a centered Bernoulli random field $(X_{\bm{i}})_{\bm{i}\in\Z^d}$ defined for every $\bm{i} \in \Z^d$ by $X_{\bm{i}}:= G(\xi_{\bm{i} - \bm{s}};{\bm{s} \ge \bm{0}})$ where $(\xi_{\bm{i}})_{\bm{i}\in\Z^d}$ is a field of independent and identically distributed random variables. Now, denote by $(\xi_{\bm{i}}')_{\bm{i}\in\Z^d}$ an independent copy of $(\xi_{\bm{i}})_{\bm{i}\in\Z^d}$ and set, for any $\bm{i} \in \Z^d$, 
	\[
	\biggl\{\begin{array}{l r}
		\xi_{\bm{i}}^* = \xi_{\bm{i}}' & \textrm{if $\bm{i} = \bm{0}$}\\
		\xi_{\bm{i}}^* = \xi_{\bm{i}} & \textrm{otherwise}
	\end{array} \quad \textrm{as well as} \quad \biggl\{\begin{array}{l r}
		\widetilde{\xi}_{\bm{i}} = \xi_{\bm{i}}' & \textrm{if $i_1 = 0$ and $\bm{i} \le \bm{1}$}\\
		\widetilde{\xi}_{\bm{i}} = \xi_{\bm{i}} & \textrm{otherwise.}
	\end{array}
	\]
	Then, the perturbed systems $(X_{\bm{i}}^*)_{\bm{i} \in \Z^d}$ and $(\widetilde{X}_{\bm{i}})_{\bm{i} \in \Z^d}$ are given by 
	\[
	X_{\bm{i}}^* = G(\xi_{\bm{i} - \bm{s}}^*;{\bm{s} \ge \bm{0}}) \quad \textrm{and} \quad \widetilde{X}_{\bm{i}} = G(\widetilde{\xi}_{\bm{i} - \bm{s}};{\bm{s} \ge \bm{0}}), \qquad \bm{i} \in \Z^d.
	\]
	In this subsection, we are interested in two different stability conditions. First, we take a look at the usual notion of weak dependence in the sense of Wu by saying that the random field $(X_{\bm{i}})_{\bm{i}\in\Z^d}$ is stable whenever 
	\[
	\sum_{\bm{i} \ge \bm{0}}\delta_{\bm{i}} < \infty
	\]
	where the terms $\delta_{\bm{i}}$ are known as the physical dependence coefficients and are defined by
	\[
	\delta_{\bm{i}} = {\|X_{\bm{i}} - X_{\bm{i}}^*\|}_{\Phi_d}.
	\] 
	Under this notion of weak dependence, we have the following quenched functional central limit theorem.
	\begin{example}
		\label{stable}
		Suppose that the field $(X_{\bm{i}})_{\bm{i}\in\Z^d}$ satisfies 
		\begin{equation}
			\label{cond regular}
			\mathbb{E}[X_{\bm{0}}|\mathcal{G}_n]\converge{n}{\infty}{\normalfont{a.s.}} 0,
		\end{equation}
		where $\mathcal{G}_n = \sigma\bigl(\xi_{\bm{i}} : \bm{i} \le \bm{1} \textrm{ and } \exists k \in \llbracket 1,d\rrbracket, i_k \le n \bigr)$. Additionally, suppose that
		\begin{equation}
			\label{cond weak 1}
			\sum_{\bm{k} \ge \bm{1}}\frac{1}{\Phi_d^{-1}(|\bm{k}|)}\sum_{\bm{j} \ge \bm{k} - \bm{1}}\delta_{\bm{j}} < \infty.
		\end{equation}
		Then the conclusion of Corollary~\ref{Cor_FCLT_d} holds.
	\end{example}
	Note that the condition \eqref{cond regular} is a stronger condition than the regularity of $X_{\bm{0}}$. Moreover, as we have seen in Example \ref{Volterra} and Example \ref{Holder}, it is possible to give a stronger yet more tractable condition than \eqref{cond weak 1} which is stated below. Indeed, if  \[\sum_{\bm{k}\geq\bm{1}}\frac{\bigl(\log(|\bm{k}|)\bigr)^{\frac{d-1}{2}}}{|\bm{k}|^{\frac{1}{2}}}\sum
	_{\bm{j}\geq\bm{k-1}}\delta_{\bm{j}}<\infty\]
	then the conclusion of Example \ref{stable} is verified.
	\\
	\begin{proof}
		Let $\bm{k} \ge \bm{1}$ and consider a bijection $\tau : \mathbb{Z} \to \Z^d$ such that for all $n \in \Z$, we have 
		\[\bigl(n \ge 1\; \Longleftrightarrow\; \tau(n) \in \Z^-_{\bm{1}}\bigr) \quad \textrm{and} \quad \bigl(
		\forall k \in \mathbb{N}, \{\bm{i} \in \Z^d : 1 \le \tau^{-1}(\bm{i}) \le k^d\} = \llbracket2-k,1\rrbracket^d\bigr)
		\]
		where $\Z^-_{\bm{1}} = \{\bm{i} \in \Z^d : \bm{i} \le \bm{1}\}$. Since $X_{\bm{0}}$ is centered and satisfies \eqref{cond regular}, we find that 
		\begin{equation}
			\E_{\bm{1}}[X_{\bm{k}}] = \E_{\bm{1}}[X_{\bm{k}}] - \E_{\bm{1}}\bigl[G\bigl(\xi'_{\bm{k} - \bm{s}};{\bm{s} \ge \bm{0}}\bigr)\bigr] =  \sum_{n \ge 0}\E_{\bm{1}}[Y_{\tau(n)} - Y_{\tau(n+1)}],
		\end{equation}
		where 
		\[Y_{\tau(n)} = G\bigl(\zeta^{n}_{\bm{k} - \bm{s}};{\bm{s} \ge \bm{0}}\bigr), \quad \textrm{with}\quad \zeta^{n}_{\bm{i}} = \biggl\{\begin{array}{l r}
			\xi_{\bm{i}} & \textrm{if $\tau^{-1}(\bm{i}) > n$}\\
			\xi'_{\bm{i}} & \textrm{otherwise}.
		\end{array}
		\]
		Remark that, according to stationarity, for any $n \ge 0$
		\begin{align*}
			{\|\E_{\bm{1}}[Y_{\tau(n)} - Y_{\tau(n+1)}]\|}_{\Phi_d} & \le {\bigl\|G\bigl(\zeta^{n}_{\bm{k} - \bm{s}};{\bm{s} \ge \bm{0}}\bigr) - G\bigl(\zeta^{n+1}_{\bm{k} - \bm{s}};{\bm{s} \ge \bm{0}}\bigr)\bigr\|}_{\Phi_d}\\
			& = {\bigl\|G\bigl(\zeta^{n}_{\bm{k} - \bm{s}};{\bm{s} \ge \bm{0}}\bigr) - G\bigl(\zeta^{n}_{\bm{k} - \bm{s}}, \xi'_{\tau(n+1)};{\bm{s} \ge \bm{0}, \bm{s} \not = \bm{k} - \tau(n+1)} \bigr)\bigr\|}_{\Phi_d}\\
			& = {\bigl\|G\bigl(\xi_{\bm{k}- \tau(n+1) - \bm{s}};{\bm{s} \ge \bm{0}}\bigr) - G\bigl(\xi_{\bm{k}- \tau(n+1) - \bm{s}}, \xi'_{\bm{0}};{\bm{s} \ge \bm{0}, \bm{s} \not = \bm{k} - \tau(n+1)} \bigr)\bigr\|}_{\Phi_d}\\
			& = \delta_{\bm{k}- \tau(n+1)}.
		\end{align*}
		Therefore, using the triangular inequality, we get
		\[
		{\|\E_{\bm{1}}[X_{\bm{k}}]\|}_{\Phi_d} \le \sum_{n \ge 0}\delta_{\bm{k} - \tau(n+1)} = \sum_{\bm{j}\ge \bm{k} - \bm{1}}\delta_{\bm{j}}.
		\]
		Using condition \eqref{cond weak 1} we find that the conclusion of Corollary \ref{Cor_FCLT_d} is satisfied.
	\end{proof}
	By considering a stronger notion of stability, we can relax the hypothesis \eqref{cond weak 1} to \eqref{cond weak 2} as well as condition \eqref{cond regular} to the simple regularity of $X_{\bm{0}}$. In fact, we will say that a random field $(X_{\bm{i}})_{\bm{i}\in\Z^d}$ is strongly stable whenever 
	\[
	\sum_{\bm{i} \ge \bm{0}}\widetilde{\delta}_{\bm{i}} < \infty
	\]
	with
	\[
	\widetilde{\delta_{\bm{i}}} = {\|X_{\bm{i}} - \widetilde{X}_{\bm{i}}\|}_{\Phi_d}.
	\]
	Under this stronger assumption, we can show that Hannan's condition \eqref{hannan} holds. Then there only remains to satisfy \eqref{higher moment 2} for Corollary \ref{main result no centering} to apply.	
	\begin{example}
		\label{strong stable}
		Suppose that the field $(X_{\bm{i}})_{\bm{i}\in\Z^d}$ is strongly stable and that $X_{\bm{0}}$ is regular, then condition \eqref{hannan} is also satisfied. If, in addition, we also assume that
		\begin{equation}
			\label{cond weak 2}
			\sum_{\bm{k} \ge \bm{1}}\frac{1}{\sqrt{|\bm{k}|}}\sum_{j \ge 0}\widetilde{\delta}_{\bm{k}+(j-1)\bm{e}_1} < \infty,
		\end{equation}
		then the conclusion of Corollary \ref{Cor_FCLT_d} holds.
	\end{example}
	\begin{proof}
		Let $\bm{k} \ge \bm{0}$, then we have the following bound
		\begin{align*}
			{\|\mathcal{P}_{\bm{0}}(X_{\bm{k}})\|}_{\Phi_d} & = {\Biggl\|\prod_{i=1}^{d}{(\E_{\bm{0}} - \E_{-\bm{e}_i})[X_{\bm{k}}]}\Biggr\|}_{\Phi_d}\\
			& \le 2^{d-1}{\|\E_{\bm{0}}[X_{\bm{k}}] - \E_{-\bm{e}_1}[X_{\bm{k}}]\|}_{\Phi_d}\\
			& \le 2^{d-1}\widetilde{\delta}_{\bm{k}}.
		\end{align*}
		Hence \eqref{hannan} is satisfied. Now suppose that \eqref{cond weak 2} holds and $\bm{k} \ge \bm{1}$. Since $G(\xi'_{\bm{k} - \bm{s}}; \bm{s} \ge \bm{0})$ is a centered random variable, we have
		\begin{equation}
			\E_{\bm{1}}[X_{\bm{k}}] = \E_{\bm{1}}[X_{\bm{k}}] - \E_{\bm{1}}\bigl[G\bigl(\xi'_{\bm{k} - \bm{s}};{\bm{s} \ge \bm{0}}\bigr)\bigr] =  \sum_{j \ge 0}\E_{\bm{1}}[Y_j- Y_{j+1}],
		\end{equation}
		where
		\[
		Y_j = G\bigl(\zeta^{j}_{\bm{k} - \bm{s}};{\bm{s} \ge \bm{0}}\bigr), \quad \textrm{with}\quad \zeta^j_{\bm{i}} = \biggl\{\begin{array}{l r}
			\xi'_{\bm{i}} & \textrm{if $1-j < i_1 \le 1$ and $\bm{i \le \bm{1}}$}\\
			\xi_{\bm{i}} & \textrm{otherwise}.
		\end{array}
		\]
		However, using a similar argument as before, we have that 
		\[
		{\|\E_{\bm{1}}[Y_j- Y_{j+1}]\|}_2 \le \widetilde{\delta}_{\bm{k}+(j-1)\bm{e}_1}.
		\]
		Therefore, using the triangle inequality, we get
		\[
		{\|\E_{\bm{1}}[X_{\bm{k}}]\|}_2 \le \sum_{j \ge 0}\widetilde{\delta}_{\bm{k}+(j-1)\bm{e}_1}.
		\]
		And so, using condition \eqref{cond weak 2}, the conclusion of Corollary \ref{main result no centering} holds for the stochastic process $(X_{\bm{k}})_{\bm{k} \in \Z^d}$.
	\end{proof}
	\section{Appendix}
	\label{sec:appendix}
	In this section, we give the proof of Lemma \ref{rosenthal_ineq_orlicz}. We will follow the outline of the proof given by \cite{MR0365692} for the Rosenthal inequality in Lebesgue spaces but first, we need to establish a preliminary lemma concerning the Orlicz norm studied in this document. We start by recalling the definition of the different tools we will require. 
	\\
	\\Recall that the Luxemburg norm associated with the Young function $\Phi_d : x \in [0,\infty) \mapsto \Phi_d(x)=x^{2}\bigl(\log(1+x)\bigr)^{d-1} \in [0,\infty)$ is defined as
	\[
	{\|f\|}_{\Phi} = \inf \Bigl\{t > 0 : \mathbb{E}\bigl[\Phi(|f|/t)\bigr] \le 1\Bigr\},
	\]
	and by $\Psi_d$ we denote the conjugate function associated with $\Phi_d$ defined in the following way
	\[
	\Psi_d(x)=\sup_{y\geq 0}\bigl(xy-\Phi_d(y)\bigr).
	\]
	Besides properties \eqref{prop1 varphi} and \eqref{prop2 varphi}, the natural logarithm also satisfies
	\begin{equation}
		\label{min_phi}
		\log\Bigl(1+\frac{x}{\lambda}\Bigr)\log(1+\lambda)\ge \log(2)\lambda\log(1+x),
	\end{equation}
	for all $0 <\lambda \le 1$ and $x \ge 0$ as well as
	\begin{equation}
		\label{min_phi_2}
		\log\Bigl(1+\frac{x}{\lambda}\Bigr)\log(1+\lambda)\ge \log(2)\frac{\log(1+x)}{\lambda},
	\end{equation} for all $\lambda \ge 1$ and $x \ge 0$. 
	The following lemma will help us compute the Orlicz norm associated with $\Psi_d$ of a specific random variable which will appear in the proof of Lemma \ref{rosenthal_ineq_orlicz}.
	\begin{lemma}
		\label{ineq_norm_spec}
		Suppose that $h \in L^2\log L$ takes nonnegative values. If ${\|h\|}_{\Phi_d} \le 1$, then
		\[
		{\Bigl\|h\bigl(\log(1+h)\bigr)^{d-1}\Bigr\|}_{\Psi_d} \le {\|h\|}_{\Phi_d}.
		\]
		If ${\|h\|}_{\Phi_d} > 1$ then for all $\epsilon \in (0,1)$, there exist a positive constant $C_{d,\epsilon}$ depending only on $d$ and $\epsilon$ such that
		\begin{equation}
			\label{ineq_norm_spec_2}
			{\Bigl\|h\bigl(\log(1+h)\bigr)^{d-1}\Bigr\|}_{\Psi_d} \le C_{d,\epsilon}{\|h\|}_{\Phi_d}^{1+\epsilon}.
		\end{equation}
	\end{lemma}
	Before moving on with the proof of Lemma \ref{ineq_norm_spec}, we explicit another useful property of the natural logarithm. For all $x \ge \lambda \ge 1$,
	\begin{equation}
		\label{ineq_log_2}
		\log\Bigl(1+\frac{x}{\lambda}\Bigr)\log(1+\lambda)\ge \log(2)\log(1+x).
	\end{equation}
	\begin{proof}[Proof of Lemma~\ref{ineq_norm_spec}]
		Let $h \in L^2\log L$ be a nonnegative function such that ${\|h\|}_{\Phi_d} \le 1$ and let $t \in (0,1]$. Using the inequality $\Psi\bigl(x\bigl(\log(1+x)\bigr)^{d-1}/t\bigr) \le \Phi(x/t)$ for all $x \ge 0$, we get
		\[
		\mathbb{E}\biggl[\Psi_d\biggl(\frac{h\log^{d-1}(1+h)}{t}\biggr)\biggr] \le \mathbb{E}\biggl[\Phi_d\biggl(\frac{h}{t}\biggr)\biggr].
		\]
		Taking $t = {\|h\|}_{\Phi_d} \le 1$, we obtain
		\[
		{\Bigl\|h\bigl(\log(1+h)\bigr)^{d-1}\Bigr\|}_{\Psi_d} \le {\|h\|}_{\Phi_d}.
		\]
		We now turn to the proof of the second part of Lemma \ref{ineq_norm_spec} and we begin by noticing that if $ {\|h\|}_{\Phi_d} = \infty$, then \eqref{ineq_norm_spec_2} is trivially satisfied. From now on, we will therefore assume that $1 < {\|h\|}_{\Phi_d} < \infty$. We start by recalling that if we let $\epsilon \in (0,1)$, then there exists a constant $c_{d,\epsilon} > 0$ such that
		\begin{equation}
			\label{ineq_log}
			\Bigl(\log(1+{\|h\|}_{\Phi_d})\Bigr)^{d-1} \le c_{d,\epsilon} {\|h\|}_{\Phi_d}^{\epsilon}.
		\end{equation}
		Now, according to the triangle inequality, we have
		\begin{equation*}
			{\Bigl\|h\bigl(\log(1+h)\bigr)^{d-1}\Bigr\|}_{\Psi_d} \le {\Bigl\|h\bigl(\log(1+h)\bigr)^{d-1} \textbf{1}_{h \le {\|h\|}_{\Phi_d}}\Bigr\|}_{\Psi_d}  + {\Bigl\|h\bigl(\log(1+h)\bigr)^{d-1} \textbf{1}_{h >{\|h\|}_{\Phi_d}}\Bigr\|}_{\Psi_d}.
		\end{equation*}
		To bound the first term in the right-hand side of this inequality, we make use of \eqref{ineq_log} and we obtain
		\begin{align}
			\label{ineq_term_1}
			{\Bigl\|h\bigl(\log(1+h)\bigr)^{d-1} \textbf{1}_{h \le {\|h\|}_{\Phi_d}}\Bigr\|}_{\Psi_d} & \le \Bigl(\log(1+{\|h\|}_{\Phi_d})\Bigr)^{d-1} {\|h\|}_{\Phi_d}\nonumber\\
			& \le c_{d,\epsilon} {\|h\|}_{\Phi_d}^{1+\epsilon}.
		\end{align}
		Dealing with the second term, we combine \eqref{ineq_log_2}, \eqref{ineq_log} and the inequality $\Psi_d(\varphi_d(x)) \le \Phi_d(x)$ where $\varphi_d(x) = x\bigl(\log(1+x)\bigr)^{d-1}$ for all $x \ge 0$, in order to get
		\begin{align*}
			\mathbb{E}\Biggl[\Psi_d\Biggl(\frac{h\bigl(\log(1+h)\bigr)^{d-1}}{\log(2)^{1-d}c_{d,\epsilon}{\|h\|}_{\Phi_d}^{1+\epsilon}}\Biggr) \textbf{1}_{h > {\|h\|}_{\Phi_d}}\Biggr] & \le \mathbb{E}\Biggl[\Psi_d\Biggl(\frac{h\Bigl(\log(2)^{-1}\log\Bigl(1+\frac{h}{{\|h\|}_{\Phi_d}}\Bigr)\log(1+{\|h\|}_{\Phi_d})\Bigr)^{d-1}}{\log(2)^{1-d}c_{d,\epsilon}{\|h\|}_{\Phi_d}^{1+\epsilon}}\Biggr) \Biggr]\\
			& \le \mathbb{E}\Biggl[\Phi_d\Biggl(\frac{h}{{\|h\|}_{\Phi_d}}\Biggr) \Biggr]\\
			& = 1.
		\end{align*}
		Thus
		\begin{equation}
			\label{ineq_term_2}
			{\Bigl\|h\bigl(\log(1+h)\bigr)^{d-1} \textbf{1}_{h >{\|h\|}_{\Phi_d}}\Bigr\|}_{\Psi_d} \le \log(2)^{1-d}c_{d,\epsilon}{\|h\|}_{\Phi_d}^{1+\epsilon}.
		\end{equation}
		Therefore, combining \eqref{ineq_term_1} and \eqref{ineq_term_2} we get the desired result.
	\end{proof}
	\par We can now prove Lemma \ref{rosenthal_ineq_orlicz}. In order to do so, we will make use of Lemma 3.1 in \cite{MR0365692}.
	\\
	\\
	\begin{proof}[Proof of Lemma \ref{rosenthal_ineq_orlicz}]
		We start by introducing a few items of notation.	For all $\bm{n} \in \mathbb{N}^d$, we denote $M_{\bm{n}} = \sum_{\bm{u} = \bm{0}}^{\bm{n} - \bm{1}}d_{\bm{u}}$ and $\sigma_{\bm{n}} = \sqrt{\sum_{\bm{u} = \bm{0}}^{\bm{n} - \bm{1}}d_{\bm{u}}^2}$. Our proof will be split into two parts. In the first part, we will make the additional assumption that the ortho-martingale  $(M_{\bm{n}})_{\bm{n}}$ is nonnegative. Then, in the second part, we will establish the result for real-valued ortho-martingales.
		\\
		\\
		\underline{First step:} 
		We suppose that $(M_{\bm{n}})_{\bm{n} \in (\mathbb{N}^*)^d}$ is a nonnegative ortho-martingale. Let $\bm{n} \in \mathbb{N}^d$ be fixed and remark that, since $\Phi_d(\sqrt{a+b}) \ge \Phi_d(\sqrt{a}) + \Phi_d(\sqrt{b})$ for all $a,b \ge 0$, it holds that
		\begin{equation}
			\label{control_sum_square}
			\mathbb{E}\biggl[\Phi_d\biggl(\frac{\sigma_{\bm{n}}}{\eta}\biggr)\biggr] \ge \sum_{\bm{u} = \bm{0}}^{\bm{n} - \bm{1}}\mathbb{E}\biggl[\Phi_d\biggl(\frac{d_{\bm{u}}}{\eta}\biggr)\biggr].
		\end{equation}
		for any $\eta > 0$.	Let $X = \max(\sigma_{\bm{n}}, \max _{\bm{0} \le \bm{u} \le \bm{n}}M_{\bm{u}})$ and suppose that ${\|X\|}_{\Phi_d} \le 1$. Applying \eqref{control_sum_square} with $\eta = 1$, we get
		\begin{equation}
			\label{def_eta}
			{\|\sigma_{\bm{n}}\|}_{\Phi_d} \le {\|X\|}_{\Phi_d} \le 1 \quad \textrm{and} \quad \eta _0 := \sum_{\bm{u} = \bm{0}}^{\bm{n} - \bm{1}}\mathbb{E}[\Phi_d(d_{\bm{u}})] \le 1.
		\end{equation}
		Setting $\eta_0' = \eta_0\log(2)^{d-1}$, we find that
		\begin{align*}
			\mathbb{E}\biggl[\Phi_d\biggl(\frac{\sigma_{\bm{n}}}{\eta_0'}\biggr)\biggr] & = \frac{1}{{\eta_0'}^2}\mathbb{E}\Biggl[\sigma_{\bm{n}}^2\biggl(\log\biggl(1 + \frac{\sigma_{\bm{n}}}{\eta_0'}\biggr)\biggr)^{d-1}\Biggr]\\
			& \ge \frac{1}{\eta_0}\mathbb{E}[\Phi_d(\sigma_{\bm{n}})]\\
			& \ge 1.
		\end{align*}
		The second to last inequality holds since according to \eqref{min_phi}, we have
		\begin{align*}
			\biggl(\log\biggl(1 + \frac{\sigma_{\bm{n}}}{\eta_0'}\biggr)\biggr)^{d-1} & \ge {\eta_0'}^{d-1}\log(2)^{d-1}\frac{\bigl(\log(1 + \sigma_{\bm{n}})\bigr)^{d-1}}{\bigl(\log(1 + \eta_0')\bigr)^{d-1}}\\
			& \ge {\eta_0'}^2\log(2)^{d-1}\frac{\bigl(\log(1 + \sigma_{\bm{n}})\bigr)^{d-1}}{\eta_0'}\\
			& = \frac{{\eta_0'}^2}{\eta_0}\bigl(\log(1 + \sigma_{\bm{n}})\bigr)^{d-1}.
		\end{align*}
		From the previous inequality, we deduce that
		\begin{equation}
			\label{ineq_left}
			\log(2)^{1-d}{\|\sigma_{\bm{n}}\|}_{\Phi_d} \ge \eta_0 = \sum_{\bm{u} = \bm{0}}^{\bm{n} - \bm{1}}\mathbb{E}[\Phi_d(d_{\bm{u}})].
		\end{equation}
		Using Lemma 3.1 in \cite{MR0365692}, for any $\lambda > 0$
		\[
		\lambda \mathbb{P}\bigl(X > \sqrt{3} \lambda\bigr) \le 3 \int_{\{X > \lambda\}}M_{\bm{n}}\mathrm{d}\mathbb{P}.
		\]
		Therefore
		\begin{equation*}
			\mathbb{E}[\Phi_d(X)] = \int_0^{+\infty}\Phi_d'(u)\mathbb{P}(X>u)\mathrm{d}u \le 3\sqrt{3}\int _{\Omega}M_{\bm{n}}\int_0^{\sqrt{3} X}\frac{\Phi_d'(u)}{u}\mathrm{d}u\mathrm{d}\mathbb{P}.
		\end{equation*}
		Computing $\int_0^{\sqrt{3} X}\frac{\Phi_d'(u)}{u}\mathrm{d}u$, we find that $\int_0^{\sqrt{3} X}\frac{\Phi_d'(u)}{u}\mathrm{d}u \le  3\sqrt{3} X \bigl(\log(1+\sqrt{3} X)\bigr)^{d-1}$. Thus
		\begin{equation}
			\label{ineq_esp}
			\mathbb{E}[\Phi_d(X)] \le 3^{\frac{d+5}{2}} \int _{\Omega}M_{\bm{n}}X\bigl(\log(1+ X)\bigr)^{d-1}\mathrm{d}\mathbb{P}.
		\end{equation}
		Applying Holder's inequality for Orlicz spaces, we get
		\[
		\int _{\Omega}M_{\bm{n}} X\bigl(\log(1+ X)\bigr)^{d-1}\mathrm{d}\mathbb{P} \le 2{\|M_{\bm{n}}\|}_{\Phi_d}{\Bigl\|X\bigl(\log(1+X)\bigr)^{d-1}\Bigr\|}_{\Psi_d}.
		\]
		Using Lemma \ref{ineq_norm_spec}, we find that
		\[
		{\Bigl\|X\bigl(\log(1+ X)\bigr)^{d-1}\Bigr\|}_{\Psi_d} \le  {\|X\|}_{\Phi_d}.
		\]
		Then
		\[
		\int _{\Omega}M_{\bm{n}} X\bigl(\log(1+X)\bigr)^{d-1}\mathrm{d}\mathbb{P} \le 2{\|M_{\bm{n}}\|}_{\Phi_d}{\|X\|}_{\Phi_d}.
		\]
		Recalling \eqref{ineq_esp}, we deduce that
		\[
		\mathbb{E}[\Phi_d(X)] \le 2\cdot3^\frac{d+5}{2}{\|M_{\bm{n}}\|}_{\Phi_d}{\|X\|}_{\Phi_d}.
		\]
		Thus, recalling that ${\|X\|}_{\Phi_d} \le 1$ and applying Lemma \ref{Lem tool}, we obtain
		\begin{equation}
			\label{ineq_right}
			\varphi_d\bigl({\|X\|}_{\Phi_d}\bigr) \le 2\cdot3^\frac{d+5}{2}{\|M_{\bm{n}}\|}_{\Phi_d},
		\end{equation}
		with $\varphi_d(x) = x\bigl(\log(1+x)\bigr)^{d-1}$ for all $x \ge 0$.
		Keeping in mind the inequalities \eqref{def_eta}, \eqref{ineq_left} and \eqref{ineq_right}, we obtain
		\begin{equation}
			\label{pos_part}
			\sum_{\bm{u} = \bm{0}}^{\bm{n} - \bm{1}}\mathbb{E}[\Phi_d(d_{\bm{u}})] \le \log(2)^{1-d}\varphi_d^{-1}\Bigl(2\cdot 3^\frac{d+5}{2}{\|M_{\bm{n}}\|}_{\Phi_d}\Bigr).
		\end{equation}
		Now, suppose that ${\|X\|}_{\Phi_d} > 1$. According to \eqref{control_sum_square}, we have
		\[
		\sum_{\bm{u} = \bm{0}}^{\bm{n} - \bm{1}}\mathbb{E}\Biggl[\Phi_d\Biggl(\frac{d_{\bm{u}}}{{\|X\|}_{\Phi_d}}\Biggr)\Biggr] \le 1.
		\]
		For any $\bm{0} \le \bm{u} \le \bm{n} - \bm{1}$ and by making use of inequality \eqref{min_phi_2}, it holds that
		\begin{align*}
			\mathbb{E}\Biggl[\Phi_d\Biggl(\frac{d_{\bm{u}}}{{\|X\|}_{\Phi_d}}\Biggr)\Biggr] & = \mathbb{E}\Biggl[\frac{d_{\bm{u}}^2}{{\|X\|}_{\Phi_d}^2}\Biggl(\log\Biggl(1+\frac{d_{\bm{u}}}{{\|X\|}_{\Phi_d}}\Biggr)\Biggr)^{d-1}\Biggr]\\
			& \ge \mathbb{E}\Biggl[\frac{\log(2)^{d-1}\Phi_d(d_{\bm{u}})}{{\|X\|}_{\Phi_d}^{d+1}\Bigl(\log\bigl(1+{\|X\|}_{\Phi_d}\bigr)\Bigr)^{d-1}}\Biggr].
		\end{align*}
		We conclude that
		\begin{equation}
			\label{ineq_left_2}
			\sum_{\bm{u} = \bm{0}}^{\bm{n} - \bm{1}}\mathbb{E}[\Phi_d(d_{\bm{u}})] \le \log(2)^{1-d}{\|X\|}_{\Phi_d}^{d+1}\Bigl(\log\bigl(1+{\|X\|}_{\Phi_d}\bigr)\Bigr)^{d-1} =: \log(2)^{1-d}\phi_d\bigl({\|X\|}_{\Phi_d}\bigr).
		\end{equation}
		where $\phi_d(x) = x^{d+1}\bigl(\log(1+x)\bigr)^{d-1}$, for all $x \ge 0$. Once again, by the same argument as in the first case, we get
		\[
		\mathbb{E}[\Phi_d(X)] \le 2\cdot3^\frac{d+5}{2}{\|M_{\bm{n}}\|}_{\Phi_d}{\Bigl\|X\bigr(\log(1+X)\bigr)^{d-1}\Bigr\|}_{\Psi_d}.
		\]
		However using Lemma \ref{ineq_norm_spec}, there exists $C_{d,\epsilon} > 0$ such that
		\[
		{\Bigl\|X\bigl(\log(1+X)\bigr)^{d-1}\Bigr\|}_{\Psi_d} \le C_{d,\epsilon}{\|X\|}_{\Phi_d}^{1+\epsilon},
		\]
		Since $\log$ is an increasing function and ${\|X\|}_{\Phi_d} > 1$, we deduce that 
		\[
		1  = \mathbb{E}\Biggl[\Phi_d\Biggl(\frac{X}{{\|X\|}_{\Phi_d}}\Biggr)\Biggr]  \le \frac{\mathbb{E}[\Phi_d(X)]}{{\|X\|}_{\Phi_d}^2}
		\]
		and so
		\[
		{\|X\|}_{\Phi_d}^2 \le \mathbb{E}[\Phi_d(X)] \le 2\cdot3^\frac{d+5}{2}C_{d,\epsilon}{\|M_{\bm{n}}\|}_{\Phi_d}{\|X\|}_{\Phi_d}^{1+\epsilon}.
		\]
		Thus
		\begin{equation}
			\label{ineq_right_2}
			{\|X\|}_{\Phi_d}^{1-\epsilon} \le 2\cdot3^\frac{d+5}{2}C_{d,\epsilon}{\|M_{\bm{n}}\|}_{\Phi_d}.
		\end{equation}
		Combining \eqref{ineq_left_2} and \eqref{ineq_right_2}, we get the following inequality
		\begin{equation}
			\label{pos_part_2}
			\sum_{\bm{u} = \bm{0}}^{\bm{n} - \bm{1}}\mathbb{E}[\Phi_d(d_{\bm{u}})] \le \log(2)^{1-d}\phi_d\circ f_{\epsilon}\Bigl(2\cdot3^\frac{d+5}{2}C_{d,\epsilon}{\|M_{\bm{n}}\|}_{\Phi_d}\Bigr).
		\end{equation}
		where $f_{\epsilon}(x) = x^{\frac{1}{1-\epsilon}}$ for all $x \ge 0$. Finally, recalling \eqref{pos_part} and \eqref{pos_part_2}, there exists $C_1,C_2 > 1$ only depending on $d$ such that
		\[
		\sum_{\bm{u} = \bm{0}}^{\bm{n} - \bm{1}}\mathbb{E}[\Phi_d(d_{\bm{u}})] \le  C_1\max\Bigl\{\varphi_d^{-1}\Bigl(C_2{\|M_{\bm{n}}\|}_{\Phi_d}\Bigr), \phi_d\circ f_{\epsilon}\Bigl(C_2{\|M_{\bm{n}}\|}_{\Phi_d}\Bigr)\Bigr\}.
		\]
		\underline{Second step:} Now suppose that $M$ can take negative values. We let 
		\[
		M_{\bm{u}}^+ = \mathbb{E}\big[\max(M_{\bm{n}}, 0)|\mathcal{G}_{\bm{u}}\big] \quad \textrm{and} \quad M_{\bm{u}}^- = \mathbb{E}\big[\max(-M_{\bm{n}}, 0)|\mathcal{G}_{\bm{u}}\big],
		\]
		with $\bm{0} \le \bm{u} \le \bm{n}$ and $ \mathcal{G}_{\bm{u}} = \sigma\bigl(M_{\bm{v}}, \bm{v} \le \bm{u}\bigr)$.	Both $M_{\bm{u}}^+$ and $M_{\bm{u}}^-$ are ortho-martingales and satisfy the conditions of the first part. Let $\bm{n} \in (\mathbb{N}^*)^d$, we define
		\[
		M_{\bm{n}}^+ := \sum_{\bm{u} = \bm{0}}^{\bm{n} - \bm{1}}d^+_{\bm{u}}, \quad M_{\bm{n}}^- := \sum_{\bm{u} = \bm{0}}^{\bm{n} - \bm{1}}d^-_{\bm{u}}, \quad \sigma_{\bm{n}}^+ = \sqrt{\sum_{\bm{u} = \bm{0}}^{\bm{n} - \bm{1}}(d_{\bm{u}}^+)^2}\quad \textrm{ and } \quad \sigma_{\bm{n}}^- = \sqrt{\sum_{\bm{u} = \bm{0}}^{\bm{n} - \bm{1}}(d_{\bm{u}}^-)^2}.
		\]
		Therefore there exists $C_1,C_2 > 1$ only depending on $d$ such that
		\begin{equation*}
			\sum_{\bm{u} = \bm{0}}^{\bm{n} - \bm{1}}\mathbb{E}[\Phi_d(d_{\bm{u}}^+)] \le  C_1\max\Bigl\{\varphi_d^{-1}\Bigl(C_2{\|M_{\bm{n}}^+\|}_{\Phi_d}\Bigr), \phi_d\circ f_{\epsilon}\Bigl(C_2{\|M_{\bm{n}}^+\|}_{\Phi_d}\Bigr)\Bigr\}
		\end{equation*}
		and
		\begin{equation*}
			\sum_{\bm{u} = \bm{0}}^{\bm{n} - \bm{1}}\mathbb{E}[\Phi_d(d_{\bm{u}}^-)] \le  C_1\max\Bigl\{\varphi_d^{-1}\Bigl(C_2{\|M_{\bm{n}}^-\|}_{\Phi_d}\Bigr), \phi_d\circ f_{\epsilon}\Bigl(C_2{\|M_{\bm{n}}^-\|}_{\Phi_d}\Bigr)\Bigr\}.
		\end{equation*}
		Using the inequalities $\Phi_d(a+b) \le 2^{d+1}(\Phi_d(a) + \Phi_d(b))$ for all $a,b \ge 0$, we obtain
		\begin{align*}
			\sum_{\bm{u} = \bm{0}}^{\bm{n} - \bm{1}}\mathbb{E}[\Phi_d(|d_{\bm{u}}|)] & \le
			2^{d+1}\sum_{\bm{u} = \bm{0}}^{\bm{n} - \bm{1}}\mathbb{E}[\Phi_d(d_{\bm{u}}^+)] + 2^{d+1}\sum_{\bm{u} = \bm{0}}^{\bm{n} - \bm{1}}\mathbb{E}[\Phi_d(d_{\bm{u}}^-)]\\
			& \le  2^{d+1}\Bigl( C_1\max\Bigr\{\varphi_d^{-1}\Bigl(C_2{\|M_{\bm{n}}^+\|}_{\Phi_d}\Bigr), \phi_d\circ f_{\epsilon}\Bigl(C_2{\|M_{\bm{n}}^+\|}_{\Phi_d}\Bigr)\Bigr\}\\
			& \qquad \qquad + C_1\max\Bigr\{\varphi_d^{-1}\Bigl(C_2{\|M_{\bm{n}}^-\|}_{\Phi_d}\Bigr), \phi_d\circ f_{\epsilon}\Bigl(C_2{\|M_{\bm{n}}^-\|}_{\Phi_d}\Bigr)\Bigr\}\Bigr)\\
			& \le 2^{d+2}C_1\max\Bigl\{\varphi_d^{-1}\Bigl(C_2{\|M_{\bm{n}}\|}_{\Phi_d}\Bigr), \phi_d\circ f_{\epsilon}\Bigl(C_2{\|M_{\bm{n}}\|}_{\Phi_d}\Bigr)\Bigr\}.
		\end{align*}
		The proof of the theorem is then complete.
	\end{proof}
	
	\section*{Acknowledgements}
	The authors would like to thank Dalibor Voln\'{y} and Christophe Cuny as well as the unknown referee for their helpful remarks during the writing of this article.
	
	\bibliographystyle{plain}
	\bibliography{FCLT_Quenched_Reding_Zhang2}

\end{document}